\newtheorem{Th}{Theorem}[section]
\newtheorem{Prop}[Th]{Proposition}
\newtheorem{Lem}[Th]{Lemma}
\newtheorem{Def}[Th]{Definition}
\numberwithin{equation}{section}
\newcommand{\N}{\mathbb{N}}
\newcommand{\R}{\mathbb{R}}
\newcommand{\Rn}{\mathbb{R}^{n+1}}
\newcommand{\Z}{\mathbb{Z}}
\newcommand{\G}{\Gamma}
\newcommand{\HH}{\mathcal{H}}
\newcommand{\LL}{\mathcal{L}}
\newcommand{\M}{\mathcal{M}}
\newcommand{\Q}{\mathcal{Q}}
\newcommand{\eps}{\varepsilon}
\DeclareMathOperator{\supp}{supp}
\newcommand{\dv}{\operatorname{div}}
\def\mean#1{\mathchoice%
          {\mathop{\kern 0.2em\vrule width 0.6em height 0.69678ex depth -0.58065ex
                  \kern -0.8em \intop}\nolimits_{\kern -0.4em#1}}%
          {\mathop{\kern 0.1em\vrule width 0.5em height 0.69678ex depth -0.60387ex
                  \kern -0.6em \intop}\nolimits_{#1}}%
          {\mathop{\kern 0.1em\vrule width 0.5em height 0.69678ex
              depth -0.60387ex
                  \kern -0.6em \intop}\nolimits_{#1}}%
          {\mathop{\kern 0.1em\vrule width 0.5em height 0.69678ex depth -0.60387ex
                  \kern -0.6em \intop}\nolimits_{#1}}}
\title[Homogenization of a parabolic Dirichlet problem by a method of Dahlberg]{Homogenization of a parabolic Dirichlet problem by a method of Dahlberg}
\author[A. J. Castro]{Alejandro J. Castro}
\author[M. Str\"omqvist]{Martin Str\"omqvist}
\address{\newline
        Alejandro J. Castro \newline
        Department of  Mathematics, Nazarbayev University, \newline
		010000 Astana, Kazakhstan}
\email{alejandro.castilla@nu.edu.kz}
\address{\newline
        Martin Str\"omqvist \newline
        Department of  Mathematics, Uppsala University, \newline
        S-751 06 Uppsala, Sweden}
\email{martin.stromqvist@math.uu.se}
\keywords{Second order parabolic operator, Dirichlet problem, Homogenization}
\subjclass[2010]{35K20, 35B27}
\begin{document}

\footnotetext{Last modification: \today.}

\begin{abstract}
Consider the linear parabolic operator in divergence form 
$$\HH u =\partial_t u(X,t)-\text{div}(A(X)\nabla u(X,t)).$$ 
We employ a method of Dahlberg to show that the Dirichlet problem for $\HH$ in the upper half plane is well-posed for boundary data in $L^p$, for any elliptic matrix of coefficients $A$ which is periodic and satisfies a Dini-type condition. This result allows us to treat a homogenization problem for the equation $\partial_t u_\eps(X,t)-\text{div}(A(X/\eps)\nabla u_\eps(X,t))$ in Lipschitz domains with $L^p$-boundary data.
\end{abstract}

\maketitle


\section{Introduction, notation and main results}


In this paper we are interested in the well-posedness of low regularity Dirichlet problems associated with the divergence type parabolic operator
$$\HH u=\partial_t u - \dv \Big( A(X,t) \cdot \nabla u \Big),$$
for a certain periodic matrix of coefficients $A$. That is, we would like to guarantee existence and uniqueness of solutions and continuous dependence on the boundary data, under minimal regularity assumptions on the coefficients and on the domain. For the upper half space $$\{(x,t,\lambda):x\in\R^n,\;t\in \R,\;\lambda>0\},$$ 
we prove that the $L^p$ Dirichlet problem is well-posed if $A$ is periodic in the $\lambda$-direction. This extends previous results for the upper half space, where it is assumed that $A$ is either independent of $\lambda$, or that $A$ is a perturbation of a matrix that is independent of $\lambda$. The theory developed for the upper half space allows us to study homogenization problems in bounded, time-independent Lipschitz domains. 

We start by briefly putting these problems into context, mentioning just a few papers that precede this work. 
For the ordinary heat equation, in which case the matrix $A$ is simply the identity matrix, Fabes and Rivi\`ere (\cite{FR}) established the solvability in $C^1$--cylinders. Later, Fabes and Salsa (\cite{FS}) and Brown (\cite{Br}) extended the result to Lipschitz cylinders. For more involved time-varying domains, the situation has been analyzed by Lewis and Murray (\cite{LewMur}) and Hofmann and Lewis (\cite{HofLew}).
The next step was to allow non-constant coefficients. Mitrea
(\cite{Mitr}) studied the situation of $A \in C^\infty$; Castro, Rodr\'iguez-L\'opez and Staubach (\cite{CRS}) considered H\"older matrices and Nystr\"om (\cite{N2}) the case of complex elliptic matrices, but independent of one of the spatial variables.

In all previous contexts, the matrices were time-independent. Allowing time-dependence is a very challenging problem, which has been understood very recently by Auscher, Moritz and Nystr\"om (\cite{AMN}), following a first order approach. They consider elliptic matrices depending on time and all spatial variables, which are certain perturbations of matrices independent of one single spatial direction (see \cite[Section 2.15]{AMN} for precise definitions).

It is also worth noting that in almost all the aforementioned
papers, the analysis was carried out via the so called method of layer potentials, that we will not follow this time here. We consider the parabolic Dirichlet problem in Lipschitz cylinders for merely elliptic coefficients, depending on all spatial variables. However, we need to assume periodicity in one direction and a Dini-type condition in the same variable, as made precise below. 

We show that if the coefficient matrix $A$ is time-independent and periodic with period $1$ in the spatial direction of the normal of the boundary, then the Dirichlet problem is solvable. Moreover, the estimates that we obtain for the solution are independent of the period of $A$. For periodic matrices $A(X)$ and $\eps>0$, we can then obtain estimates that are uniform in $\eps$ for the solution $u_\eps$ to the Dirichlet problem with coefficient matrix $A(x/\eps)$ with period $\eps$. In particular, we prove that, as $\eps\to0$, $u_\eps$ converges to a limit function $\bar u$ that solves the Dirichlet problem with a constant coefficient matrix $\bar A$. A limit process of this type is called homogenization. For elliptic operators, these estimates were obtained by Kenig and Shen in \cite{KS}. In \cite{KS}, the authors have two independent ways of proving the estimates. The first is an approximation argument that relies on certain integral identities, the second is through a potential theoretic method due to Dahlberg. For parabolic problems these integral estimates are not available and we rely instead on a parabolic version 
of the theorem by Dahlberg. 

Let $\HH$ denote the parabolic operator
$$\HH u := (\partial_t + \LL)u,$$
where
\begin{equation*}\label{operator}
\LL u := - \dv \Big( A(X,t) \cdot \nabla u \Big)
	 = - \sum_{i,j=1}^{n+1} \partial_{x_i} (A_{i,j}(X,t) \partial_{x_j}u),
\end{equation*}
is defined in $\R^{n+2}=\{ (X,t)=(x_1, \dots, x_{n+1},t) \in \R^{n+1} \times \R\}$, $n \geq 1$;
and $A=\{A_{i,j}(X,t)\}_{i,j=1}^{n+1}$ is an $(n+1)\times(n+1)$ real and symmetric matrix which satisfies:
\begin{itemize}

\item for certain $1 \leq \Lambda < \infty$, the \textit{uniform ellipticity} condition
\begin{equation}\label{eq:Aellip}
  \Lambda^{-1} |\xi|^2 \leq \sum_{i,j=1}^{n+1} A_{i,j}(X,t)\xi_i \xi_j \leq \Lambda |\xi|^2, \quad \xi \in \R^{n+1};
\end{equation}

\item independence of the time variable $t$, 
\begin{equation}\label{eq:Atindep}
	A(X,t)=A(X);
\end{equation}

\item periodicity in the $x_{n+1}$ variable
\begin{equation}\label{eq:Aperiod}
  A(x,x_{n+1} + 1)=A(x,x_{n+1}), \quad x \in \R^n, \ x_{n+1} \in \R;
\end{equation}

\item a Dini-type condition in the $x_{n+1}$ variable
\begin{equation}\label{eq:Dini}
  \int_0^1 \frac{\theta(\rho)^2}{\rho} d\rho < \infty,
\end{equation}
where $\theta(\rho):= \{|A(x,\lambda_1) - A(x,\lambda_2)| \ : \ x\in \R^n, \ |\lambda_1 - \lambda_2| \leq \rho \}$.
\end{itemize}

\quad\\
In virtue of the hypothesis \eqref{eq:Atindep} and \eqref{eq:Dini}, the $x_{n+1}$ direction is of special interest. 
Along this paper we call $\lambda:=x_{n+1}$. Accordingly, 
$\nabla
  :=(\nabla_{||},\partial_\lambda)
  :=(\partial_{x_1},\dots,\partial_{x_n},\partial_\lambda)$.
  Depending on the situation, we refer to a point in $\R^{n+2}$ either as $(X,t)$, $X=(x,\lambda)$, or $(x,t,\lambda)$, with an obvious abuse of notation. The latter is convenient when we consider the Dirichlet problem in the upper half space, where $(x,t)$ denotes a point on the boundary.
  

Our theorems are formulated in time-independent Lipschitz domains. By $D$ we denote the domain 
\begin{equation}\label{unbounded}
D := \{(x,t,\lambda)\in \R^n\times\R\times\R:\lambda>\phi(x)\}, 
\end{equation}
which is an unbounded cylinder in time, whose spatial base is the region above the Lipschitz graph $\phi$, i.e., $\phi$ satisfies
$$|\phi(x)-\phi(y)|\le m|x-y|, \quad x,y \in \R^n,$$
for certain $m>0$.
The (lateral) boundary of $D$ is given by
$$\partial D := \{(x,t,\phi(x))  : x \in \R^n, \ t \in \R \}.$$
We shall also consider bounded Lipschitz cylinders 
\begin{equation}\label{cyl}
\Omega_T:=\Omega\times(0,T), \text{ where }\Omega \text{ is a bounded Lipschitz domain in }\R^{n+1}.
\end{equation}
It will be assumed that $\Omega$ is a $(m,r_0)$ domain in the following sense: 
\[\left\{\begin{array}{l}
\text{For any }X_0\in \partial\Omega, \text{ there exists a Lipschitz  continuous function }\phi\text{ such that,}\\
\text{after a rotation of the coordinates, one has }X_0=(x_0,\lambda_0)\text{ and}\\
\{(x,\lambda):|x-x_0|<r_0,\;|\lambda-\lambda_0|<mr_0\}\cap\Omega = 
\{(x,\lambda):|x-x_0|<r_0,\;\phi(x)<\lambda<mr_0\}. 
\end{array}\right.\]
Thus, introducing 
\[U(x_0,t_0,\lambda_0) = \{(x,t,\lambda):|x-x_0|<r_0,\;|t-t_0|<r_0^2,\;|\lambda-\lambda_0|<mr_0\},\]
one has 
\begin{equation}\label{localbdry}
U(x_0,t_0,\lambda_0)\cap\Omega_T = \{(x,t,\lambda)\in\R^{n+2}:\phi(x)<\lambda\}\cap U(x_0,t_0,\lambda_0)\cap \{0<t<T\}.
\end{equation}
The lateral boundary of $\Omega_T$ is denoted by $\partial_L\Omega_T:=\partial\Omega\times(0,T)$ and the parabolic boundary is given by $\partial_P\Omega_T:=\overline\Omega\times\{0\}$. Note that for $D$ as in \eqref{unbounded}, $\partial_L D=\partial D$ and $\partial_P D=\emptyset$. On $\partial_L\Omega_T$ and $\partial D$ we define $L^p$ spaces with respect to the measure 
\begin{equation}\label{measure}
d\sigma(X,t)=d\sigma(X)dt,
\end{equation}
where $\sigma$ is the surface measure on $\partial\Omega$ and $\{(x,\phi(x)):x\in\R^n\}$, respectively. 



We shall need to introduce some more notation that will be needed to state our main results.
For $(X,t) \in \R^{n+1} \times \R$, we define its \textit{parabolic norm} $||(X,t)||$
as the unique positive solution $\rho$ of the
equation
$$\frac{t^2}{\rho^4} + \sum_{i=1}^{n+1} \frac{x_i^2}{\rho^2}=1.$$
It satisfies that $||(\gamma X,\gamma^2t)||=\gamma||(X,t)||$, $\gamma>0$. 
If $(x,t)\in\R^n\times\R$, we let $||(x,t)||=||(x,0,t)||$. 
We define the parabolic distance from $(X,t)\in \R^{n+2}$ to $(Y,s)\in\R^{n+2}$ by 
$d(X,t,Y,s)=||(X-Y,t-s)||$. 

Given $(x_0,t_0) \in \Rn$ and $\eta>0$, we define the \textit{cone}
$$\G^\eta(x_0,t_0)
  := \{(x,t,\lambda) \in \R_+^{n+2} : ||(x-x_0,t-t_0)|| < \eta \lambda\},$$
 and the \textit{standard parabolic cube} centered at $(x,t) \in \Rn$ with side length $\ell(Q)=r>0$ by
 $$Q:=Q_r(x,t)
   :=\{(y,s) \in \R^{n+1} : |y_i-x_i|<r, \ |t-s|<r^2\}.$$
Similarly, we consider parabolic cubes $\widetilde{Q}$ in $\R^{n+2}$ centered at $(X,t)$ as follows,
 $$\widetilde{Q}:=\widetilde{Q}_r(X,t)
   :=\{(Y,s) \in \R^{n+2} : |Y_i-X_i|<r, \ |t-s|<r^2\}.$$
It will also be useful to introduce the set
\[
T_r(x,t) := Q_{r}(x,t)\times(0,r).
\]

For any function $u$ defined in $\R^{n+2}_+:=\{(x,t,\lambda)\in \R^{n+2}:\lambda>0\},$  we consider the following \textit{non-tangential maximal operator}
\begin{equation*}\label{eq:N*}
N^{\eta}(u)(x_0,t_0)
  := \sup_{(x,t,\lambda) \in \G^\eta(x_0,t_0)} |u(x,t,\lambda)|. 
\end{equation*}
If $f(X,t)$ is defined on $\partial D$ and $(X_0,t_0)\in\partial D$, we say that $u(X_0,t_0)=f(X_0,t_0)$ non-tangentially (n.t.) if 
\[
\lim_{\substack{(Y,s)\in \Gamma^\eta(X_0,t_0)\\(Y,s)\to(X_0,t_0)}}u(Y,s)=f(X_0,t_0), 
\]
where $\eta$ is chosen such that $\partial D\cap\Gamma^\eta(X_0,t_0)=\{(X_0,t_0)\}$, i.e\ $\eta>M$. Having made such a choice of $\eta$ we simply denote $N(u) = N^\eta(u)$. 

In all our estimates $C$ denotes a constant that depends only upon the dimension $n$, the ellipticity constant $\Lambda$ and possibly $m,r_0$. 



\begin{Th}\label{per1}
Suppose that $A$ is a real and symmetric matrix satisfying \eqref{eq:Aellip} -- \eqref{eq:Dini}
and $D$ is an unbounded Lipschitz domain defined as in \eqref{unbounded}.
Then, for certain $0<\delta<1$ and any $f\in L^p(\partial D)$, $2-\delta<p<\infty$, there exists a unique solution to the Dirichlet problem 
\begin{equation*}
\left\{\begin{aligned}
\HH u&=0\quad\text{in }D,\\
u&=f\quad \text{n.t. on }\partial D,\\
\end{aligned}\right.
\end{equation*}
verifying 
\[
\|N(u)\|_{L^p(\partial D)}
	 \le C \|f\|_{L^p(\partial D)}. 
\]
\end{Th}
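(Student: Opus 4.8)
\emph{The plan} is to follow Dahlberg's potential-theoretic method: reduce to the half-space, transfer $L^2$-solvability from a reference operator whose coefficients are independent of $\lambda$ by a homogenization comparison, and then run the standard real-variable machinery to reach the whole range $2-\delta<p<\infty$. First I would flatten $\partial D$ by the bi-Lipschitz map $(x,t,\lambda)\mapsto(x,t,\lambda-\phi(x))$, which sends $D$ onto $\R^{n+2}_+$ and turns $\HH$ into a parabolic operator of the same type whose matrix $\widetilde A$ is again time-independent, $1$-periodic in the new normal variable, and Dini in that variable in the sense of \eqref{eq:Dini} with $\theta$ replaced by $C(m)\theta$ --- the point being that $\partial_\lambda$ is untouched by the flattening, so periodicity and the Dini modulus survive (one only loses any regularity in $x$, which was never present). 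I would then record the facts valid for any time-independent uniformly parabolic operator with bounded measurable coefficients in a Lipschitz cylinder: interior De Giorgi--Nash--Moser bounds, boundary Hölder continuity of solutions vanishing on a boundary cube, the maximum principle, existence of the parabolic measure $\omega^{(X,t)}$ with the representation $u(X,t)=\int_{\partial D}f\,d\omega^{(X,t)}$ for bounded solutions, and the Carleson/comparison estimates of CFMS type, from which doubling of $\omega$ follows; none of this uses periodicity. For the existence assertion, for $f$ bounded with compact support I would exhaust $D$ by bounded Lipschitz cylinders, solve the classical Dirichlet problems there with mollified data, and pass to the limit using the a priori estimates below; n.t.\ convergence $u\to f$ at $\sigma$-a.e.\ boundary point then comes from boundary regularity together with the doubling/$A_\infty$ property of $\omega$, and general $f\in L^p$ is handled by density once the estimate is in hand.

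The core step, and what I expect to be the main obstacle, is the $L^2$ estimate with a constant independent of the period. Let $\bar\HH=\partial_t-\dv(\bar A(x)\nabla\,\cdot\,)$ be the operator obtained by homogenizing $\widetilde A$ in its $\lambda$-period. The decisive structural fact is that $\bar A$ is \emph{independent of $\lambda$} (only the $x$-behaviour, i.e.\ mere measurability, persists), so the $L^2$ Dirichlet problem for $\bar\HH$ in a Lipschitz cylinder is already solvable --- this is exactly the $\lambda$-independent case recalled in the introduction --- and in particular the associated parabolic measure $\bar\omega$ satisfies an $A_\infty(d\sigma)$ estimate. To carry this over to $\HH$ I would use parabolic scale invariance: writing $\HH_\eps$ for the operator with matrix $\widetilde A(\cdot/\eps)$ (period $\eps$), a parabolic dilation shows that the non-tangential maximal estimate for $\HH=\HH_1$ on $\R^{n+2}_+$ is equivalent, \emph{with the same constant}, to the one for $\HH_\eps$ on $\R^{n+2}_+$; it therefore suffices to prove it for one conveniently small $\eps$. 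For small $\eps$ one compares the solution $u_\eps$ with the solution $\bar u$ of the $\bar\HH$-problem with the same data: introducing the periodic correctors attached to $\widetilde A$ --- which inherit from \eqref{eq:Dini} just enough continuity for the construction to run --- and writing $u_\eps=\bar u+\eps(\text{corrector terms})+(\text{remainder})$, the Dini condition is precisely what forces the corrector contribution and the remainder to be controlled in the non-tangential maximal norm by an error tending to $0$ as $\eps\to0$. A Dahlberg-style perturbation argument comparing the two parabolic measures $\omega_\eps$ and $\bar\omega$ (mutually absolutely continuous, with quantitatively compatible bounds, by the above) then yields $\omega_\eps\in A_\infty(d\sigma)$ with $\eps$-independent constants, and scaling back gives $\omega\in A_\infty(d\sigma)$ for $\HH$ with a constant depending only on $n,\Lambda,m$ --- hence independent of the period. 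The hard part here is the parabolic corrector theory under only a Dini hypothesis in $\lambda$ and no regularity in $x$, and the translation of the homogenization error into a bound on the non-tangential maximal function up to the boundary.

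Finally, from $\omega\in A_\infty(d\sigma)$ the density $k=d\omega/d\sigma$ lies in the reverse Hölder class $B_2(d\sigma)$, hence in $B_{2+c}(d\sigma)$ for some $c>0$ by self-improvement; equivalently the $L^p$ Dirichlet problem for $\HH$ in $D$ is solvable for every $p>(2+c)'=:2-\delta$, the endpoint $p<\infty$ being automatic since $B_2(d\sigma)\subset B_{p'}(d\sigma)$ for all $p\ge2$. Concretely, given $f\in L^p(\partial D)$ with $2-\delta<p<\infty$ I would set $u(X,t)=\int_{\partial D}f\,d\omega^{(X,t)}$; the reverse Hölder bound for $k$ together with the standard pointwise domination of $N(u)$ by the Hardy--Littlewood maximal function of $f$ along $\partial D$ gives $\|N(u)\|_{L^p(\partial D)}\le C\|f\|_{L^p(\partial D)}$, while boundary regularity and the Lebesgue differentiation theorem for the doubling measure $\omega$ give $u\to f$ n.t.\ $\sigma$-a.e. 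Uniqueness is the familiar argument: a solution with $N(u)\in L^p(\partial D)$ and vanishing n.t.\ trace is represented against $\omega$ by that trace and so vanishes identically, the hypothesis $N(u)\in L^p$ being exactly what excludes parasitic solutions entering from temporal infinity in the time-unbounded cylinder $D$.
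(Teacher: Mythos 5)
There is a genuine gap at the heart of your argument: the ``core step'' is built on a corrector/homogenization comparison that cannot even be set up under the hypotheses of Theorem \ref{per1}. The matrix $A$ (and hence the flattened matrix $\widetilde A$) is periodic and Dini \emph{only in the single variable} $\lambda$; in $x$ it is merely bounded and measurable. There is therefore no cell problem and no corrector theory attached to $\widetilde A$: ``homogenizing $\widetilde A$ in its $\lambda$-period'' does not produce a fixed operator $\bar\HH$ once you invoke the parabolic dilation, because the scaling that turns $\HH_1$ into $\HH_\eps$ rescales $x$ as well, so the coefficients are $\widetilde A(x/\eps,\lambda/\eps)$ and the would-be ``$\lambda$-averaged'' matrix changes with $\eps$ and has no limit (and no quantitative corrector expansion, since quantitative estimates require structure in $x$ that is absent). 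Moreover, even in the genuinely periodic setting of Theorem \ref{th:homogenization}, the route you propose is exactly the first Kenig--Shen method, which the paper points out is unavailable here: the square-function/integral identities needed to convert an $O(\eps)$ interior approximation $u_\eps\approx \bar u+\eps(\text{correctors})$ into a uniform nontangential-maximal or $A_\infty$ bound do not exist in the parabolic case, and the corrector approximation degenerates precisely in the boundary layer where the estimate must be proved. Your ``Dahlberg-style perturbation of $\omega_\eps$ against $\bar\omega$'' would in any case require a vanishing Carleson-measure condition on the coefficient discrepancy (as in Theorem \ref{transfer}), not mere smallness of the solutions in the interior, and no such condition holds between $\widetilde A(\cdot/\eps)$ and a $\lambda$-independent matrix.

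The paper's proof uses the $\lambda$-independent theory and periodicity in a completely different way. First, \emph{local} solvability (estimate \eqref{eq:loc} for $0<r\le 1$) is obtained by comparing the localized operator with the one with frozen coefficients $A(x,0)$ (independent of $\lambda$, solvable by \cite{CNSande,N2}) via Nystr\"om's perturbation theorem (Theorem \ref{transfer}); the Dini condition \eqref{eq:Dini} enters exactly to verify the vanishing Carleson condition, with no homogenized matrix anywhere. Second, periodicity in $\lambda$ is exploited through Dahlberg's device (Theorem \ref{Prop:2.4forrgeq1}): the difference $\Q u(x,t,\lambda)=u(x,t,\lambda+1)-u(x,t,\lambda)$ of a solution is again a solution, and combining this with Caccioppoli, the comparison principle, the Green's function/caloric measure estimates and the symmetry/time-invariance identity \eqref{rearrange}, one propagates \eqref{eq:loc} from $r\le 1$ to all $r>1$, yielding the reverse H\"older inequality and hence $L^p$ solvability with constants independent of the period. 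Finally, your uniqueness step is also asserted rather than proved: representing a solution with $N(u)\in L^p$ by its nontangential trace is itself the content of Proposition \ref{Th:Uniq}, which requires the cutoff/Green's function argument and the fact that $K(Z,\tau;\cdot)\in L^{p'}$.
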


With Theorem \ref{per1} in place we are able to analyze a homogenization problem that we now describe. In addition to \eqref{eq:Aellip} and \eqref{eq:Atindep} we assume that 
\begin{equation}\label{perA}
A(X+Z)=A(X),\quad \text{for all }Z\in \Z^{n+1},
\end{equation}
and
\begin{equation}\label{DiniA}
  \int_0^1 \frac{\Theta(\rho)^2}{\rho} d\rho < \infty,
\end{equation}
where $\Theta(\rho):= \{|A(X) - A(Y)| \ : \ X,Y\in \R^{n+1}, \ |X-Y| \leq \rho \}$.
That is, $A$ is periodic with respect to the lattice $\Z^{n+1}$ and satisfies a Dini condition in all variables. 

For each $\eps>0$, consider the operator $\LL_\eps$  given by 
\[
\LL_\eps u :=-\text{div}(A_\eps(X)\nabla u),\quad A_\eps(X) := A\left(\frac{X}{\eps}\right).
\]
We also need to introduce $\bar \LL$, 
\[
\bar \LL u := -\text{div}(\bar A\nabla u),
\]
where the matrix $\bar A$ is determined by
\begin{equation*}\label{barA}
\bar{A}^t\alpha := \int_{(0,1)^n}A^t\nabla w_\alpha dy,
\quad \alpha\in \R^{n+1},
\end{equation*}
and the auxiliary function $w_\alpha$ solves the problem
\begin{equation*}
\left\{\begin{array}{l}
-\text{div}\left(A^t\nabla w_\alpha\right)=0 \text{ in }(0,1)^{n+1}, 
\vspace{0.25cm} \\
w_\alpha-\alpha y\text{ is }1-\text{periodic}, \vspace{0.25cm} \\
\displaystyle \int_{(0,1)^{n+1}}(w_\alpha-\alpha y)dy=0.
\end{array}\right.
\end{equation*}

Now we can state our homogenization result.
\begin{Th}\label{th:homogenization} 
Suppose that $A$ is a real and symmetric matrix satisfying \eqref{eq:Aellip}, \eqref{eq:Atindep}, \eqref{perA} and \eqref{DiniA}.  Let $\Omega_T$ be as in \eqref{cyl}. Then for any $\eps>0$ and $f\in L^p(\partial_L\Omega_T)$, $2-\delta<p<\infty$, 
there exists a unique solution $u_\eps$ to the Dirichlet problem 
\begin{equation}\label{eqeps}
\left\{\begin{aligned}
 \partial_t u_\eps + \LL_\eps u_\eps & = 0\quad\text{in }\Omega_T,\\
u_\eps&=f\quad \text{n.t. on }\partial_L \Omega_T,\\
u_\eps(X,0)&=0 \quad \text{in }\overline{\Omega},\\
\end{aligned}\right.
\end{equation}
satisfying 
\begin{equation}\label{nteps}
\|N(u_\eps)\|_{L^p(\partial_L \Omega_T)}\le C\|f\|_{L^p(\partial_L \Omega_T)}. 
\end{equation}
Moreover, as $\eps\to0$, $u_\eps$ converges locally uniformly in $\Omega_T$ to $\bar u$, which is the unique solution to 
\begin{equation}\label{homogeneous}
\left\{\begin{aligned}
\partial_t \bar u + \bar\LL \bar u &= 0\quad\text{in }\Omega_T,\\
\bar u &= f\quad \text{n.t. on }\partial_L\Omega_T, \\
\bar u(X,0)& = 0 \quad \text{in }\overline{\Omega},\\
\end{aligned}\right.
\end{equation}
with
\[
\|N(\bar u)\|_{L^p(\partial_L \Omega_T)}\le C\|f\|_{L^p(\partial_L \Omega_T)}. 
\]
\end{Th}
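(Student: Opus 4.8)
The plan is to deduce the well-posedness statement \eqref{eqeps}--\eqref{nteps} for fixed $\eps$ from Theorem \ref{per1} by a localization/scaling argument, and then to extract the homogenization limit by a compactness argument combined with the uniformity of the estimates in $\eps$. First I would observe that the rescaled matrix $A_\eps(X)=A(X/\eps)$ is elliptic with the same constant $\Lambda$, time-independent, periodic with period $\eps$ in every direction (in particular in the direction normal to each boundary piece after rotation), and satisfies a Dini condition with modulus $\theta_\eps(\rho)=\Theta(\rho/\eps)$; since $\int_0^1\Theta(\rho)^2/\rho\,d\rho=\int_0^{1/\eps}\theta_\eps(\rho)^2/\rho\,d\rho$ is still finite, hypotheses \eqref{eq:Aellip}--\eqref{eq:Dini} hold for $A_\eps$ on each local chart of $\Omega_T$. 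The point emphasized in the introduction is that the constant $C$ in Theorem \ref{per1} depends only on $n$, $\Lambda$ (and the Lipschitz data), \emph{not} on the period, so the resulting bound is uniform in $\eps$. The passage from the model half-space domain $D$ to the bounded Lipschitz cylinder $\Omega_T$ with the extra initial condition $u_\eps(X,0)=0$ is a standard construction: cover $\partial_L\Omega_T$ by the coordinate cylinders $U(x_0,t_0,\lambda_0)$ from \eqref{localbdry}, solve the half-space problem of Theorem \ref{per1} in each chart, patch with a partition of unity, and absorb the commutator and interior errors using interior parabolic regularity and the maximum principle; the vanishing initial data is handled by extending $f$ by zero for $t<0$ and using that the domain of influence is parabolic, so that near $t=0$ the solution is controlled by $f|_{t\,\text{small}}$. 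Uniqueness follows from the energy/maximum-principle argument together with the non-tangential maximal function bound, exactly as for $D$.

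For the homogenization limit I would argue as follows. By \eqref{nteps}, $\{N(u_\eps)\}_\eps$ is bounded in $L^p(\partial_L\Omega_T)$, hence $\{u_\eps\}$ is locally uniformly bounded in $\Omega_T$; by interior De Giorgi--Nash--Moser estimates for parabolic equations in divergence form (which depend only on $n$ and $\Lambda$), $\{u_\eps\}$ is also locally equi-Hölder, so by Arzelà--Ascoli a subsequence converges locally uniformly to some $\bar u$. Standard periodic homogenization for parabolic operators in divergence form with time-independent, $\Z^{n+1}$-periodic coefficients (the corrector/oscillating-test-function method of Tartar, using the correctors $w_\alpha$ defined above) identifies any such limit as a solution of $\partial_t\bar u+\bar\LL\bar u=0$ in $\Omega_T$ with the homogenized matrix $\bar A$; since $\bar A$ is constant and elliptic, \eqref{homogeneous} has a unique solution with the stated non-tangential bound, and that solution inherits the estimate $\|N(\bar u)\|_{L^p}\le C\|f\|_{L^p}$ by Fatou's lemma applied along the convergent subsequence to $N(u_\eps)$. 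Uniqueness of the limit then upgrades subsequential convergence to convergence of the whole family $u_\eps\to\bar u$.

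The remaining — and most delicate — point is that the limit $\bar u$ actually attains the boundary data $f$ non-tangentially on $\partial_L\Omega_T$. Local uniform convergence in the interior says nothing a priori about the behavior at the Lipschitz boundary, and $f$ is merely $L^p$, so one cannot simply pass to the limit in a continuous trace. I expect this to be the main obstacle. The way I would handle it is to prove that the family $u_\eps$ is \emph{equicontinuous up to the boundary in an averaged sense}: combine the uniform $N$-bound \eqref{nteps} with boundary Hölder/Carleson-type estimates for $\HH_\eps$ near a Lipschitz boundary — these again have constants independent of the period, by the same localization used to prove Theorem \ref{per1} — to show that the boundary traces converge in $L^p_{loc}(\partial_L\Omega_T)$ and that $\bar u$ converges non-tangentially to the same limit. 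More precisely, one controls $\mean{T_r(x,t)}|u_\eps-f(x,t)|$ uniformly in $\eps$ by the maximal function of $f$ plus an oscillation term that tends to $0$ as $r\to0$ uniformly in $\eps$; passing $\eps\to0$ then $r\to0$ and invoking the Lebesgue differentiation theorem on $\partial_L\Omega_T$ gives the n.t.\ attainment for $\bar u$. Alternatively, and perhaps more cleanly, one can first establish the homogenization statement for continuous $f$ (where the boundary behavior is classical and the corrector argument applies globally), obtain \eqref{homogeneous} there, and then extend to $f\in L^p$ by density together with the uniform bound \eqref{nteps}, which makes $f\mapsto u_\eps$ and $f\mapsto\bar u$ uniformly bounded linear maps into the space with norm $\|N(\cdot)\|_{L^p}$; the limit relation, being valid on a dense set and stable under the uniform operator bounds, then passes to all of $L^p(\partial_L\Omega_T)$.
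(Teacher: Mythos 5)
Your overall skeleton (uniform-in-$\eps$ estimates from Theorem \ref{per1}, interior compactness plus Tartar's corrector method, and ``smooth data first, then density'' for the boundary attainment of $\bar u$) matches the paper's Section 3, and the last part of your proposal is essentially the paper's argument. But there are two genuine gaps in the first half. First, your assertion that $A_\eps$ is ``periodic with period $\eps$ in every direction (in particular in the direction normal to each boundary piece after rotation)'' is false: a $\Z^{n+1}$-periodic matrix is periodic only in \emph{rational} directions $\nu=\nu_0/|\nu_0|$, $\nu_0\in\Z^{n+1}$, so after the rotation that flattens a local boundary chart the rotated matrix $\tilde A$ is in general not periodic in the new vertical variable, and Theorem \ref{per1} does not apply in that chart. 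This is precisely the obstruction the paper must deal with: since $\partial\Omega$ is Lipschitz there is room to rotate the chart a little further so that the new vertical direction is rational (see \eqref{periodicity}--\eqref{normal}), at the price of a larger period --- which is harmless only because the estimates are period-independent. Relatedly, you cannot invoke Theorem \ref{per1} directly for $A_\eps$: the theorem is stated for period $1$, and the Dini integral of $A_\eps$ is of order $\log(1/\eps)$, so arguing with $A_\eps$ itself would a priori give $\eps$-dependent constants. The paper obtains uniformity by the parabolic rescaling $(x,t,\lambda)=(\eps y,\eps^2 s,\eps\sigma)$, which turns $\partial_t+\LL_\eps$ into $\partial_t+\LL$ (period $1$, the original Dini modulus) and turns the graph domain into one with the \emph{same} Lipschitz constant; that is where the $\eps$-independence actually comes from.

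Second, your passage from the graph domain to the bounded cylinder $\Omega_T$ --- solving half-space problems in charts, patching with a partition of unity, and ``absorbing commutator errors'' --- is not a viable route in this framework: the patched function no longer solves $\HH_\eps u=0$, the error terms involve $\nabla u_\eps$ multiplied by cut-off derivatives, and with merely $L^p$ boundary data there is no mechanism to absorb them inside nontangential-maximal-function estimates. The paper instead localizes at the level of the kernel: it proves the reverse H\"older inequality for $K_\eps^{\Omega_T}$ on surface cubes $\Delta_r\subset\partial_L\Omega_T$ by comparing the caloric measure and Green's function of $\Omega_T$ with those of the associated unbounded graph domain $D$ (for which solvability is already known), using Lemma \ref{Gomega}, the boundary comparison principle Lemma \ref{comp}, the backward Harnack inequality of \cite{FGS} and the doubling property \eqref{doubling}, and then concludes \eqref{eqeps}--\eqref{nteps} from Lemma \ref{RHsolv}. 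Some version of this kernel/caloric-measure comparison (or an equivalent localization of the reverse H\"older inequality) is needed; as written, your patching step and the rotation issue above are the places where the proposal would break down.
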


 In the elliptic case, Theorem \ref{per1} and the first part of Theorem \ref{th:homogenization} (\eqref{eqeps} and \eqref{nteps}) was proved by Kenig and Shen in \cite{KS}. 
In \cite{KS} the authors also treat the Neumann and regularity problems. The theory for the Neumann and regularity problems is based on the use of integral identities to estimate certain nontangential maximal functions. These integral identities are not available in the parabolic case and thus homogenization of Neumann and regularity problems remain an interesting and challenging open problem. 

The main tools in our analysis are Harnack inequalities and the estimation of Green's function in terms of $\LL$-caloric measure and vice versa, see Section \ref{lemmas}. The main difficulty in the parabolic setting is the time-lag that is present in these estimates. Our requiring that the matrix $A$ is time independent and symmetric leads to spatial symmetry and time-invariance of Green's function, see \eqref{rearrange}. This becomes a key point in the proof of the parabolic version of Dahlbergs theorem in Section \ref{Dahlberg}

\section{The Dirichlet Problem}\label{Sec:2}

We now turn to the proof of Theorem \ref{per1}. Since $D$ is globally defined by a Lipschitz graph, the situation of the proof may be reduced to the upper half space in a standard way, see for example \cite[p. 905]{KS}. 
Thus, the goal of this section is to solve the Dirichlet problem for the operator $\HH$ in the upper half space $\R^{n+2}_+$ with given boundary data on $\partial \R^{n+2}_+\equiv \R^{n+1}$. \\

 
\begin{Def}\label{solvability}
We say that the Dirichlet problem for $\HH u =0$ in $\R^{n+2}_+$ is \textit{solvable} in $L^p$ if there exists $0<\delta<1$ such that for every $2-\delta<p<\infty$ and every $f \in C_c(\R^{n+1})$, the solution to the Dirichlet problem
\begin{equation}\label{classical}
\left\{\begin{aligned}
\HH u&=0\quad\text{in } \R^{n+2}_+,\\
u&=f\quad \text{n.t. on } \R^{n+1},\\
\end{aligned}\right.
\end{equation}
verifies
$$ \|N(u)\|_{L^p(\R^{n+1})}
	\le C \|f\|_{L^p(\R^{n+1})}.$$  
\end{Def}
It can be shown that \eqref{classical} has a unique solution by analyzing, for any $k=1,2,\ldots$, the problems
\begin{equation*}
\left\{\begin{aligned}
&\HH u^k=0\quad\text{in } T_k(0,0),\\
&u^k=f\quad \text{n.t. on } Q_k(0,0),\quad u^k=0\text{ on }\partial_L T_k(0,0)\setminus Q_k(0,0),\\
&u^k(X,-k^2)=0\text{ on }\overline{T_k(0,0)}\cap\{t=-k^2\},
\end{aligned}\right.
\end{equation*}
and define $u:=\lim_{k\to\infty}u^k$ which will solve \eqref{classical}. 
This allows us to define the $\LL$-caloric measure $\omega:=\omega^{Z,\tau}$ on $\R^{n+1}$, which satisfies 
\[u(Z,\tau)=\int_{\R^{n+1}}f(x,t)d\omega(x,t),\]
where $u$ is the solution to \eqref{classical}. 
If $U$ is an open subset of $\R^{n+1}$, we say that $u$ is $\LL$-caloric in $U$ if $\HH u = \partial_tu+\LL u=0$ in $U$. If $-\partial_t u+\LL u = 0$ in $U$, we say that $u$ is adjoint $\LL$-caloric in $U$. 
The caloric measure is a doubling measure, i.e.\ 
\begin{equation}\label{doubling}
\omega(Q_{2r}(x_0,t_0))\le C\omega(Q_r(x_0,t_0)), 
\end{equation}
see \cite{FGS} for a proof.
Assuming that $d\omega$ and $dxdt$ are mutually absolutely continuous, we define the kernel $K(Z,\tau;x,t)$ with respect to the point $(Z,\tau)\in \R^{n+2}_+$ by 
\begin{equation}\label{eq:kernelK}
K(Z,\tau;x,t):= \lim_{r\to0}\frac{\omega(Q_r(x,t))}{|Q_r(x,t)|}.
\end{equation}
The solution to \eqref{classical} may thus be represented as 
\[u(Z,\tau)=\int_{\R^{n+1}}K(Z,\tau;x,t)f(x,t)dxdt.\]

We recall that the solvability in $L^2$ of the Dirichlet problem in $\R^{n+2}_+$ (in the sense of Definition \ref{solvability}) is equivalent to the reverse H\"older inequality for the kernel $K$ (see Lemma \ref{RHsolv} below):
\begin{equation}\label{eq:revhold}
\left(\frac{1}{|Q_r(x,t)|}\int_{Q_r(x,t)}|K(Z,\tau;y,s)|^2dyds\right)^{1/2}\le \frac{C}{|Q_r(x,t)|}\int_{Q_r(x,t)}|K(Z,\tau;y,s)|dyds, 
\end{equation}
for all $(x,t)\in \R^{n+1}$ and all $(Z,\tau)\in \R^{n+2}_+$ for which $|(x,0)-Z|^2\le|t-\tau|$ and 
$\tau-t\ge 4 r^2$. 
The reverse H\"older inequality is self improving in the sense that if \eqref{eq:revhold} holds, then there exists $\alpha>2$ such that 
\begin{equation}\label{eq:revholdalpha}
\left(\frac{1}{|Q_r(x,t)|}\int_{Q_r(x,t)}|K(Z,\tau;y,s)|^\alpha dyds\right)^{1/\alpha}\le \frac{C}{|Q_r(x,t)|}\int_{Q_r(x,t)}|K(Z,\tau;y,s)|dyds.
\end{equation}
This is a consequence of Gehring's Lemma (\cite[Lemma 3]{Geh}), adapted to parabolic cubes. 
In turn, the reverse H\"older inequality is equivalent to the following condition
(see Proposition \ref{Prop:equiv2.3-2.4} below):
\begin{equation}\label{eq:loc}
\int_{Q_r(x_0,t_0)}\limsup_{\lambda\to0}\left|\frac{u(x,t,\lambda)}{\lambda}\right|^2dxdt\le \frac{C}{r^3}\int_{T_{2r}(x_0,t_0)}|u(x,t,\lambda)|^2dxdtd\lambda, \quad r>0,
\end{equation}
provided that $\HH  u =(\partial_t+\LL )u= 0$ in $T_{4r}(x_0,t_0)$ and $u(x,t,0)=0$ on $Q_{4r}(x_0,t_0)$.
Shortly, we call \eqref{eq:loc} a \textit{local solvability} condition when \eqref{eq:loc} holds for $0<r \leq 1$. 

If \eqref{classical} holds for $\HH^*= -\partial_t+\LL$ instead of $\HH = \partial_t+\LL$, we say that $u$ solves the adjoint Dirichlet problem. Analogously, we define the adjoint $\LL$-caloric measure $\omega^*$ and the adjoint kernel $K^*(Z,\tau;y,s)$. It is easy to see that the adjoint Dirichlet problem is solvable if and only if the Dirichlet problem for $\HH$ is solvable by considering the change of variables $t\mapsto -t$. This leads to analogous equivalent solvability conditions for the adjoint Dirichlet problem. For example, \eqref{eq:loc} holds for caloric functions if and only if it holds for adjoint caloric functions. 

Our first step in the proof of Theorem \ref{per1} is to establish \eqref{eq:loc} for $0<r<1$. This is achieved by localizing the operator and using the perturbation theory developed in \cite{N1}. Then we utilize an ingenious technique developed by Dahlberg to show that the periodicity of $A$ implies that \eqref{eq:loc} also holds for all $r>1$, see Theorem \ref{Prop:2.4forrgeq1} below. \\

For Lipschitz cylinders $\Omega_T=\Omega\times(0,T)$, we say that the $L^p$ Dirichlet problem is solvable in $\Omega_T$ if there exists $0<\delta<1$ such that for every $2-\delta<p<\infty$ and for every $f\in C_c(\partial_L\Omega_T)$, there exists a solution to the Dirichlet problem
\begin{equation*}\label{cylinder}
\left\{\begin{aligned}
\HH u&=0\quad\text{in } \Omega_T,\\
u&=f\quad \text{n.t. on } \partial_L\Omega_T,\\
u&=0\quad\text{on }\partial_P\Omega_T=\overline{\Omega}\times\{t=0\},
\end{aligned}\right.
\end{equation*}
such that 
$$ \|N(u)\|_{L^p(\partial_L\Omega_T)}
	\le C \|f\|_{L^p(\partial_L\Omega_T)}.$$  
The solvability is equivalent to \eqref{eq:revhold} and \eqref{eq:loc}, with $Q_r(x,t)$ replaced by 
\[\Delta_r(X,t) = \widetilde Q_r(X,t)\cap\partial_L\Omega_T,\; (X,t)\in \partial_L\Omega_T\text{ and }(Z,\tau)\in \Omega_T,\] 
and with the measure $d\sigma(X,t)$, see \eqref{measure}, in place of $dxdt$. 

\subsection{Preliminaries}\label{lemmas}

We now recall some well known results that will be needed for the proof of Theorem \ref{per1}. For the Lemmas \ref{Gomega}-\ref{comp} below we refer to \cite{FGS} and the references therein. 
For a time-independent Lipschitz domain $D$ (given either by \eqref{unbounded} or \eqref{cyl}), we denote by $G$ Green's function with respect to $D$, with the convention that $G(X,t;Z,\tau)$ is Green's function with pole at $(Z,\tau)\in D$. Green's function $G=G(\cdot;Z,\tau)$, as a function of $(X,t)$, satisfies 
\begin{align}
&\partial_t G(X,t)+\LL G(X,t) = \delta(X-Z,t-\tau)\quad\text{in }D,\label{Gcal}\\
&G(X,t)=0\quad\text{on }\partial_LD\cup\partial_P D. \label{Gbdry}
\end{align}
Since the operator $\LL$ is symmetric we have $G(X,t;Z,\tau)=G(Z,t;X,\tau)$. Additionally, the time-independence of $A$ implies that $G(X,t;Z,\tau)$ depends only on the time difference $t-\tau$. To see this we note that if the function $v(X,t)$ is $\LL$-caloric, then so is $v(X,t+t_0)$. It follows that $G(X,t+t_0;Z,\tau+t_0)$ satisfies \eqref{Gcal} and \eqref{Gbdry}. Combining the symmetry in space and the time-invariance we obtain 
\begin{equation}
G(X,t;Z,\tau) = G(Z,t+t_0;X,\tau+t_0). \label{rearrange}
\end{equation}
We also recall the estimate 
\begin{equation}\label{greenestimate}
G(X,t;Z,\tau)\le \frac{C}{\|(X-Z,t-\tau)\|^{n+1}}.
\end{equation}
We shall also consider the adjoint Green's function $G^*(X,t)$ with pole at $(Z,\tau)$, given by $$G^*(X,t) = G^*(X,t;Z,\tau)=G(Z,\tau;X,t),$$ which is adjoint $\LL$-caloric as a function of $(X,t)$ for $t<\tau$. 

\begin{Lem}\label{Gomega} 
Let $G$ and $\omega$ be Green's function and the $\LL$-caloric measure of $T_R(x_0,t_0)$ or $\R^{n+2}_+$. Suppose $|(x_0,0)-(x,\lambda)|^2\le A|t-t_0|$, and $(x,t,\lambda)\in T_R(x_0,t_0)$ or $(x,t,\lambda)\in \R^{n+2}_+$. Then there exists a constant $c=c(A)\ge 1$ such that if $t-t_0\ge 4\rho^2$, then 
\begin{equation*}
c^{-1}\rho^{n+1}G(x,t,\lambda;x_0,t_0+\rho^2,\rho)\le \omega(x,t,\lambda,\Delta(x_0,t_0,\rho/2))\le c\rho^{n+1}G(x,t,\lambda;x_0,t_0-\rho^2,\rho),
\end{equation*}
and if $t_0-t\ge 4\rho^2$, 
\begin{equation*}
c^{-1}\rho^{n+1}G(x_0,t_0-\rho^2,\rho;x,t,\lambda)\le \omega^*(x,t,\lambda,\Delta(x_0,t_0,\rho/2))\le c\rho^{n+1}G(x_0,t_0+\rho^2,\rho;x,t,\lambda).
\end{equation*}
\end{Lem}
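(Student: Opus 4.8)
The plan is to establish the two-sided comparison between $\LL$-caloric measure of a surface cube $\Delta(x_0,t_0,\rho/2)$ and the Green function evaluated at the ``corkscrew'' point $(x_0,t_0\pm\rho^2,\rho)$. The argument is standard parabolic potential theory (see \cite{FGS}), and I would carry it out as follows. First, fix the cube $Q_0=Q_\rho(x_0,t_0)$ and its associated Carleson box $T_\rho(x_0,t_0)$; the corkscrew point $P^\pm=(x_0,t_0\pm\rho^2,\rho)$ sits at parabolic distance $\simeq\rho$ from $(x_0,t_0,0)$ and from $\partial_L T_\rho$, with the time component shifted forward/backward by $\rho^2$ to respect the intrinsic time-lag of the parabolic kernel (this shift is exactly why two different poles appear on the two sides of the inequality). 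The hypothesis $|(x_0,0)-(x,\lambda)|^2\le A|t-t_0|$ together with $t-t_0\ge 4\rho^2$ guarantees that the evaluation point $(x,t,\lambda)$ lies ``well above and well after'' the cube, so that both $\omega(x,t,\lambda,\cdot)$ and $G(x,t,\lambda;\cdot)$ are caloric (resp.\ adjoint caloric) and nonnegative on a neighborhood of $\overline{T_{\rho/2}(x_0,t_0)}$, away from their singularities.

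The core of the proof is a comparison argument on the Carleson box. On the one hand, the maximum principle gives an upper bound for $\omega(x,t,\lambda,\Delta(x_0,t_0,\rho/2))$ by a multiple of a caloric function that vanishes on the ``sides'' of $T_\rho$; on the other hand, $\rho^{n+1}G(\cdot;x_0,t_0\mp\rho^2,\rho)$ is, up to a constant, such a comparison function, because of the normalization $G\simeq \rho^{-(n+1)}$ at parabolic distance $\rho$ from the pole (which follows from \eqref{greenestimate} and a matching lower bound from the Harnack inequality and the fact that $G$ does not vanish in the interior). To pass between the two, I would invoke the boundary Harnack principle / comparison principle for nonnegative caloric functions vanishing on a portion of the Lipschitz boundary $Q_{4\rho}(x_0,t_0)\times\{\lambda=0\}$, applied to the pair $\omega(\cdot,\Delta(x_0,t_0,\rho/2))$ and $\rho^{n+1}G(\cdot;P^\mp)$: their ratio is bounded above and below by constants depending only on $n,\Lambda,A$. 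The forward time-shift $t_0+\rho^2$ versus the backward shift $t_0-\rho^2$ in the pole is dictated by which direction the Harnack chain must propagate in time; getting the inequalities to point the right way is precisely where the time-lag enters. The second displayed inequality is obtained from the first by the change of variables $t\mapsto -t$, which interchanges $\LL$-caloric and adjoint $\LL$-caloric functions, sends $\omega$ to $\omega^*$, and turns $G(X,t;Z,\tau)$ into $G^*$; alternatively one uses \eqref{rearrange} directly to relocate the pole.

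The main obstacle, as the authors themselves flag in the introduction, is the parabolic time-lag: unlike the elliptic case, one cannot use the \emph{same} pole on both sides of the estimate, and one must carefully track that the Harnack chains connecting the corkscrew point to the evaluation point $(x,t,\lambda)$ stay inside the domain and respect causality (caloric functions propagate information only forward in time). The hypotheses $t-t_0\ge 4\rho^2$ and $|(x_0,0)-(x,\lambda)|^2\le A|t-t_0|$ are exactly what make such chains of controlled length $c(A)$ available. Everything else — the interior estimate \eqref{greenestimate}, the doubling property \eqref{doubling}, and the boundary comparison principle — is quoted from \cite{FGS}, so I would not reprove those; I would only assemble them in the correct order and with the correct bookkeeping of the time shifts.
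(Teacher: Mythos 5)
The paper itself offers no proof of Lemma \ref{Gomega}: it is imported from \cite{FGS} (see the sentence ``For the Lemmas \ref{Gomega}--\ref{comp} below we refer to \cite{FGS}''), so there is no in-paper argument to compare with. Your outline does follow the standard Fabes--Salsa/Fabes--Garofalo--Salsa route (maximum principle on a Carleson box over the surface cube, normalization of $G$ at a corkscrew point, poles shifted forward/backward in time to respect causality), and your explanation of why two different poles appear is correct.

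Two steps of your plan, however, would not go through as written. First, you propose to run the boundary Harnack/comparison principle on the pair $\omega(\cdot,\Delta(x_0,t_0,\rho/2))$ and $\rho^{n+1}G(\cdot;x_0,t_0\mp\rho^2,\rho)$ as ``nonnegative caloric functions vanishing on $Q_{4\rho}(x_0,t_0)\times\{\lambda=0\}$''. But the caloric measure of $\Delta(x_0,t_0,\rho/2)$ does \emph{not} vanish there: its boundary values equal $1$ on the central sub-cube, so Lemma \ref{comp} is not applicable on any box containing $\Delta(x_0,t_0,\rho/2)$. In \cite{FS} and \cite{FGS} the two inequalities are instead obtained by the parabolic maximum principle applied \emph{outside} a box $T_{c\rho}(x_0,t_0)$: for the lower estimate one verifies $\rho^{n+1}G(\cdot;x_0,t_0+\rho^2,\rho)\le C\,\omega(\cdot,\Delta(x_0,t_0,\rho/2))$ on the boundary of that box using \eqref{greenestimate} together with a Bourgain-type bound of the form \eqref{omegapositive} (this is exactly where the forward shift is needed), while for the upper estimate one needs a pointwise \emph{lower} bound on $G(\cdot;x_0,t_0-\rho^2,\rho)$ on the causally reachable part of the box boundary. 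Second, that lower bound $G\gtrsim\rho^{-n-1}$ cannot be taken from \eqref{Gpositive}, since in this paper \eqref{Gpositive} is itself deduced from Lemma \ref{Gomega}; quoting it would be circular. It must be derived independently (e.g.\ from Aronson's lower Gaussian bound for the fundamental solution corrected by a maximum-principle argument, as in \cite{FGS}). Finally, note that the hypothesis $|(x_0,0)-(x,\lambda)|^2\le A|t-t_0|$ does not place $(x,t,\lambda)$ ``well above'' the boundary --- $\lambda$ may be arbitrarily small --- so interior Harnack chains alone cannot transport the estimate to $(x,t,\lambda)$; this is precisely why the backward Harnack/boundary comparison machinery of \cite{FGS} is the substantive ingredient, not a detail to be quoted in passing.
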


\begin{Lem}[Harnack's inequality]\label{intharnack}
Let $\Omega$ be a convex domain in $\R^{n+1}. $If $\HH u=0$ in $\Omega\times(t_0,T_0)$ and $u\ge 0$ in $\Omega\times(t_0,T_0)$, then if $(y,\sigma),\; (x,\lambda)\in \Omega$ and $t_0<s<t<T_0$,
\begin{equation*}
u(y,s,\sigma) \le Cu(x,t,\lambda)\exp\left(C\frac{|x-y|^2+|\lambda-\sigma|^2}{t-s}+\frac{t-s}{R}+1\right), 
\end{equation*}
where $R=\min\{\text{dist}(x,\partial \Omega)^2,\text{dist}(y,\partial \Omega)^2,s-t_0,1\}$. 
\end{Lem}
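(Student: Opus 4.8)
The plan is to obtain this quantitative statement from the scale-invariant \emph{local} parabolic Harnack inequality by chaining it along the segment joining the two points; convexity of $\Omega$ is precisely what keeps the chain inside the domain.

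First I would isolate the local ingredient. For a nonnegative weak solution $v$ of $\HH v=0$ on a backward parabolic cylinder $B_{2r}(z_0)\times(\theta-4r^2,\theta)\subset\R^{n+1}\times\R$, Moser's Harnack inequality provides a constant $C_0=C_0(n,\Lambda)\ge 1$, \emph{independent of $r$ and of $z_0$}, such that
\[
v(z_1,\theta_1)\le C_0\,v(z_2,\theta_2)
\]
whenever $z_1,z_2\in B_r(z_0)$, $\theta-3r^2\le\theta_1\le\theta-2r^2$ and $\theta-r^2\le\theta_2\le\theta$; this is classical (see \cite{FGS} and the references therein) and uses only the ellipticity \eqref{eq:Aellip}.

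Next I would build the chain. Writing $X=(x,\lambda)$, $Y=(y,\sigma)$, $d=|X-Y|$ and $\tau=t-s>0$, I would set $N:=\lceil C_1(d^2/\tau+\tau/R+1)\rceil$ for a suitable absolute constant $C_1$, together with
\[
Z_k:=Y+\tfrac{k}{N}(X-Y),\qquad s_k:=s+\tfrac{k}{N}\tau,\qquad r^2:=\frac{\tau}{2N},\qquad k=0,\dots,N.
\]
This choice of $N$ is engineered so that three things hold simultaneously: (i) $N\gtrsim d^2/\tau$ makes the spatial jump $|Z_{k+1}-Z_k|=d/N$ smaller than $r$, so $Z_k$ and $Z_{k+1}$ lie in the common ball $B_r(Z_k)$; (ii) the time increment $s_{k+1}-s_k=\tau/N=2r^2$ lies in the admissible window $[r^2,3r^2]$ of the local inequality; (iii) $N\gtrsim\tau/R$ forces $r^2\le R$, so that $B_{2r}(Z_k)\subset\Omega$ and the backward cylinders $B_{2r}(Z_k)\times(s_{k+1}-4r^2,s_{k+1})$ stay inside $\Omega\times(t_0,T_0)$ (here the term $s-t_0$ in $R$ and the cap at $1$ are used, while $s_{k+1}\le t<T_0$ is automatic). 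Convexity enters exactly at this point: $[Y,X]\subset\Omega$, and since $\operatorname{dist}(\cdot,\partial\Omega)$ is concave on the convex set $\Omega$, one has $\operatorname{dist}(Z_k,\partial\Omega)\ge\min\{\operatorname{dist}(X,\partial\Omega),\operatorname{dist}(Y,\partial\Omega)\}\ge\sqrt R$ for every $k$, which is what makes $B_{2r}(Z_k)\subset\Omega$. Applying the local Harnack inequality to each link (with $z_0=Z_k$, $\theta=s_{k+1}$, $z_1=Z_k$, $z_2=Z_{k+1}$) gives $u(Z_k,s_k)\le C_0\,u(Z_{k+1},s_{k+1})$, and iterating over $k=0,\dots,N-1$ yields
\[
u(y,s,\sigma)=u(Z_0,s_0)\le C_0^{\,N}\,u(Z_N,s_N)=C_0^{\,N}\,u(x,t,\lambda).
\]
Since $C_0^{\,N}=\exp(N\log C_0)$ and $N\lesssim d^2/\tau+\tau/R+1$, this is the asserted bound, with $C$ depending only on $n$ and $\Lambda$ (the leading constant in the statement may be taken to be $1$).

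The only non-routine point is the bookkeeping behind the choice of $N$ and $r$: the single integer $N$ must at once make each link sub-diffusive ($N\gtrsim d^2/\tau$), keep every cylinder's scale below $\sqrt R$ and inside the time slab ($N\gtrsim\tau/R$, where the $s-t_0$ entry of $R$ and the cap at $1$ play their role), and keep consecutive times inside the admissible window of the local inequality; once $N$ is fixed, all the geometric containments are checked using convexity. The degenerate regimes ($d=0$, or $\tau\lesssim R$ so that $N\sim 1$) are absorbed by the $+1$ in the definition of $N$, which is precisely why it appears there, mirroring the $+1$ inside the exponential of the statement.
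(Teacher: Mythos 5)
Your chaining argument is correct: with $N\simeq |X-Y|^2/(t-s)+(t-s)/R+1$ and $r^2=(t-s)/(2N)$, each backward cylinder $B_{2r}(Z_k)\times(s_{k+1}-4r^2,s_{k+1})$ does lie in $\Omega\times(t_0,T_0)$ (concavity of $\operatorname{dist}(\cdot,\partial\Omega)$ on the convex set $\Omega$ gives $\operatorname{dist}(Z_k,\partial\Omega)\ge\sqrt{R}$, and $N\gtrsim (t-s)/R$ gives $4r^2\le R$ and $s_1-4r^2\ge t_0$), so iterating the scale-invariant local Moser--Harnack inequality gives $u(y,s,\sigma)\le C_0^{N}u(x,t,\lambda)$, which is the asserted estimate. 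For comparison, the paper offers no proof of Lemma \ref{intharnack} at all—it is quoted from \cite{FGS} and the references therein—and your Harnack-chain argument is essentially the classical proof underlying that citation, so there is no methodological divergence to report.
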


\begin{Lem}\label{harnack}
If $\HH u=0$ in $T_{4r}(x_0,t_0)$ and $u\ge 0$ in $T_{4r}(x_0,t_0)$, then  
\begin{equation*}
u(x,t,\lambda) \le Cu(x_0,t_0+2r^2,r), \quad\text{for all }(x,t,\lambda)\in T_r(x_0,t_0).
\end{equation*}
\end{Lem}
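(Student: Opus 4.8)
\emph{Proof proposal.} The assertion is purely local, so the periodicity and Dini hypotheses on $A$ play no role here; only the uniform ellipticity \eqref{eq:Aellip} is used, and the natural tool is the interior Harnack inequality of Lemma~\ref{intharnack}. I would first normalise by the parabolic dilation and translation $(X,t)\mapsto\big((X-(x_0,0))/r,(t-t_0)/r^2\big)$: this turns $u$ into a nonnegative solution of an equation of the same type (with a rescaled, still $\Lambda$-elliptic matrix) in $T_4(0,0)$, and since the constant in Lemma~\ref{intharnack} depends only on $n,\Lambda$, it suffices to prove $u(x,t,\lambda)\le C\,u(0,2,1)$ for $(x,t,\lambda)\in T_1(0,0)$.

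I would then split $T_1(0,0)$ into a bulk part $\{\lambda\ge 1/8\}$ and a near-bottom part $\{0<\lambda<1/8\}$. On the bulk, join a generic $P=(x,t,\lambda)$ to $P_\ast:=(0,2,1)$ by a chain $P=P_0,\dots,P_N=P_\ast$ of uniformly bounded length $N=N(n)$, with $P_k=(x_k,t_k,\lambda_k)$ chosen so that $\lambda_k\ge 1/16$, $|x_k|\le 2$, $t_k$ strictly increasing with $t_{k+1}-t_k$ comparable to $1$, and $|x_{k+1}-x_k|+|\lambda_{k+1}-\lambda_k|\le C$; all the $P_k$ and the segments joining them sit in the fixed convex cylinder $\big(\{|x|<3\}\times(1/32,4)\big)\times(-2,3)\subset T_4(0,0)$, where $u\ge 0$ solves $\HH u=0$. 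Applying Lemma~\ref{intharnack} to each consecutive pair — for which the exponential factor is bounded, since the spatial gap squared over the time gap is $O(1)$ and $R\gtrsim 1$ for interior points — gives $u(P_k)\le C\,u(P_{k+1})$, and composing the $N$ estimates yields $u(P)\le C\,u(P_\ast)$. In particular this bounds $u$ at a fixed corkscrew point $A_\ast:=(0,1/2,1/4)$ by $C\,u(P_\ast)$.

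The near-bottom part is the genuine difficulty, and this is where the boundary behaviour of $u$ must enter: extending the Harnack chain down to a point with small $\lambda$ needs $\sim\log(1/\lambda)$ links, hence would only give the useless bound $(1/\lambda)^{\log C}u(P_\ast)$. As in the classical parabolic Carleson estimate, one instead uses that $u$ vanishes continuously on the bottom face $Q_4(0,0)\times\{0\}$ (the setting in which this lemma is applied; cf.\ the hypothesis attached to \eqref{eq:loc}) and compares $u$ with $\LL$-caloric measure: for $(x,t,\lambda)$ near the bottom one bounds $u(x,t,\lambda)$ by $C\,u(A_\ast)\,\omega^{(x,t,\lambda)}(\Delta)/\omega^{A_\ast}(\Delta)$ for a suitable unit-size surface ball $\Delta$ on the bottom face, which follows from Lemma~\ref{Gomega} together with the doubling property \eqref{doubling}; since $\omega^{(x,t,\lambda)}(\Delta)\le 1$ and $\omega^{A_\ast}(\Delta)\ge c>0$ by non-degeneracy of caloric measure at a corkscrew point, and $u(A_\ast)\le C\,u(P_\ast)$ by the previous paragraph, we conclude $u(x,t,\lambda)\le C\,u(P_\ast)$. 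I expect the main obstacle to be the correct handling of the parabolic time-lag in this last step (choosing $\Delta$, its centre in time, and the scale $\rho$ so that the hypotheses of Lemma~\ref{Gomega} are met); the bulk Harnack chain is routine.
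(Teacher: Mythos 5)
You are right that this lemma is the parabolic Carleson estimate and that, as literally stated, it needs the tacit hypothesis that $u$ vanishes continuously on the bottom face $Q_{4r}(x_0,t_0)\times\{0\}$ (this is how it is stated in \cite{FGS} and how it is used in every application in the paper; without it the half-space heat Poisson kernel with pole at a point of $Q_r(x_0,t_0)\times\{0\}$ is a counterexample). Note, however, that the paper offers no proof to compare with: Lemmas \ref{Gomega}--\ref{comp} are all quoted from \cite{FGS} and the references therein. Your rescaling and the treatment of the bulk region $\{\lambda\ge 1/8\}$ by Lemma~\ref{intharnack} (a chain, or in fact a single application in a fixed convex cylinder) are correct and routine.

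The gap is exactly at the step you yourself flag as the real difficulty. The inequality $u(x,t,\lambda)\le C\,u(A_\ast)\,\omega^{(x,t,\lambda)}(\Delta)/\omega^{A_\ast}(\Delta)$ does \emph{not} follow from Lemma~\ref{Gomega} together with \eqref{doubling}: Lemma~\ref{Gomega} compares caloric measure with the Green function and says nothing about an arbitrary nonnegative solution $u$ vanishing on $\Delta$. Domination of $u$ by caloric measure is one half of the boundary comparison principle, and its standard proof first establishes $u\le Cu(A_\ast)$ on a slightly larger box --- i.e.\ the very Carleson estimate being proved --- and only then applies the maximum principle; so, as written, your argument is circular. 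Nor can one instead run the maximum principle in a unit box around a near-bottom point, because on the lateral faces of that box near the edge $\{\lambda=0\}$ there is again no available bound for $u$ except the Carleson estimate itself (Harnack chains to $P_\ast$ degenerate there, as you observed). The genuine proof (in \cite{FGS} and its references, following the elliptic Caffarelli--Fabes--Mortola--Salsa scheme) combines the boundary oscillation decay furnished by \eqref{eq:Holder} and \eqref{eq:MoserBoundary} (valid precisely because $u=0$ on the bottom face) with interior Harnack in an iteration/contradiction argument: if $u\ge N\,u(P_\ast)$ at some point of $T_1$ with $N$ large, one produces a sequence of points marching toward $\{\lambda=0\}$ along which $u$ grows geometrically, contradicting local boundedness, the parabolic time-lag being absorbed because each step shifts time by the square of the spatial displacement. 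Alternatively, within this paper's framework the estimate follows in one line from Lemma~\ref{comp} applied to $u$ and $v:=1-\omega(\cdot,Q_{4r}(x_0,t_0))$, using $v\le1$ and a lower bound for $v$ at the backward corkscrew point; but since the proof of Lemma~\ref{comp} in \cite{FGS} itself rests on the Carleson estimate, that is a consistency check rather than an independent proof.
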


\begin{Lem}\label{comp}
Suppose $u$ and $v$ are non-negative solutions to $\HH u=0$ in $T_{4r}(x_0,t_0)$, continuous in $\overline{T_{4r}(x_0,t_0)}$ and that $u=v=0$ on $Q_{2r}(x_0,t_0)$. 
Then 
\begin{equation*}
\frac{u(x,t,\lambda)}{v(x,t,\lambda)}\le C\frac{u(x_0,t_0+2r^2,r)}{v(x_0,t_0-2r^2,r)}, \quad\text{for all }(x,t,\lambda)\in T_r(x_0,t_0).
\end{equation*}
If $u$ satisfies $\HH^*u=0$ in $T{4r}(x_0,t_0)$, then 
\begin{equation*}
\frac{u(x,t,\lambda)}{v(x,t,\lambda)}\le C\frac{u(x_0,t_0-2r^2,r)}{v(x_0,t_0+2r^2,r)}, \quad\text{for all }(x,t,\lambda)\in T_r(x_0,t_0).
\end{equation*}
\end{Lem}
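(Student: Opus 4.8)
Lemma \ref{comp} is the parabolic boundary comparison principle, and I would prove it by splitting it into two matching one--sided estimates. Set $A^{+}:=(x_0,t_0+2r^2,r)$ and $A^{-}:=(x_0,t_0-2r^2,r)$, and fix an auxiliary cylinder $U:=Q_\rho(x_0,t_0)\times(0,2r)$ with $\rho$ slightly larger than $r$ --- small enough that $T_r(x_0,t_0)\subset U\subset T_{4r}(x_0,t_0)$, that $u$ and $v$ vanish on the flat face $\Gamma_0:=\overline{Q_\rho(x_0,t_0)}\times\{0\}$, and that all the Harnack arguments below run forward in time within $T_{4r}(x_0,t_0)$. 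Write $\Gamma_1:=\partial_P U\setminus\Gamma_0$ for the ``inner'' part of the parabolic boundary and $\Sigma:=\{(x,t,\lambda)\in\Gamma_1:\lambda\ge r/2\}$ for the portion of it bounded away from $\Gamma_0$. The aim is to show, for every $X\in T_r(x_0,t_0)$,
\[
u(X)\ \le\ C\,u(A^{+})\;\omega_U^{X}(\Gamma_1),\qquad
v(X)\ \ge\ c\,v(A^{-})\;\omega_U^{X}(\Sigma),
\]
where $\omega_U^{X}$ is the $\LL$--caloric measure of $U$ with pole $X$, and then to check $\omega_U^{X}(\Gamma_1)\le C\,\omega_U^{X}(\Sigma)$ for $X\in T_r(x_0,t_0)$; dividing the two inequalities gives the assertion. (If $v\equiv0$ there is nothing to prove, so assume $u,v>0$ in the interior of $T_{4r}(x_0,t_0)$.)

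For the upper estimate on $u$: since $u=0$ on $\Gamma_0$, the representation formula for caloric measure gives $u(X)=\int_{\Gamma_1}u\,d\omega_U^{X}$, so it suffices to bound $u$ on $\Gamma_1$ by $C\,u(A^{+})$. At a point of $\Gamma_1$ at height $\lambda\gtrsim r$ this follows from Lemma \ref{harnack} applied on a parabolic cube of size $\sim r$ contained in $T_{4r}(x_0,t_0)$, followed by a short forward--in--time Harnack chain (Lemma \ref{intharnack}) from the resulting reference point to $A^{+}$; the choice of $\rho$ close to $r$ ensures that every time occurring on $\Gamma_1$ precedes the time $t_0+2r^2$ of $A^{+}$, so the chain does run forward. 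At a point of $\Gamma_1$ with $\lambda$ small one applies Lemma \ref{harnack} again, at a scale $\sim r$ centered near that point (it only uses that $u\ge0$ in a cube of size $\sim r$ about the point lying in $T_{4r}(x_0,t_0)$, so the value of $u$ at $\lambda=0$ is irrelevant), and once more chains to $A^{+}$. Thus $\sup_{\Gamma_1}u\le C\,u(A^{+})$, which is the first displayed inequality.

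For the lower estimate on $v$: dropping the part of $\Gamma_1$ near $\Gamma_0$, $v(X)\ge\int_{\Sigma}v\,d\omega_U^{X}$, so it suffices to bound $v$ from below on $\Sigma$. Every point of $\Sigma$ lies at height $\lambda\in[r/2,2r]$ and, by the choice of $\rho$, at a time strictly after $t_0-2r^2$; a forward--in--time Harnack chain (Lemma \ref{intharnack}) inside the region $\{r/4<\lambda<4r\}\cap T_{4r}(x_0,t_0)$, where $v$ is caloric and strictly positive, connects $A^{-}$ to any such point and yields $v\ge c\,v(A^{-})$ on $\Sigma$, hence the second displayed inequality. It is exactly here that the \emph{past} point $A^{-}$ is forced: positivity of a caloric function propagates forward in time, so the natural lower bound refers to an earlier reference point, whereas the Carleson--type upper bound for $u$ refers to a later one. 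This asymmetry, which has no elliptic analogue, is the source of the time--lag $\pm2r^2$ in the statement.

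It remains to compare $\omega_U^{X}(\Gamma_1)$ with $\omega_U^{X}(\Sigma)$ for $X\in T_r(x_0,t_0)$. Since every point of $T_r(x_0,t_0)$ lies at parabolic distance $\gtrsim r$ from all of $\Gamma_1$ and is close only to $\Gamma_0$, the functions $\omega_U^{(\cdot)}(\Gamma_1)$ and $\omega_U^{(\cdot)}(\Sigma)$ are non--negative caloric in $U$, vanish continuously on $\Gamma_0$, and differ by $\omega_U^{(\cdot)}(\Gamma_1\setminus\Sigma)$. Identifying each of them, via Lemma \ref{Gomega} together with the upper bound \eqref{greenestimate} and the lower bound coming from Lemma \ref{intharnack} for Green's function, with a multiple of the Green function of $U$ with pole at a corkscrew position of $U$ at scale $\sim r$, and using the doubling property \eqref{doubling} of caloric measure to absorb the discrepancy between the flat surface balls that appear, one obtains $\omega_U^{X}(\Gamma_1)\le C\,\omega_U^{X}(\Sigma)$ on $T_r(x_0,t_0)$. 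Combining the three inequalities proves the first assertion. The adjoint statement follows by applying it to $\widetilde u(x,t,\lambda):=u(x,-t,\lambda)$ and $\widetilde v(x,t,\lambda):=v(x,-t,\lambda)$, which (as $A$ is time--independent) are $\HH$--caloric in the time--reflected cylinder, vanish on the reflected flat face, and have corkscrews $(x_0,-t_0\pm2r^2,r)$ matching $(x_0,t_0\mp2r^2,r)$, so the two time--lags get interchanged. The main obstacle is precisely this last step --- making the two one--sided bounds genuinely match despite the opposite time--lags of $A^{+}$ and $A^{-}$; the complete details are in \cite{FGS}.
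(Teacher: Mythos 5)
Your overall skeleton is the classical one (a Carleson-type bound of $u$ on a reference box by $u(A^+)$, a lower bound of $v$ on a ``far'' set via forward Harnack chains from $A^-$, and a comparison of the two caloric measures), and your explanation of why the time-lag forces the later corkscrew in the numerator and the earlier one in the denominator is correct; note, however, that the paper itself offers no proof of Lemma \ref{comp} -- it is quoted from \cite{FGS} -- so the comparison can only be with the argument there. Measured against that, your sketch has a genuine gap at its crux: the inequality $\omega^X_U(\Gamma_1)\le C\,\omega^X_U(\Sigma)$, uniformly for $X\in T_r(x_0,t_0)$, is asserted rather than proved, and the tools you invoke do not deliver it. Lemma \ref{Gomega} compares Green's function with the caloric measure of surface cubes \emph{on the bottom face} $\{\lambda=0\}$ only; it gives no control of the caloric measure of lateral, top or initial-time portions of $\partial_P U$ such as $\Gamma_1$ or $\Sigma$, and \eqref{greenestimate} together with \eqref{doubling} does not bridge this, since comparing two nonnegative caloric functions that both vanish on $\Gamma_0$ is a problem of the same depth as the lemma itself. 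The classical route (CFMS/Salsa, followed in \cite{FGS}) avoids this step altogether: one dominates $u$ by $C\,u(A^+)\,r^{n+1}G_U(\cdot,\hbox{corkscrew})$ via the maximum principle on $U$ minus a cap around the pole (using \eqref{greenestimate} and \eqref{Gpositive}), bounds $v$ from below by the analogous quantity with $A^-$, and lets the Green function/caloric measure of \emph{bottom} cubes -- exactly what Lemma \ref{Gomega} controls -- serve as the common intermediary.

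The reduction of the adjoint statement is also incorrect as written. In the second part of Lemma \ref{comp}, $u$ is adjoint $\LL$-caloric while $v$ is still $\LL$-caloric; after $t\mapsto-t$ the pair becomes ($\tilde u$ caloric, $\tilde v$ \emph{adjoint} caloric), again a mixed pair, so the first part (which requires both functions caloric) cannot be applied to it -- your claim that both reflected functions are $\HH$-caloric is false for $\tilde v$. The mixed case has to be argued separately, using the adjoint caloric measure and adjoint Carleson estimate for $u$ and the symmetry/time-invariance \eqref{rearrange} of $G$ to exchange the two slots of the Green function; this is precisely why the paper highlights \eqref{rearrange}, and why, e.g., in Lemma \ref{Prop:equiv2.3-2.4} only the adjoint function is reflected before the comparison principle is applied. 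A minor further point: your parenthetical claim that the Carleson estimate (Lemma \ref{harnack}) uses no information about $u$ at $\lambda=0$ misreads that lemma -- such an estimate is false for general nonnegative solutions and the intended hypothesis is vanishing on the bottom cube -- though in your application this is harmless since $u=0$ on $Q_{2r}(x_0,t_0)$ is available.
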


Let $1 \leq p < \infty$. We say that $u$ is \textit{locally H\"older continuous} in a domain $D$ if there exist constants $C>0$ and $0<\alpha<1$ verifying 
\begin{equation}\label{eq:Holder}
	|u(X,t)-u(Y,s)|
    	\leq C \Big( \frac{\|(X-Y,t-s)\|}{r} \Big)^\alpha \Big( \mean{2\widetilde{Q}} |u|^p \Big)^{1/p},
        \quad (X,t), (Y,s) \in \widetilde{Q},
\end{equation}
for every parabolic cube $\widetilde{Q}:=\widetilde{Q}_r \subset \R^{n+2}$ such that  $2\widetilde{Q} :=\widetilde{Q}_{2r}\subset D$. Moreover, any $u$ satisfying \eqref{eq:Holder} also satisfies \textit{Moser's local estimate} 
\begin{equation}\label{eq:Moser}
	\sup_{\widetilde{Q} }|u|
    	\leq C \Big( \mean{2\widetilde{Q}} |u|^p \Big)^{1/p}.
\end{equation}
By the classical \textit{De Giorgi-Moser-Nash theorem}
(\cite{Nash}) any solution of $\HH u=0$ in 
$2\widetilde{Q}$, verifies both estimates \eqref{eq:Holder} and \eqref{eq:Moser}.
This is true for any real matrix $A$ satisfying \eqref{eq:Aellip}, without extra regularity needed.
Additionally, if $D$ is a time-independent Lipschitz domain and $2\widetilde Q\cap\partial D\neq\emptyset$ and if $u=0$ on $2\widetilde Q\cap\partial D$, then 
\begin{equation}\label{eq:MoserBoundary}
	\sup_{\widetilde{Q}\cap  D }|u|
    	\leq C \Big( \mean{2\widetilde{Q}\cap D} |u|^p \Big)^{1/p},
\end{equation}
and \eqref{eq:Holder} holds for $(X,t), (Y,s)\in \widetilde Q\cap D$. 
It is well known that if \eqref{eq:Holder} or \eqref{eq:Moser} hold for one single value of $p$, then they hold for all $1 \leq p < \infty$. We remark that \eqref{eq:Holder}-\eqref{eq:MoserBoundary} also holds for solutions to $\HH^* u=0$.

Let $\omega$ be the $\LL$-caloric measure of the domain $\R^{n+2}_+$. The nonnegative function 
\[v(X,t) = 1-\omega(X,t,Q_{r}(x_0,t_0))\] is $\LL$-caloric in $\R^{n+2}_+$, vanishes on $Q_{r}\times\{0\}$ and hence is H\"older continuous on $\overline{T_{r/2}(x_0,t_0)}$. It easily follows that there is a constant $0<\gamma<1$ such that $\omega(X,t,Q_{r}(x_0,t_0))\ge \frac12$ if $(X,t)\in T_{\gamma r}(x_0,t_0)$. By Harnack's inequality, there exists $c_0>0$ such that  
\begin{equation}\label{omegapositive}\omega(x,t,\lambda,Q_{r})\ge c_0,\end{equation}
if $(x,t,\lambda)$ satisfies $\lambda>\gamma r$ and $|x-x_0|^2+\lambda^2\le C_1(t-t_0)\le C_2r^2$ for some $C_1$ and $C_2$, with $c_0$ depending on $C_1$ and $C_2$. In view of Lemma \ref{Gomega}, we get that if 
\[\lambda>\gamma r\text{ and }|(x_0,0)-(x,\lambda)|^2\le A(t-t_0)\le 10Ar^2,\] 
then there is a positive constant $c$ such that 
\begin{equation}\label{Gpositive}G(x,t,\lambda;x_0,t_0,r)\ge cr^{-n-1}. \end{equation}


\begin{Lem}\label{RHsolv}
The reverse H\"older inequality holds if and only if the Dirichlet problem is solvable in $L^p$, in the sense of Definition \ref{solvability}. 
\end{Lem}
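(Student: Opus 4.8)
\emph{Strategy.} The plan is to prove the two implications separately, both via the Poisson-kernel representation $u(Z,\tau)=\int_{\R^{n+1}}K(Z,\tau;y,s)\,f(y,s)\,dy\,ds$ and the classical interplay between the $\LL$-caloric measure $\omega^{Z,\tau}$, its density $K$, the nontangential maximal function, and the Harnack/Moser estimates of Section~\ref{lemmas}; this is the parabolic analogue of a well-known elliptic equivalence, cf.\ \cite{FGS}. I use freely Lemmas~\ref{Gomega} and \ref{comp}, the doubling property \eqref{doubling}, and the positivity \eqref{omegapositive}.

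\emph{Solvability $\Rightarrow$ reverse H\"older.} By Definition~\ref{solvability} there is an exponent $p_0\in(2-\delta,2)$ with $\|N(v_g)\|_{L^{p_0}(\R^{n+1})}\le C\|g\|_{L^{p_0}(\R^{n+1})}$ for every $g\in C_c(\R^{n+1})$, $v_g$ the corresponding solution. Fix a boundary cube $Q=Q_r(x,t)$ and, for a large absolute constant $c_*$, the corkscrew pole $(Z_0,\tau_0)$ with $Z_0=(x,r)$, $\tau_0=t+c_*r^2$; it is admissible for $Q$ in the sense of \eqref{eq:revhold}, and \eqref{omegapositive} gives $\omega^{Z_0,\tau_0}(Q)\ge c_0>0$. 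For $0\le g\in C_c(Q)$ one has $v_g\ge0$, and there is a parabolic cube $\Delta$ of side $\sim r$ around $(x,\tau_0)$ with $(Z_0,\tau_0)\in\Gamma^\eta(y,s)$ for all $(y,s)\in\Delta$, hence $v_g(Z_0,\tau_0)\le N(v_g)(y,s)$ there. Averaging over $\Delta$ and applying H\"older and the solvability estimate (and $|\Delta|\approx|Q|$),
\[
\int_Q g\,K(Z_0,\tau_0;\cdot)=v_g(Z_0,\tau_0)\le\Big(\mean{\Delta}N(v_g)^{p_0}\Big)^{1/p_0}\le C|Q|^{-1/p_0}\|g\|_{L^{p_0}}=C\Big(\mean{Q}|g|^{p_0}\Big)^{1/p_0}.
\]
Taking the supremum over such $g$ gives, by $L^{p_0}$--$L^{p_0'}$ duality on $Q$, $\big(\mean{Q}K(Z_0,\tau_0;\cdot)^{p_0'}\big)^{1/p_0'}\le C|Q|^{-1}$; combined with $\mean{Q}K(Z_0,\tau_0;\cdot)=\omega^{Z_0,\tau_0}(Q)/|Q|\ge c_0|Q|^{-1}$ this is a reverse H\"older inequality of exponent $p_0'>2$, which a fortiori yields \eqref{eq:revhold} for the corkscrew pole. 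The case of a general admissible $(Z,\tau)$ follows by transferring the bound through the comparison estimates for kernel functions obtained from Lemmas~\ref{Gomega}, \ref{comp} and \eqref{doubling}.

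\emph{Reverse H\"older $\Rightarrow$ solvability.} By Gehring's lemma, \eqref{eq:revhold} self-improves to \eqref{eq:revholdalpha}, i.e.\ $K$ satisfies a uniform reverse H\"older bound of some exponent $\alpha>2$ on parabolic cubes; set $\delta:=2-\alpha'\in(0,1)$. Fix $f\in C_c(\R^{n+1})$, $2-\delta<p<\infty$, and estimate $u(Z,\tau)=\int K(Z,\tau;\cdot)f$ for $(Z,\tau)\in\Gamma^\eta(x_0,t_0)$. Since $A$ is time-independent, $K(Z,\tau;\cdot)$ is supported in $\{s<\tau\}$; by the size bound \eqref{greenestimate}, Lemma~\ref{Gomega}, and the Gaussian decay of the fundamental solution, $K(Z,\tau;\cdot)$ is bounded and decays rapidly outside the parabolic ball of radius $\sim\lambda_Z$ about the shadow of $(Z,\tau)$ (with $\lambda_Z$ its $\lambda$-coordinate). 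Split $\{s<\tau\}$ into that ball — on which the size bound gives a contribution $\lesssim\mathcal M_{\alpha'}f(x_0,t_0)$ — and a family of parabolic cubes shifted backward in time by $\sim(\text{side})^2$ so that $(Z,\tau)$ is admissible for each in the sense of \eqref{eq:revholdalpha}; on each such cube $B$, H\"older's inequality and \eqref{eq:revholdalpha} yield $\int_BK(Z,\tau;\cdot)|f|\le\|K(Z,\tau;\cdot)\|_{L^\alpha(B)}\|f\|_{L^{\alpha'}(B)}\le C\,\omega^{Z,\tau}(B)\,\big(\mean{B}|f|^{\alpha'}\big)^{1/\alpha'}$, the Gaussian-small remainder being negligible. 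Summing and using $\omega^{Z,\tau}(\R^{n+1})=1$ produces the pointwise bound $N(u)(x_0,t_0)\le C\,\mathcal M_{\alpha'}f(x_0,t_0)$, where $\mathcal M_{\alpha'}g:=\big(\mathcal M(|g|^{\alpha'})\big)^{1/\alpha'}$ and $\mathcal M$ is a (time-lagged, iterated) parabolic Hardy--Littlewood maximal operator, bounded on every $L^q$ with $q>1$. Hence $\|N(u)\|_{L^p}\le C\|\mathcal M_{\alpha'}f\|_{L^p}=C\|\mathcal M(|f|^{\alpha'})\|_{L^{p/\alpha'}}^{1/\alpha'}\le C\|f\|_{L^p}$ because $p/\alpha'>1$, which is the solvability estimate of Definition~\ref{solvability}.

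\emph{The main obstacle.} The conceptual content here is classical; the genuinely parabolic difficulty is the time-lag built into \eqref{eq:revhold} and \eqref{eq:revholdalpha}. Unlike in the elliptic setting, neither the corkscrew cube under an interior pole nor its dyadic dilates may be centered at the pole's time — each must be displaced backward in time by an amount comparable to the square of its side length before the reverse H\"older inequality becomes available. Consequently the maximal operator that naturally dominates $N(u)$ is not the usual parabolic one but a one-sided/iterated, time-lagged variant, and one must verify that it is still of weak type $(1,1)$ (hence $L^q$-bounded for $q>1$), for instance through a Vitali-type covering argument for this family of cubes or by realizing it as a bounded superposition of shifts of the standard operator. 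The same lag is what forces the correct placement of the averaging cube $\Delta$ in the first implication, and the separate, size-based treatment in the second of the portion of spacetime lying near the pole's own time, where no admissible reverse H\"older cube exists.
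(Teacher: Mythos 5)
Your proposal is correct in outline and follows essentially the same classical route as the paper: both directions run through the kernel representation, the positivity \eqref{omegapositive} of caloric measure, and a maximal-function bound, with the time lag as the only genuinely parabolic wrinkle. The differences are in execution. For solvability $\Rightarrow$ \eqref{eq:revhold} you dualize against boundary data $g$ and average $N(v_g)$ over a forward-shifted cube, while the paper bounds the solution directly at an admissible pole via Moser's estimate \eqref{eq:Moser} and the shadow cube; the two are interchangeable, and both yield the reverse H\"older bound only at special poles, so both need the same upgrade: the paper notes that the restricted inequality already gives the pointwise bound \eqref{Nu} and then cites Lemma 2.1 and Theorem 3.1 of \cite{FS}, whereas you appeal (without proof) to change-of-pole comparisons via Lemmas \ref{Gomega}, \ref{comp} and \eqref{doubling} --- same standard type of step, stated rather than carried out. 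For \eqref{eq:revhold} $\Rightarrow$ solvability the paper's handling of the time lag is cleaner than yours: taking $f\ge0$, it shifts the \emph{pole} forward in time by Harnack, $K(x,t,r;y,s)\le C\,K(x,t+4r^2,2r;y,s)$ on $Q_r(x,t)$ (and analogously on the dyadic annuli), so ordinary centered parabolic cubes and the standard Hardy--Littlewood maximal function suffice, and the summability of the far-field contributions is exactly \cite[Lemma 2.1]{FS}. Your variant --- shifting the cubes backward in time --- also works, but the ``time-lagged maximal operator'' you single out as the main obstacle is a non-issue, since averages over the lagged cubes are dominated by averages over dilates of cubes centered at $(x_0,t_0)$; the genuinely delicate point, which you dispatch with ``Gaussian decay of the fundamental solution, remainder negligible,'' is precisely the kernel-decay estimate of \cite[Lemma 2.1]{FS}: note that \eqref{greenestimate} and Aronson-type bounds control $G$, not directly $K=\lim_{\lambda\to0}G/\lambda$, so a boundary estimate (or the cited lemma) is indispensable there.
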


\begin{proof}
We will prove that if the reverse H\"older inequality holds for $\alpha>2$, then for any $(x,t)\in\R^{n+1}$ and any $r>0$, 
\begin{equation}\label{Nu}
u(x,t,r)\le C(\M(f^\beta)(x,t))^{1/\beta}, 
\end{equation}
uniformly in $(x,t,r)$, where $\frac{1}{\beta}+\frac{1}{\alpha}=1$. We may clearly assume that $f\ge0$ and hence $u\ge 0$. Thus Harnack's inequality implies that $Nu(x,t)\le C(\M(f^\beta)(x,t))^{1/\beta}$. Choose $\delta>0$ so that $2-\delta=\beta$. Then if $p>2-\delta$ and $f\in L^p$, we obtain 
\[
\|Nu\|_{L^p(\R^{n+1})} \le C\|f\|_{L^p(\R^{n+1})},
\]
by the Hardy-Littlewood maximal function estimate. To prove \eqref{Nu}, we write 
\[
u(x,t,r) = \int_{Q_r(x,t)}K(x,t,r;y,s)f(y,s)dyds + \sum_{j=1}^{\infty}\int_{R_j}K(x,t,r;y,s)f(y,s)dyds, 
\]
where $R_j = Q_{2^jr}(x,t)\setminus Q_{2^{j-1}}(x,t)$. By Harnack's inequality, 
\[\int_{Q_r(x,t)}K(x,t,r;y,s)f(y,s)dyds\le \int_{Q_r(x,t)}K(x,t+4r^2,2r;y,s)f(y,s)dyds.\]
From the reverse H\"older inequality we obtain 
\begin{align} \label{eq:h1}
&\int_{Q_r(x,t)}K(x,t+4r^2,2r;y,s)f(y,s)dyds\\
& \qquad \le r^{n+2}C\left(\frac{1}{r^{n+2}}\int_{Q_r(x,t)}K^\alpha(x,t+4r^2,2r;y,s)\right)^{1/\alpha}\left(\frac{1}{r^{n+2}}\int_{Q_r(x,t)}|f|^\beta\right)^{1/\beta}\nonumber \\ 
&\qquad \le C\omega^{(x,t+4r^2,2r)}(Q_r(x,t))(\M(f^\beta)(x,t))^{1/\beta}\le C(\M(f^\beta)(x,t))^{1/\beta}. \nonumber 
\end{align}
Following the proof of \cite[Lemma 2.1]{FS}, it can be shown that 
\[
\int_{R_j}K(x,t,r;y,s)f(y,s)dyds \le c_j\int_{Q_{2^jr}(x,t)}K(x,t+4^{j+1}r,2^{j+1}r;y,s),
\]
for a sequence $c_j$ such that $\sum_{j=1}^\infty c_j<\infty$. Just like in \eqref{eq:h1} 
we can show that 
\[\int_{R_j}K(x,t,r;y,s)f(y,s)dyds\le Cc_j(\M(f^\beta)(x,t))^{1/\beta},\]
which proves \eqref{Nu}. 

To prove the converse, suppose $(x,t,\lambda)$ satisfies
\begin{equation}\label{weakRHdomain} 
|(x_0,\lambda)-(x,0)|\le |t-t_0|^{1/2},\; \lambda\ge 2r.\end{equation}
Let $f\in C_c(\R^{n+1})$ be a function supported in $Q_r(x_0,t_0)$ and $u$ the corresponding solution to the Dirichlet problem with boundary data $f$. Then 
$$u(x,t,\lambda) 
	= \int_{Q_r(x_0,t_0)}K(y,s)f(y,s)dyds,$$ 
where $K=K(x,t,\lambda; \cdot, \cdot)$.
By \eqref{eq:Moser}
\begin{align*}|u(x,t,\lambda)|&\le C\left(\frac{1}{r^{n+3}}\int_{C_r(x,t,\lambda)}|u|^\beta dYdt\right)^{1/\beta}
\le C\left(\frac{1}{r^{n+2}}\int_{Q_r(x,t)}|N(u)|^\beta dydt\right)^{1/\beta}\\
&\le \frac{C}{r^{(n+2)/\beta}}\left(\int_{Q_r(x,t_0)}|f|^\beta dyds\right)^{1/\beta}.
\end{align*}
Taking the supremum over all $f\in C_c(\R^{n+1})$ supported in $Q_r(x,t_0)$ with $L^\beta$-norm equal to $1$, we see that 
\begin{equation}\label{K1}
\left(\int_{Q_r(x_0,t_0)}|K|^\alpha dyds\right)^{1/\alpha}\le \frac{C}{r^{(n+2)/\beta}}.
\end{equation}
If we prove that 
\begin{equation}\label{K2}
\int_{Q_r(x_0,t_0)}K(y,s)dyds = \omega(x,t,\lambda,Q_r(x,t_0))\ge c_0, 
\end{equation}
then \eqref{K1} and \eqref{K2} imply that for all $(x,t,\lambda)$ satisfying \eqref{weakRHdomain},
\begin{equation}\label{weakRH}
\frac{\left(\frac{1}{r^{n+2}}\int_{Q_r(x_0,t_0)}|K(x,t,\lambda;y,s)|^\alpha dyds\right)^{1/\alpha}}{\frac{1}{r^{n+2}}\int_{Q_r(x_0,t_0)}K(x,t,\lambda;y,s)dyds}\le \frac{C}{c_0}.
\end{equation}
Since \eqref{K2} is a consequence of \eqref{omegapositive}, \eqref{weakRH} follows. \\ 

Note that \eqref{weakRH} is an apriori weaker statement than \eqref{eq:revhold} due to the restriction \eqref{weakRHdomain}. However, following the proof of \eqref{Nu}, we see that \eqref{weakRH} is in fact enough to prove \eqref{Nu}. By Lemma 2.1 and Theorem 3.1 in  \cite{FS}, \eqref{Nu} implies the reverse H\"older inequality \eqref{eq:revhold}, so \eqref{weakRH} and \eqref{eq:revhold} are actually equivalent. 
\end{proof}

\begin{Lem}\label{Prop:equiv2.3-2.4}
	Properties \eqref{eq:revhold} and \eqref{eq:loc} are equivalent.
\end{Lem}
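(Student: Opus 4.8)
The plan is to prove the two implications separately; both rest on expressing, for a nonnegative $\LL$-caloric function $u$ vanishing on a flat portion of $\partial\R^{n+2}_+$, the boundary quantity $\limsup_{\lambda\to0}|u(x,t,\lambda)/\lambda|$ in terms of the Poisson kernel $K$. The underlying identity is that, after passing to the limit as the cube shrinks in Lemma~\ref{Gomega}, for $(Z,\tau)\in\R^{n+2}_+$ with $\tau-t>0$ one has
\[
\limsup_{\mu\to0}\frac{G^*(x,t,\mu;Z,\tau)}{\mu}
  =\limsup_{\mu\to0}\frac{G(Z,\tau;x,t,\mu)}{\mu}\asymp K(Z,\tau;x,t),
\]
the time-lag $t\pm\rho^2$ appearing in Lemma~\ref{Gomega} being reabsorbed via Harnack's inequality (Lemma~\ref{harnack}) and the doubling property \eqref{doubling}. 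I regard establishing this identity cleanly — one must control $G$ across a time shift of order $\rho^2$ while dividing by $\rho$, which Hölder continuity alone cannot do — as the main obstacle; the remedy is to route every comparison through Lemma~\ref{harnack}, whose statement already incorporates a time-lag, keeping all reference points and radii of the ambient scale so that the accumulated lags stay bounded.

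For \eqref{eq:loc}$\Rightarrow$\eqref{eq:revhold}: fix $(x,t)\in\R^{n+1}$, $r>0$ and $(Z,\tau)\in\R^{n+2}_+$ with $|(x,0)-Z|^2\le|t-\tau|$ and $\tau-t\ge4r^2$. I would apply the adjoint form of \eqref{eq:loc} (legitimate by the change $t\mapsto-t$ noted in the text) to $v(y,s,\mu):=G^*(y,s,\mu;Z,\tau)$ on a cylinder $T_\rho(x,t)$ with $\rho$ comparable to $r$ and small enough that the pole lies outside $T_{4\rho}(x,t)$; by \eqref{doubling} the fixed loss of a factor in the radius is harmless, since \eqref{eq:revhold} with the constant $4$ is equivalent to the same inequality with any larger constant. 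The left-hand side of \eqref{eq:loc} then dominates $c\int_{Q_\rho(x,t)}|K(Z,\tau;\cdot)|^2$ by the displayed identity, while the right-hand side is controlled by estimating $\sup_{T_{2\rho}(x,t)}G^*(\cdot;Z,\tau)\le C\rho^{-n-1}\omega^{Z,\tau}(Q_\rho(x,t))$ through \eqref{eq:Moser}, Lemma~\ref{harnack}, Lemma~\ref{Gomega} and \eqref{doubling}, so that
\[
\frac{C}{\rho^{3}}\int_{T_{2\rho}(x,t)}|G^*(\cdot;Z,\tau)|^2
  \le \frac{C}{\rho^{3}}\,\rho^{n+3}\bigl(\rho^{-n-1}\omega^{Z,\tau}(Q_\rho)\bigr)^2
  =\frac{C}{|Q_\rho|}\Bigl(\int_{Q_\rho}K(Z,\tau;\cdot)\Bigr)^2 .
\]
Comparing the two displays yields \eqref{eq:revhold}.

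For \eqref{eq:revhold}$\Rightarrow$\eqref{eq:loc}: let $u$ be $\LL$-caloric in $T_{4r}(x_0,t_0)$ with $u(x,t,0)=0$ on $Q_{4r}(x_0,t_0)$. By Moser's estimates \eqref{eq:Moser}--\eqref{eq:MoserBoundary}, $\sup_{T_{3r/2}(x_0,t_0)}|u|\le C\bigl(|T_{2r}|^{-1}\int_{T_{2r}(x_0,t_0)}|u|^2\bigr)^{1/2}=:M$. Choosing a far pole, e.g.\ $(Z,\tau)=(x_0,t_0+100r^2,r)$, the function $G(\cdot;Z,\tau)$ is $\LL$-caloric in $T_{4r}(x_0,t_0)$, vanishes on $Q_{4r}\times\{0\}$, and satisfies $G(x_0,t_0-2r^2,r;Z,\tau)\ge cr^{-n-1}$ by \eqref{Gpositive}. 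A standard maximum-principle argument combined with \eqref{Gpositive}, the comparison principle (Lemma~\ref{comp}) and \eqref{eq:MoserBoundary} then gives $|u(x,t,\lambda)|\le C\,M\,r^{n+1}G(x,t,\lambda;Z,\tau)$ on $T_r(x_0,t_0)$ — the two-sided bound obtained by comparing the nonnegative $\LL$-caloric functions $CMr^{n+1}G(\cdot;Z,\tau)\pm u$ — whence, taking $\limsup_{\lambda\to0}(\cdot)/\lambda$ and using the opening identity,
\[
\limsup_{\lambda\to0}\Bigl|\frac{u(x,t,\lambda)}{\lambda}\Bigr|\le C\,M\,r^{n+1}K(Z,\tau;x,t),\qquad (x,t)\in Q_r(x_0,t_0).
\]
Squaring, integrating over $Q_r(x_0,t_0)$, applying \eqref{eq:revhold} to $K(Z,\tau;\cdot)$ (the pole being admissible for $Q_r(x_0,t_0)$) and using $\int_{Q_r}K(Z,\tau;\cdot)=\omega^{Z,\tau}(Q_r(x_0,t_0))\le1$ give $\int_{Q_r}|K(Z,\tau;\cdot)|^2\le C|Q_r|^{-1}$; inserting $M^2=C|T_{2r}|^{-1}\int_{T_{2r}}|u|^2$ the powers of $r$ cancel to exactly \eqref{eq:loc}. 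Apart from the time-lag bookkeeping flagged above, the only further point needing attention is this handling of the possible sign change of $u$.
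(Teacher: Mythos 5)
Your direction \eqref{eq:loc}$\Rightarrow$\eqref{eq:revhold} is essentially the paper's argument (test the adjoint form of \eqref{eq:loc} with $G^*(\cdot\,;Z,\tau)$, then bound $\sup G^*$ by $r^{-n-1}\omega^{Z,\tau}(Q_r)$ via Lemma \ref{harnack}, Lemma \ref{Gomega} and \eqref{doubling}) and is fine. The gap is in the direction \eqref{eq:revhold}$\Rightarrow$\eqref{eq:loc}, at the step ``$|u|\le C\,M\,r^{n+1}G(\cdot\,;Z,\tau)$ on $T_r$ by a standard maximum-principle argument.'' With the paper's convention, $G(X,t;Z,\tau)$ is the \emph{forward} Green function with pole at $(Z,\tau)$, so it vanishes identically for $t<\tau$; since you placed the pole at the future time $\tau=t_0+100r^2$, the proposed majorant is $\equiv 0$ on $T_{4r}(x_0,t_0)$, and the lower bound you quote from \eqref{Gpositive} for $G(x_0,t_0-2r^2,r;Z,\tau)$ does not apply (in \eqref{Gpositive} the evaluation time lies \emph{after} the pole time). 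If instead you mean the adjoint Green function $G^*(\cdot\,;Z,\tau)=G(Z,\tau;\cdot)$ — which is indeed the object whose normal limit gives $K(Z,\tau;\cdot)$ — then it solves the backward equation $-\partial_t+\LL$, so the forward parabolic maximum principle cannot be applied to $CMr^{n+1}G^*\pm u$ at all.

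Even after fixing the orientation, a plain maximum-principle comparison does not close: on the lateral faces of the comparison box the majorant vanishes as $\lambda\to0$ (it is zero on $\{\lambda=0\}$) while $u$, which vanishes only on $Q_{4r}\times\{0\}$, is merely bounded by $M$ there and decays at an a priori incomparable H\"older rate, so one cannot dominate $\pm u$ on the whole parabolic boundary. Controlling the ratio $u/G^*$ near $\{\lambda=0\}$ is exactly the content of the boundary comparison principle, Lemma \ref{comp}, which is the tool the paper uses; and because $u$ is forward caloric while $G^*$ is adjoint caloric, Lemma \ref{comp} can only be invoked after the time reflection $v(x,t,\lambda)=G^*(x,2\hat t-t,\lambda)$ (this is where the symmetry/time-invariance \eqref{rearrange} and the lower bound $G^*(x_0,2t-t_0+2r^2,r)\ge cr^{-n-1}$ from \eqref{Gpositive} enter), which is precisely the ``time-lag bookkeeping'' your sketch defers. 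Your instinct to treat the sign of $u$ via $\pm u$ addresses a point the paper glosses over, but the mechanism you propose for it (maximum principle against a misoriented Green function) is the step that fails; the quotient argument through Lemma \ref{comp}, as in the paper, is what is needed there.
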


\begin{proof}
Assume that the reverse H\"older inequality \eqref{eq:revhold} holds. 
Note that by \eqref{eq:kernelK} and Lemma \ref{Gomega}, 
\[K(Z,\tau;x,t) = \lim_{\lambda\to0}\frac{G(Z,\tau;x,t,\lambda)}{\lambda}.\]
Let $Z=(x_0,5r)$, let $\tau=t_0+20r^2$ and let $G^*(x,t,\lambda) = G(Z,\tau;x,t,\lambda)$. 
We write 
\[\frac{u(x,t,\lambda)}{\lambda} = \frac{u(x,t,\lambda)}{G^*(x,t,\lambda)}\frac{G^*(x,t,\lambda)}{\lambda}.\]
For any $(x,\hat t,\lambda)\in T_r(x_0,t_0)$, we have 
\[
\frac{u(x,\hat t,\lambda)}{G^*(x,\hat t,\lambda)}\le \sup_{|t-t_0|<r^2}
\frac{u(x,t,\lambda)}{G^*(x,2\hat t-t,\lambda)}. 
\]
Since $G^*$ is adjoint caloric, the function $v(x,t,\lambda)=G^*(x,2\hat t-t,\lambda)$ is caloric in $T_{4r}(x_0,t_0)$. Using Lemma \ref{comp}, we see that 
\[\frac{u(x,t,\lambda)}{G^*(x,t,\lambda)}\le C\frac{u(x_0,t_0+2r^2,r)}{G^*(x_0,2t-t_0+2r^2,r)},\quad \text{for all }(x,t,\lambda)\in T_r(x_0,t_0).\]
Additionally, $G^*(x_0,2t-t_0+2r^2,r)\ge cr^{-n-1}$ for all such $t$ by \eqref{Gpositive}. 
Thus, if \eqref{eq:revhold} holds,  
\begin{align*}
&\int_{Q_r(x_0,t_0)} \limsup_{\lambda\to0} \left|\frac{u(x,t,\lambda)}{\lambda}\right|^2dxdt\le 
C \sup_{|t-t_0|<r^2}\left(\frac{u(x_0,t_0+2r^2,r)}{G^*(x_0,2t-t_0+2r^2,r)}\right)^{2}\int_{Q_r(x_0,t_0)}K^2(x,t)dxdt\\
& \qquad \qquad \le Cu^2(x_0,t_0+2r^2,r) r^{2n+2} r^{n+2}\frac{1}{|Q_r|}\int_{Q_r(x_0,t_0)}K^2(x,t)dxdt\\
& \qquad \qquad\le Cu^2(x_0,t_0+2r^2,r)r^{3n+4}\left(\frac{1}{|Q_r|}\int_{Q_r(x_0,t_0)}K(x,t)dxdt\right)^2\\
& \qquad \qquad \le Cu^2(x_0,t_0+2r^2,r)r^{n} \omega^2(Q_r) \le \frac{C}{r^3}\int_{T_{2r}(x_0,t_0)}u^2(x_0,t_0+2r^2,r)dxdtd\lambda\\
&\qquad \qquad \le \frac{C}{r^3}\int_{T_{2r}(x_0,t_0)}u^2(x,t,\lambda)dxdtd\lambda, 
\end{align*}
where the last inequality is a consequence of Harnack's inequality. \\

If \eqref{eq:loc} holds, fix $(x_0,t_0)$ and let $(Z,\tau)$ satisfy $|(x_0,0)-Z|^2\le |t_0-\tau|$, $z_{n+1}\ge 2r$ and $\tau-t_0\ge 16r^2$. Choose $u(x,t,\lambda)=G(Z,\tau;x,t,\lambda)=G^*(x,t,\lambda)$ in \eqref{eq:loc}, then $(-\partial_t+\LL)u=0$ in $T_{4r}(x_0,t_0)$. Thus 
\begin{align}\label{loctorh1}
\int_{Q_r(x_0,t_0)}K^2(x,t)dxdt
&=\int_{Q_r(x,t)} \limsup_{\lambda\to0} \left|\frac{G^*(x,t,\lambda)}{\lambda}\right|^2dxdt\\
&  \le\frac{C}{r^3}\int_{T_{2r}(x_0,t_0)}(G^*)^2(x,t,\lambda)dxdt\lambda, \nonumber
\end{align}
where $K = K^{(Z,\tau)}$. By Lemma \ref{harnack} and Lemma \ref{Gomega}, 
\begin{align*}
&\frac{C}{r^3}\int_{T_{2r}(x_0,t_0)}(G^*)^2(x,t,\lambda)dxdtd\lambda\le \frac{C}{r^3}r^{n+3}(G^*)^2(x_0,t_0-9r^2,r) 
\le Cr^{-n-2}\omega^2(Q_r(x_0,t_0-10r^2))\\ 
&\le Cr^{-n-2}\omega(Q_{11r}(x_0,t_0))\le C\omega(Q_r(x_0,t_0))
\le Cr^{n+2}\left(\frac{1}{|Q_r|}\int_{Q_r(x_0,t_0)}K(x,t) dxdt\right)^{2},
\end{align*}
where we used the doubling property \eqref{doubling}. 
This together with \eqref{loctorh1}
shows that \eqref{eq:loc} implies \eqref{eq:revhold}. \\


\end{proof}

\subsection{Local solvability}\label{subsec:locsolv}

In order to state the next lemma we shall need to introduce some notation. Let $\Omega_T$ be a Lipschitz cylinder as in \eqref{cyl} and let $S=\partial\Omega\times(0,T)$ be its lateral boundary. 
If $(X,t)\in S$ we let $\Gamma^\eta(X,t)$ be a parabolic nontangential cone of opening $\eta$ and vertex $(X,t)$. We choose $\eta$ so that for all $(X,t)\in S$, $\Gamma^\eta(X,t)\cap S=\{(X,t)\}$ in an appropriate system of coordinates. Let 
\[
\Gamma^\eta_r(X,t) = \Gamma^\eta_r(X,t)\cap\{(Y,s)\in \Omega_T:d((Y,s),S)<r\}.
\]
  If $(X,t)\in\Omega_T$, let $d=d((X,t),S)$ be the parabolic distance from $(X,t)$ to $S$ and define 
\[
Q(X,t) = Q(x,t,\lambda) = Q_{d/4}(x,t)\times (\lambda-d/4,\lambda+d/4), 
\] 
where $d = d(x,t,\lambda,S)/4$. If $\HH_1$ and $\HH_2$ are two operators defined by 
\[
\HH_iu := \partial_t u  -\dv(A_i\nabla u), \quad i=1,2,
\]
where $A_i = A_i(x,t,\lambda)$, let 
\[
\eps(x,t,\lambda) := A_1(x,t,\lambda)-A_2(x,t,\lambda), \quad 
\alpha(x,t,\lambda):= \sup_{Q(x,t,\lambda)}|\eps(y,s,\sigma)|. 
\]
\begin{Th}{(\cite[Theorem 6.5]{N1})}\label{transfer}
Suppose that 
\begin{equation*}
\lim_{r\to0^+}\sup_{(X_0,t_0)\in S}\frac{1}{|\Delta_r(X_0,t_0)|}\int_{\Delta_r(X_0,t_0)}\left(\int_{\Gamma^\eta_r(X,t)}\frac{\alpha^2(Y,s)}{d^{n+3}(Y,s,S)}d\sigma(Y,s)\right)d\sigma(X,t)=0.
\end{equation*}
Then, the Dirichlet problem in $\Omega_T$ is solvable for $\HH_1$ if, and only if, it is solvable for $\HH_2$. 
\end{Th}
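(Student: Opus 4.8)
The statement is quoted from \cite[Theorem 6.5]{N1}; here I only outline how one would establish it directly, the argument being the parabolic counterpart of the Fefferman--Kenig--Pipher perturbation theory for caloric measure. The plan is to transfer solvability into a statement about caloric measures: by (the cylindrical version of) Lemma \ref{RHsolv}, solvability of the Dirichlet problem for $\HH_i$ is equivalent to a reverse H\"older inequality for the Poisson kernel $K_i=d\omega_i/d\sigma$, i.e.\ to $\omega_i\in A_\infty(d\sigma)$. Since membership in $A_\infty$ is an equivalence relation, it then suffices to prove that $\omega_1$ and $\omega_2$ are mutually $A_\infty$; and using the change of variables $t\mapsto -t$ together with the symmetric roles of $\HH_1$ and $\HH_2$, this reduces to a single one-sided comparison, which we may prove assuming without loss of generality that $\HH_2$ is the solvable operator, so that the square-function and non-tangential maximal estimates for its solutions are available.

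For the comparison, I would fix a boundary surface ball $\Delta=\Delta_r(X_0,t_0)$, the associated Carleson box $\widetilde Q_r(X_0,t_0)\cap\Omega_T$, and a pole $(Z,\tau)$, and relate the $\HH_1$- and $\HH_2$-solutions $u_1,u_2$ with common boundary data through Green's identity for $\HH_1$: since $(\partial_t+\LL_1)(u_2-u_1)=-\dv(\eps\nabla u_2)$ with vanishing lateral and initial data, integration by parts against $G_1$, Green's function for $\HH_1$ in $\Omega_T$, gives
\[
u_2(Z,\tau)-u_1(Z,\tau)=\iint_{s<\tau}\big\langle \eps(Y,s)\,\nabla u_2(Y,s),\ \nabla_Y G_1(Z,\tau;Y,s)\big\rangle\,dY\,ds .
\]
Taking boundary data $\mathbf{1}_\Delta$ turns this into $\omega_2(Z,\tau,\Delta)=\omega_1(Z,\tau,\Delta)+\mathcal E$, and the error $\mathcal E$ is estimated by Cauchy--Schwarz: one factor is a conical square function of $u_2$ weighted by $\alpha^2\,d(\cdot,S)^{-(n+3)}$, the other a square function of $G_1$, which by Lemma \ref{Gomega} is comparable to $\omega_1(\Delta)$ up to the relevant scale factors, while the former is controlled by the (assumed) solvability of $\HH_2$. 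The vanishing Carleson hypothesis on $\alpha^2\,d(\cdot,S)^{-(n+3)}\,d\sigma$ forces $|\mathcal E|\le \eta(r)\,\omega_1(\Delta)$ with $\eta(r)\to0$, giving mutual absolute continuity at small scales and, after a stopping-time decomposition controlling the relevant densities, the quantitative $A_\infty$ comparison on small Carleson boxes.

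Because the hypothesis only furnishes a \emph{vanishing} (not uniformly small) Carleson norm, the global conclusion is reached by a self-improvement step: cover $\Omega_T$ by finitely many Carleson regions on which the localized Carleson norm of $\eps$ lies below the smallness threshold of the previous paragraph, apply the small-constant comparison on each, and patch the resulting estimates using the doubling property \eqref{doubling} of caloric measure and Harnack's inequalities (Lemmas \ref{intharnack}--\ref{harnack}), together with the stability of the $N$- and square-function estimates under the associated sawtooth decompositions. This yields solvability for $\HH_1$ from that of $\HH_2$, and the reverse implication follows by symmetry.

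The main obstacle is genuinely parabolic. One must establish the conical square-function estimate for $u_2$ — that $|\nabla u_2|^2\,d(\cdot,S)$, suitably normalized, is a Carleson measure — and pair it against the weight $\alpha^2\,d(\cdot,S)^{-(n+3)}$ while respecting the forward/backward-in-time asymmetry. Since Lemmas \ref{Gomega} and \ref{comp} carry a time lag (poles at $t_0\pm\rho^2$, $t_0\pm 2r^2$), the Green's function $G_1$ in the identity above has to be evaluated at time-shifted poles and the Cauchy--Schwarz splitting arranged so that these lags are absorbed by Harnack's inequality; achieving this balance with the correct homogeneity under the parabolic scaling $\|(X,t)\|$ is the crux of the matter.
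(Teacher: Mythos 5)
The paper offers no proof of this statement: Theorem~\ref{transfer} is imported verbatim from \cite[Theorem 6.5]{N1} and used as a black box, so there is no internal argument to compare yours against. Your outline does match, in substance, the route taken in the cited source: Nystr\"om's theorem is the parabolic analogue of the Dahlberg/Fefferman--Kenig--Pipher perturbation theory, proved by reducing solvability to a reverse H\"older/$A_\infty$ property of parabolic measure (exactly the equivalence recorded here as Lemma~\ref{RHsolv}), representing $u_2-u_1$ by Green's identity against $G_1$, estimating the error by Cauchy--Schwarz between a conical square function of $u_2$ weighted by $\alpha^2 d^{-(n+3)}$ and a Green's-function factor comparable to $\omega_1(\Delta)$ via Lemma~\ref{Gomega}, and then using the vanishing Carleson hypothesis at small scales together with doubling \eqref{doubling} and Harnack's inequality at unit scales. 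As a sketch of the cited proof, it is faithful.

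Be clear, however, about what your text leaves entirely unproved, since these items are the technical core of \cite{N1} rather than routine steps: the square-function/non-tangential-maximal-function estimates for solutions of the solvable operator in the parabolic setting, the stopping-time (good-$\lambda$) argument that upgrades the small-error comparison on Carleson boxes to a quantitative $A_\infty$ conclusion, and the management of the forward/backward time lag in Lemmas~\ref{Gomega} and~\ref{comp}, which you correctly identify as the crux but do not resolve. One small inaccuracy: the change of variables $t\mapsto -t$ relates each $\HH_i$ to its adjoint, not $\HH_1$ to $\HH_2$; the reduction to a one-sided comparison follows simply because the hypothesis is symmetric in $A_1$ and $A_2$ (the function $\alpha$ is unchanged when $\eps$ changes sign). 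None of this constitutes a gap in the paper itself---the theorem is cited, not proved---but your write-up should be labelled as a summary of the argument in \cite{N1}, not as a self-contained proof.
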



\begin{Prop}\label{Prop:localsolvab}
Let $A$ be a real and symmetric matrix satisfying 
\eqref{eq:Aellip},
\eqref{eq:Atindep}
and 
\eqref{eq:Dini}.
Then the local solvability condition, \eqref{eq:loc} for $0<r \leq 1$, is satisfied.
\end{Prop}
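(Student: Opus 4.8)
The plan is to deduce the local solvability condition \eqref{eq:loc} for $0<r\le 1$ from the perturbation machinery of Theorem \ref{transfer} together with the known solvability for operators with smooth (or H\"older) coefficients. The starting point is the observation, already recorded in Lemma \ref{Prop:equiv2.3-2.4}, that \eqref{eq:loc} for $0<r\le 1$ is equivalent to the reverse H\"older inequality \eqref{eq:revhold} restricted to the corresponding range of scales, and hence to the $L^p$-solvability of the Dirichlet problem at small scales. So it suffices to produce, for each fixed reference point, an operator $\HH_2$ whose coefficient matrix $A_2$ is a small perturbation of $A$ in the Carleson-type sense demanded by Theorem \ref{transfer}, and for which the Dirichlet problem is already known to be solvable.

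The key step is the \emph{mollification} of $A$ in the $\lambda=x_{n+1}$ variable. Fix a scale and set $A_2(x,\lambda) := (A(x,\cdot)*\varphi_{\delta(\lambda)})(\lambda)$, where $\varphi$ is a standard mollifier and $\delta(\lambda)\sim\lambda$ is comparable to the distance to the boundary $\{\lambda=0\}$ — a "sawtooth" or Lipschitz-adapted mollification, so that $A_2$ is smooth inside the half-space, still symmetric and elliptic with the same constant $\Lambda$, and independent of $t$. The pointwise difference is controlled by the modulus of continuity: $|\eps(x,\lambda)| = |A_1(x,\lambda)-A_2(x,\lambda)| \le C\,\theta(c\lambda)$, using only continuity of $A$ in $\lambda$ (here $\theta$ is the modulus in \eqref{eq:Dini}). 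Consequently $\alpha(x,t,\lambda)\le C\theta(c\lambda)$ as well, since on $Q(x,t,\lambda)$ the $\lambda$-coordinate stays comparable to $\lambda$. Feeding this into the Carleson functional of Theorem \ref{transfer}, the inner integral over $\Gamma^\eta_r(X,t)$ is bounded by $C\int_0^{r}\frac{\theta(c\lambda)^2}{\lambda^{n+3}}\lambda^{n+2}\,d\lambda = C\int_0^{r}\frac{\theta(c\lambda)^2}{\lambda}\,d\lambda$, which tends to $0$ as $r\to 0$ precisely by the Dini hypothesis \eqref{eq:Dini}. (Here I use that the cone $\Gamma^\eta_r(X,t)$, sliced at height $\lambda$, has parabolic measure $\sim\lambda^{n+2}$, and that $d((Y,s),S)\sim\lambda$ on the cone.) Thus the hypothesis of Theorem \ref{transfer} holds, and $L^p$-solvability transfers between $\HH = \HH_1$ and $\HH_2$.

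It remains to know that the Dirichlet problem for the mollified operator $\HH_2$ is solvable in $L^p$ at small scales. Since $A_2$ is time-independent and smooth (in particular H\"older) in all spatial variables inside $\R^{n+2}_+$, this is exactly the setting covered by the earlier-cited results for H\"older coefficients (Castro--Rodr\'iguez-L\'opez--Staubach, \cite{CRS}) — or, alternatively, one may mollify only in $\lambda$ and invoke the case of coefficients independent of one spatial variable once combined with a further perturbation; either way the conclusion \eqref{eq:revhold}, hence \eqref{eq:loc}, holds for $\HH_2$ on the relevant scales. Transferring back via Theorem \ref{transfer} gives \eqref{eq:loc} for $\HH$ when $0<r\le 1$, using Lemma \ref{Prop:equiv2.3-2.4} to pass between the reverse H\"older formulation and \eqref{eq:loc}. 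The main obstacle I anticipate is the bookkeeping in the Carleson estimate: one must choose the $\lambda$-dependent mollification radius carefully so that $A_2$ is genuinely smooth away from the boundary while the pointwise error is governed by $\theta(c\lambda)$ uniformly in $x$, and then verify that the geometry of the nontangential cones reduces the double surface integral in Theorem \ref{transfer} to the single Dini integral $\int_0^r \theta(c\lambda)^2\,d\lambda/\lambda$; the ellipticity and symmetry of $A_2$, and its $t$-independence, are preserved automatically by the mollification, so those cost nothing.
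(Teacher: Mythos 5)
Your overall strategy -- produce a reference operator $\HH_2$, verify the Carleson/Dini condition of Theorem \ref{transfer} for the difference, and transfer solvability -- is exactly the paper's strategy, and your Carleson computation (cone slices of measure $\sim\lambda^{n+2}$, distance $\sim\lambda$, giving $\int_0^{\eta r}\theta(c\lambda)^2\,d\lambda/\lambda\to0$) matches the paper's estimate \eqref{alambda2}. The genuine gap is in the step ``it remains to know that the Dirichlet problem for $\HH_2$ is solvable.'' Your $A_2$ is a mollification of $A$ in $\lambda$ at scale $\delta(\lambda)\sim\lambda$, so it is smooth only in the open half-space: its derivative bounds degenerate like $\theta(\lambda)/\lambda$ as $\lambda\to0$, and its modulus of continuity up to the boundary is no better than that of $A$ itself (Dini, not H\"older). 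The result of \cite{CRS} requires a uniform H\"older modulus for the coefficients, so it does not cover this matrix, and none of the other tools available in the paper (\cite{CNSande}, \cite{N2}: coefficients independent of one spatial variable; \cite{N1}: perturbation) applies to it directly. Your fallback -- ``mollify only in $\lambda$ and invoke the case of coefficients independent of one spatial variable once combined with a further perturbation'' -- is circular: the mollified matrix is still genuinely $\lambda$-dependent, and comparing it to a $\lambda$-independent matrix is precisely the perturbation problem you started with, so the mollification buys nothing.

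The paper's fix is simpler: instead of mollifying, freeze the coefficients at the boundary, taking $A_2(x,\lambda)$ built from $A(x,0)$ (with cutoffs $\phi_1,\phi_2$ so that $A_2=I$ away from a bounded region). Near $\{\lambda=0\}$ this matrix is independent of $\lambda$, so solvability is exactly the setting of \cite{CNSande} and \cite{N2}, and far away $A_2=I$; the perturbation is $\eps(x,\lambda)=\phi_2\phi_1\,(A(x,\lambda)-A(x,0))$, bounded by $\theta(\lambda)$, after which your Dini computation goes through verbatim. A secondary point you also skip: Theorem \ref{transfer} is stated for bounded Lipschitz cylinders, which is why the paper localizes everything to $T_{12}$ (via the cutoffs, which also preserve ellipticity) and then observes that any $u$ with $\HH u=0$ in $T_{4r}$, $r\le1$, also satisfies $\HH_1 u=0$ there, so the local estimate \eqref{eq:loc} for the localized operator yields \eqref{eq:loc} for $\HH$. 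To make your argument correct you would need both this localization and a replacement for the unjustified solvability claim for the mollified operator; with the boundary-freezing choice of $A_2$ both issues disappear.
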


\begin{proof}
Without loss of generality, it may be assumed that $(x_0,t_0)=(0,0)$. Let $\phi_1(\lambda)$ be a smooth function that satisfies $\phi_1(\lambda)=1$, for $0\le\lambda<4$, and 
$\phi_1(\lambda)=0$, for $\lambda\ge 8$. Take $\phi_2(x)$ another smooth function verifying $\phi_2(x)=1$, for $0\le|x|<4$, and 
$\phi_2(x)=0$, for $|x|\ge 8$. We define the operator 
$$\HH_1 u:= \partial_t u -\dv(A_1(x,\lambda)\nabla u),$$ 
which is given by the matrix
\[
A_1(x,\lambda) := \phi_2(\lambda)[\phi_1(x)A(x,\lambda)+(1-\phi_1(x))I] + (1-\phi_2(\lambda))I,
\]
where $I$ denotes the $(n+1)$--dimensional identity matrix. Observe that $A_1$ is uniformly elliptic. 

Our goal is to prove that the Dirichlet problem for $\HH_1$ is solvable in $T_{12}$ and thus satisfies the local solvability condition \eqref{eq:loc} in that domain (see Proposition \ref{Prop:equiv2.3-2.4}). In particular, this gives us 
\[
\int_{Q_r}\limsup_{\lambda\to0}\left|\frac{u(x,t,\lambda)}{\lambda}\right|^2dxdt\le \frac{C}{r^3}\int_{T_{2r}}|u(x,t,\lambda)|^2dxdtd\lambda,
\]
for all $0<r<1$, whenever $\HH_1 u=0$ in $T_{4r}$ and $u(x,t,0)=0$ on $Q_{4r}$. Notice that, when $\HH u = 0$ in $T_{4r}$, then also $\HH_1 u = 0$ in $T_{4r}$ and thus the local solvability condition for $\HH$ follows from that of $\HH_1$.

We introduce yet another operator $\HH_2$ through the matrix
\[
A_2(x,\lambda) := \phi_2(\lambda)[\phi_1(x)A(x,0) + (1-\phi_1(x))I] + (1-\phi_2(\lambda))I,  
\]
which is easier to handle. 
To prove solvability for $\HH_2$ we are going to show that \eqref{eq:loc} holds. Hence, it is enough to show that in any unit neighborhood $\mathcal{N}:=Q_1(x,t)\times(\lambda-1,\lambda+1)$ of each $(x,t,\lambda)\in S$, the Dirichlet problem for $\HH_2|_{\mathcal N}$ is solvable. 
When $\lambda\le 1$, $A_2(x,\lambda)$ does not depend on $\lambda$, and in this situation the solvability has been established previously in \cite{CNSande} and \cite{N2}.  At unit distance from the remaining part of the boundary, $A_2=I$, for which the solvability is well known.

Next, we make use of Theorem \ref{transfer} to transfer the solvability from $\HH_2$ to $\HH_1$. We have that
\[
\eps(x,t,\lambda):=\eps(x,\lambda):=A_2(x,\lambda)-A_1(x,\lambda) = \phi_2(\lambda)\phi_1(x)(A(x,\lambda)-A(x,0)). 
\]
If $d(x,t,\lambda,S)<1$ and $\lambda>1$, then either $\lambda>10$ or $|x|>10$, which implies $\eps(x,\lambda)=0$. 
If $(Z,\tau)$ belongs to the lateral boundary $S$ of $T_{12}$ and $\rho>0$ is small enough (it suffices to take $\rho<\min(1,1/\eta)$), then if $(x,t,\lambda)\in \Gamma^\eta_\rho(Z,\tau)$, we may have 
$\eps(x,t,\lambda)\neq 0$ only if $(Z,\tau)\in\partial T_{10}\cap\{\lambda=0\}$, in which case $d(x,t,\lambda,S)=\lambda$ and $\lambda<\eta\rho<1$. It follows that 
\begin{equation*}
\alpha(x,t,\lambda):=\alpha(x,\lambda) \le\left\{\begin{aligned}
& |A(x,\lambda)-A(x,0)|,\quad\text{if }(Z,\tau)\in S\cap\{\lambda=0\},\\
&0,\quad\text{otherwise},
\end{aligned}\right. 
\end{equation*}
for all $(x,t,\lambda)\in \Gamma^\eta_\rho(Z,\tau)$ and all $(Z,\tau)\in S$. 
We conclude that if $\rho$ is small enough and $(Z,\tau)\in S$, $(Z,\tau)\not\in \partial T_{10}\cap\{\lambda=0\}$, then 
\[
\int_{\Gamma^\eta_r(Z,\tau)}\frac{\alpha^2(x,t,\lambda)}{d^{n+3}(x,t,\lambda,S)}dxdtd\lambda=0.
\]
If $(Z,\tau)\in \partial T_{10}\cap\{\lambda=0\}$, then 
\begin{align}\label{alambda2}
&\int_{\Gamma^\eta_r(Z,\tau)}\frac{\alpha^2(x,t,\lambda)}{d^{n+3}(x,t,\lambda,S)}dxdtd\lambda \le \int_{\Gamma^\eta_r(Z,\tau)}\frac{|A(x,\lambda)-A(x,0)|^2}{\lambda^{n+3}}dxdtd\lambda \nonumber \\
&\qquad \qquad \le \int_{\Gamma^\eta_r(Z,\tau)}\frac{\theta^2(\lambda)}{\lambda^{n+3}}dxdtd\lambda\le C\int_0^{\eta\rho}\frac{\theta^2(\lambda)}{\lambda}d\lambda\le C\int_0^{1}\frac{\theta^2(\lambda)}{\lambda}d\lambda,
\end{align}
where we used \eqref{eq:Dini} and the fact that the measure of $\Gamma^\eta_r(Q)\cap\{(y,s,\sigma):\sigma=\lambda\}$ is of order $\lambda^{n+2}$. 
As a consequence of \eqref{eq:Dini} we get 
$$\lim_{\rho\to0}\int_0^{\eta\rho} \frac{\theta^2(\lambda)}{\lambda} d\lambda=0.$$
Therefore, since \eqref{alambda2} does not depend on $(Z,\tau)$, the hypothesis of Theorem \ref{transfer} is verified and we conclude that $\HH_1$ is solvable in $T_{12}$, because we already know that $\HH_2$ is solvable in $T_{12}$.  
\end{proof}

\subsection{Local solvability implies \eqref{eq:loc} for all $r>1$}\label{Dahlberg}

%
By localizing the operator $\HH$ we were able to prove local solvability in the previous section. Now, using the periodicity of $A$ we infer \eqref{eq:loc} for all $r>1$. This proof is based on an unpublished work of Dahlberg, which is available in \cite[Appendix]{KS}.
We shall need the following Cacciopolli inequality in the proof. 

\begin{Lem}
\label{Caccioppoli}
Let $R>0$ and for any $\gamma>0$, let $$\Omega_\gamma = \{(x,\lambda): |x_i|<2R\text{ for }i=1\ldots,n,\; 0<\lambda<\gamma R\}.$$ 
Suppose $\HH u = 0$ in $\Omega_4\times(0,16R^2)$ and that $u=0$ on $\partial_L(\Omega_4\times(0,8R^2))\cup \partial_P(\Omega_4\times(0,8R^2))$. 
Then 
\begin{equation}\label{cac}
\int_{\Omega_2\times(0,4R^2)}|\nabla u|^2dxdtd\lambda\le 
\frac{C}{R^2}\int_{\Omega_3\times(0,8R^2)}|u|^2dxdtd\lambda. 
\end{equation}
\end{Lem}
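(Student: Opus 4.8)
The plan is to prove this Caccioppoli inequality in the standard way, by testing the equation against $\eta^2 u$ for a suitable cutoff function $\eta$, while being careful about the parabolic nature of the problem and the boundary terms that arise from the time-derivative. Since $u$ vanishes on the lateral and parabolic boundary of the larger cylinder, no boundary terms appear from integration by parts in the spatial variables near those faces, and the initial condition $u(\cdot,0)=0$ kills the contribution at the bottom in time.

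First I would choose a cutoff $\eta(x,\lambda,t) = \psi(x,\lambda)\chi(t)$, where $\psi$ is a smooth spatial cutoff equal to $1$ on $\Omega_2$, supported in $\Omega_3$, with $|\nabla\psi|\le C/R$, and $\chi$ is a smooth function of time equal to $1$ on $(-\infty,4R^2]$, vanishing for $t\ge 8R^2$, with $|\chi'|\le C/R^2$. Then I would multiply $\HH u = \partial_t u - \dv(A\nabla u) = 0$ by $\eta^2 u$ and integrate over $\Omega_3\times(0,8R^2)$. The spatial term gives, after integration by parts (legitimate because $\eta$ is supported away from the lateral boundary of $\Omega_3$ and $u=0$ on the relevant boundary pieces),
\[
\int \eta^2 A\nabla u\cdot\nabla u\,dX\,dt + 2\int \eta u\, A\nabla u\cdot\nabla\eta\,dX\,dt,
\]
and the time term, after integrating by parts in $t$ and using $u(\cdot,0)=0$ and $\chi(8R^2)=0$, gives $-\int \eta\,\eta_t\,u^2\,dX\,dt$ (the $\tfrac12\partial_t(\eta^2u^2)$ piece vanishes by the endpoint conditions). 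Using ellipticity \eqref{eq:Aellip} on the first term, Cauchy--Schwarz with $\varepsilon$ on the cross term $2\int\eta u\,A\nabla u\cdot\nabla\eta$ to absorb $\int\eta^2|\nabla u|^2$ into the left-hand side at the cost of $C\int u^2|\nabla\eta|^2$, and bounding $|\eta\eta_t|\le C/R^2$, I arrive at
\[
\int \eta^2|\nabla u|^2\,dX\,dt \le C\Big(\int u^2|\nabla\eta|^2 + \int |\eta\eta_t|\,u^2\Big)\,dX\,dt \le \frac{C}{R^2}\int_{\Omega_3\times(0,8R^2)}u^2\,dX\,dt,
\]
and restricting the left side to $\Omega_2\times(0,4R^2)$ where $\eta\equiv1$ gives \eqref{cac}.

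The main technical point to watch is the justification of the integration by parts, both in space and in time, given that $u$ is only known to be a weak solution: one should argue via the Steklov (or a mollification-in-time) average of $u$ to make the manipulation of $\partial_t u$ rigorous, then pass to the limit, a routine step in parabolic regularity theory. The only genuinely parabolic subtlety — that energy methods for parabolic equations normally produce a $\sup_t$ term in $L^2$ together with the gradient bound — does not cause trouble here because we only claim the gradient estimate; the $\sup$-in-time term, being nonnegative, is simply discarded. The hypothesis that $A$ is merely bounded and elliptic (no regularity needed) is all that is used, consistent with the De Giorgi--Moser--Nash framework already invoked in the excerpt, so no further assumptions on $A$ enter.
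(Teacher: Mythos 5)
Your proof is correct and is essentially the paper's own argument: the paper likewise tests the weak formulation against $u\phi^2$ with $\phi=\phi_1(\lambda)\phi_2(t)$ and runs the standard ellipticity--absorption scheme, the only cosmetic difference being that it cuts off only in $\lambda$ and $t$ (using $u=0$ on the lateral boundary to kill the $x$-face terms, exactly as you note) rather than in all spatial variables. Note only that a cutoff equal to $1$ on $\Omega_2$ cannot literally be compactly supported in $\Omega_3$, since the two boxes share the faces $|x_i|=2R$; your appeal to $u=0$ on those faces is what actually disposes of the corresponding boundary terms.
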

\begin{proof}
Let $\phi(x,t,\lambda) = \phi_1(\lambda)\phi_2(t)$, where $\phi_1$ and $\phi_2$ are smooth cut-off functions such that $\phi_1(\lambda)= 1$ for $|\lambda|\le 2R$, $\phi_1(\lambda)=0$ for $|\lambda|>3R$, $|\phi_1'|\le C/R$ and $\phi_2(t)=1$ for $|t|\le 4R^2$, $\phi_2(t)=0$ for $|t|>8R^2$, $|\phi_2'|\le C/R^2$. The proof then follows by using $u\phi^2$ as a test function in the weak formulation of $\HH u = 0$ in $\Omega_4\times(0,8R^2)$.  
\end{proof}
We remark that \eqref{cac} holds with $\Omega_{\gamma_1}$ and $\Omega_{\gamma_2}$ in place of $\Omega_2$ and $\Omega_3$ for any $0<\gamma_1<\gamma_2<4$, with $C$ depending on $\gamma_1$ and $\gamma_2$. 
The following lemma is a key tool in the proof. 

\begin{Lem}\label{greenest}
Let $R>8$ and let $\Omega_\gamma$ be as in Lemma \ref{Caccioppoli}. Let $A$ be a real and symmetric matrix satisfying 
\eqref{eq:Aellip},
\eqref{eq:Atindep}
and
\eqref{eq:Aperiod}.
Suppose $\HH u=0$ in $\Omega_4\times(0,8R^2)$ and that $u=0$ on $\partial_P(\Omega_4\times(0,8R^2))$ and $u=0$ on $\partial_L(\Omega_4\times(0,8R^2)$. Define 
\[\Q u(x,t,\lambda):=u(x,t,\lambda+1)-u(x,t,\lambda).\]
Then, for $(x,t,\lambda)\in \overline{\Omega_2\times(0,4R^2)}$ such that $\lambda\ge R$, we have 
\begin{equation*}
|\Q u(x,t,\lambda)|\le \frac{C}{R}\left(\frac{1}{R^{n+3}}\int_{\Omega_3\times(0,8R^2)}|u(x,t,\lambda)|^2dxdtd\lambda\right)^{1/2}. 
\end{equation*} 
\end{Lem}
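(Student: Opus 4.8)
The plan is to exploit the periodicity of $A$ in the $\lambda$-direction together with the Cacciopolli inequality of Lemma \ref{Caccioppoli} and the De Giorgi--Moser--Nash estimates \eqref{eq:Moser}--\eqref{eq:MoserBoundary}. The key observation is that, since $A(x,\lambda+1)=A(x,\lambda)$ by \eqref{eq:Aperiod} and $A$ is time-independent, the translated function $v(x,t,\lambda):=u(x,t,\lambda+1)$ satisfies the \emph{same} equation $\HH v=0$ as $u$ on the (slightly smaller) cylinder. Hence $\Q u=v-u$ is itself $\LL$-caloric wherever both $u$ and $v$ are defined, i.e.\ on $\Omega_3\times(0,8R^2)$ roughly speaking. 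Moreover $\Q u$ vanishes on the lateral part $\{|x_i|=2R\}$ and on the bottom $\{t=0\}$ of this cylinder, because $u$ vanishes there (and so does $v$, being a translate in $\lambda$ only). So $\Q u$ is a caloric function vanishing on a large portion of the boundary; the only boundary piece where it need not vanish is $\{\lambda=0\}$, but the point $(x,t,\lambda)$ at which we want to estimate $\Q u$ has $\lambda\ge R$, hence sits at distance $\gtrsim R$ from $\{\lambda=0\}$.

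First I would record that $w:=\Q u$ solves $\HH w=0$ on $\Omega_3\times(0,8R^2)$, vanishes on $\partial_L(\Omega_3\times(0,8R^2))$ and on $\partial_P$, and then apply Moser's estimate: since the parabolic cube $\widetilde Q_{cR}(x,t,\lambda)$ around our point (for a suitable small dimensional constant $c$, using $\lambda\ge R$ and $R>8$) lies inside $\Omega_3\times(0,8R^2)$, away from the boundary, we get from \eqref{eq:Moser}
\[
|\Q u(x,t,\lambda)|\le C\Big(\frac{1}{R^{n+3}}\int_{\widetilde Q_{cR}(x,t,\lambda)}|\Q u|^2\Big)^{1/2}.
\]
Next, by the mean value theorem in $\lambda$ (or simply $\Q u(\cdot,\lambda)=\int_0^1\partial_\lambda u(\cdot,\lambda+s)\,ds$), one bounds $\int_{\widetilde Q_{cR}}|\Q u|^2\le C\int_{\Omega_{3}\times(0,8R^2)}|\nabla u|^2$, after enlarging the domain of integration to absorb the unit shift (which is harmless since $R>8$). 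Finally, apply the Cacciopolli inequality \eqref{cac} — in the form noted right after Lemma \ref{Caccioppoli}, with intermediate radii between $3$ and $4$ — to replace $\int|\nabla u|^2$ by $R^{-2}\int_{\Omega_{\gamma}\times(0,8R^2)}|u|^2$ for some $3<\gamma<4$, and then note $\Omega_\gamma\subset\Omega_4$, redoing the bookkeeping so the final integral is over $\Omega_3\times(0,8R^2)$ (by adjusting the radii in Lemma \ref{Caccioppoli} once more, from $\gamma_1=$ something slightly above $3$ down to $3$, or simply enlarging). Collecting the constants gives the asserted $C/R$ bound.

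The main obstacle is the bookkeeping of radii and cylinders near the boundary $\{\lambda=0\}$: the translate $v(x,t,\lambda)=u(x,t,\lambda+1)$ is only defined for $\lambda>-1$, and to invoke \eqref{eq:Moser} for $\Q u$ one needs a full interior parabolic cube around $(x,t,\lambda)$ contained in the region where \emph{both} $u$ and $v$ solve the equation and where the boundary vanishing of $\Q u$ is available; this is exactly why the hypothesis $\lambda\ge R$ and $R>8$ are imposed, and one must check the cube $\widetilde Q_{cR}(x,t,\lambda)$ genuinely fits inside $\Omega_{3}\times(0,8R^2)$ minus the bad face — e.g.\ its $t$-range stays in $(0,8R^2)$ because $t<4R^2$ so there is room $\gtrsim R^2$ on both sides, and its $x$-range stays inside $\{|x_i|<3R\}$ since $|x_i|<2R$. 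The rest is a concatenation of the three standard inequalities (Moser, fundamental theorem of calculus in $\lambda$, Cacciopolli), each of which is either quoted from the excerpt or elementary.
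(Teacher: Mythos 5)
Your overall strategy is the same as the paper's: periodicity of $A$ in $\lambda$ makes $\Q u$ itself a solution, a Moser-type sup bound reduces the pointwise estimate to an $L^2$ average of $\Q u$, the fundamental theorem of calculus in $\lambda$ converts $|\Q u|^2$ into $|\nabla u|^2$, and Caccioppoli (Lemma \ref{Caccioppoli}) converts that into $R^{-2}\int |u|^2$. However, there is a genuine flaw in your Moser step. You invoke the interior estimate \eqref{eq:Moser} on a cube $\widetilde Q_{cR}(x,t,\lambda)$ claimed to lie inside $\Omega_3\times(0,8R^2)$ ``away from the boundary,'' justified by saying the $x$-range stays inside $\{|x_i|<3R\}$ and that there is room $\gtrsim R^2$ on both sides in $t$. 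Both claims fail: by the definition in Lemma \ref{Caccioppoli}, $\Omega_\gamma$ has the \emph{same} $x$-extent $|x_i|<2R$ for every $\gamma$ (only the $\lambda$-height is $\gamma R$), and the point ranges over the closure $\overline{\Omega_2\times(0,4R^2)}$, so $|x_i|$ may equal $2R$ and $t$ may be arbitrarily close to $0$; in those cases no parabolic cube of radius comparable to $R$ centered at the point is contained in the domain. The correct step — and what the paper does — is to apply the boundary estimate \eqref{eq:MoserBoundary} on $\widetilde Q_{R/4}(x,t,\lambda)\cap(\Omega_4\times(0,8R^2))$, which is legitimate precisely because $\Q u$ vanishes on the faces $\{|x_i|=2R\}$ and at $\{t=0\}$ (facts you yourself record in your preamble), while the hypothesis $\lambda\ge R$ keeps the cube away from $\{\lambda=0\}$ and $\{\lambda=3R\}$, where $\Q u$ need not vanish; note also that your blanket claim that $\Q u$ vanishes on all of $\partial_L(\Omega_3\times(0,8R^2))$ is false on those two $\lambda$-faces.

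A smaller bookkeeping point: in the Caccioppoli step you propose radii ``between $3$ and $4$'' and then ``enlarging,'' which would leave the right-hand side over some $\Omega_\gamma$ with $\gamma>3$ rather than the stated $\Omega_3$. Since the $\lambda$-support of the gradient integral produced by the FTC step is at most about $2R+1+R/4<\tfrac{5}{2}R$ (using $R>8$), you should instead apply the remark after Lemma \ref{Caccioppoli} with $\gamma_1\le\tfrac{5}{2}$ and $\gamma_2=3$, which yields exactly $\frac{C}{R^2}\int_{\Omega_3\times(0,8R^2)}|u|^2$ as required. With these two corrections your argument coincides with the paper's proof.
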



\begin{proof}
By the periodicity of $A$, $\HH \Q u=0$ in $\Omega_3\times(0,8R^2)$. Thus,
for $(x,t,\lambda) \in \Omega_2\times(0,4R^2)$ such that $\lambda\ge R$, \eqref{eq:MoserBoundary} yields
\begin{equation*} 
|\Q u(x,t,\lambda)|\le C\left(\frac{1}{R^{n+3}} \int_{K_R(x,t,\lambda)} |\Q u|^2dydsd\sigma\right)^{1/2}, 
\end{equation*}
where $K_R(x,t,\lambda)=\widetilde Q_{R/4}(x,t,\lambda)\cap (\Omega_4\times(0,8R^2))$. Let 
\[
I=\{x\in\R^n:|x_i|<2R\text{ for }i=1,\ldots,n\}.
\]
An application of the fundamental theorem of calculus, H\"older's inequality and Fubini's theorem leads to
\begin{align*}
& \int_{K_R(x,t,\lambda)}|\Q u|^2dydsd \sigma
     = \int_{K_R(x,t,\lambda)}\Big| \int_{\lambda}^{\lambda +1} \partial_\sigma u(x,t,\sigma) d\sigma \Big|^2dydsd \lambda \\
& \qquad \leq \int_{I\times(0,8R^2)} \int_{3R/4}^{9R/4} \int_{\lambda}^{\lambda +1} |\nabla u(y,s,\sigma) |^2 d\sigma d \lambda dyds 
 \leq \int_{I\times(0,8R^2)} \int_{3R/4}^{1+9R/4}  |\nabla u(y,s,\sigma) |^2 d\sigma dyds \\
& \qquad \leq \int_{I\times(0,8R^2)} \int_{3R/4}^{10R/4}  |\nabla u(y,s,\sigma) |^2 d\sigma dyds 
\leq \frac{C}{R^2}\int_{\Omega_3\times(0,8R^2)} |u(x,t,\sigma) |^2  dyds d\sigma,
\end{align*}
where in the last inequality we also applied Lemma \ref{Caccioppoli}.
\end{proof}

\begin{Th}\label{Prop:2.4forrgeq1}
Let $A$ be a real and symmetric matrix satisfying 
\eqref{eq:Aellip},
\eqref{eq:Atindep}
and
\eqref{eq:Aperiod}.
Assume that \eqref{eq:loc} holds for $0<r\leq 1$. Then \eqref{eq:loc} also holds for all $r>1$. 
\end{Th}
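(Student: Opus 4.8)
The plan is to reduce the claim for a scale $r>1$ to the already-known estimate \eqref{eq:loc} at the unit scale via Dahlberg's trick of exploiting periodicity through the difference operator $\Q u(x,t,\lambda)=u(x,t,\lambda+1)-u(x,t,\lambda)$. Fix $(x_0,t_0)$, which we may take to be the origin, and a caloric function $u$ with $\HH u=0$ in $T_{4r}(0,0)$ and $u(x,t,0)=0$ on $Q_{4r}(0,0)$. The left-hand side of \eqref{eq:loc} only sees the behavior of $u$ near $\lambda=0$, namely $\limsup_{\lambda\to0}|u(x,t,\lambda)/\lambda|^2$; the idea is to write $u$ near the boundary as a telescoping sum of translates of $\Q u$ plus a piece controlled at the unit scale. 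More precisely, on a region near $\{\lambda=0\}$ the periodicity gives $\HH\Q u=0$ wherever $\HH u=0$ one unit higher, so $\Q u$ is again caloric (with the same vanishing trace on $\{\lambda=0\}$), and one can iterate: set $u_k=\Q^k u$. Applying \eqref{eq:loc} at scale $r\le1$ (the unit-scale hypothesis) to each $u_k$ on unit cubes and summing, one must bound $\sum_k \|N(u_k)\|$ type quantities, using the decay furnished by Lemma \ref{greenest}: each application of $\Q$ over a region of size comparable to a large scale $\rho$ produces a gain of a factor $\rho^{-1}$ in $L^2$, which after rescaling yields geometric decay in $k$.

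The key steps, in order, would be: (i) rescale so that it suffices to prove \eqref{eq:loc} at a fixed scale $r=R$ with $R$ large, reducing to the domains $\Omega_\gamma\times(0,cR^2)$ of Lemma \ref{Caccioppoli}; (ii) cover $Q_R(0,0)$ near $\{\lambda=0\}$ by boundedly many unit cubes and, on each, invoke the unit-scale \eqref{eq:loc} for $u$ itself, which controls $\int \limsup_{\lambda\to0}|u/\lambda|^2$ over that unit cube by $\int_{T_2}|u|^2$ over the corresponding unit tube; (iii) sum these local estimates over the $O(R^{n+2})$ unit cubes, producing on the right an integral of $|u|^2$ over a region at height $\lambda\lesssim1$; (iv) here is where periodicity enters: express $u$ at height $\lambda\lesssim1$ in terms of $u$ at height $\lambda\approx R$ minus a telescoping sum $\sum_{j=1}^{R-O(1)}\Q u(\,\cdot\,,\lambda+j)$, then estimate each term using Lemma \ref{greenest}, which bounds $|\Q u|$ at heights $\gtrsim R$ by $R^{-1}(R^{-(n+3)}\int_{\Omega_3\times(0,8R^2)}|u|^2)^{1/2}$; (v) assemble the pieces: the $R^{n+2}$ from the sum of unit-cube estimates is beaten by the smallness coming from the $\Q$-gains (after the telescoping and Lemma \ref{greenest}, each term carries an $R^{-1}$ and there are $O(R)$ of them, but the $L^2\to L^\infty$ passage and the volume normalizations must be tracked carefully), leaving exactly the $r^{-3}\int_{T_{2r}}|u|^2$ on the right of \eqref{eq:loc} at scale $r=R$.

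The main obstacle I expect is the bookkeeping in step (iv)–(v): making the telescoping decomposition actually close, i.e. showing that the contribution of $u$ at the top height $\lambda\approx R$ together with all the $\Q$-correction terms really reproduces $\int_{T_{2r}}|u|^2$ with the correct power $r^{-3}$ and a constant independent of $R$. One has to be careful that Lemma \ref{greenest} requires $u$ to vanish on the full lateral and parabolic boundary of $\Omega_4\times(0,8R^2)$, whereas our $u$ only vanishes on $\{\lambda=0\}$; this is handled by first multiplying $u$ by a cutoff adapted to $Q_{4R}$ (as in Proposition \ref{Prop:localsolvab}) or by working with the localized operator, so that the modified function satisfies the hypotheses of Lemmas \ref{Caccioppoli} and \ref{greenest} while agreeing with $u$ on $T_{2R}$. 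A secondary technical point is that \eqref{eq:loc} at the unit scale is stated for caloric functions with zero trace, so one must verify that each $\Q^k u$ (or the relevant translates thereof) still satisfies $\HH(\Q^k u)=0$ and vanishes on the appropriate portion of $\{\lambda=0\}$, which follows directly from \eqref{eq:Aperiod} and \eqref{eq:Atindep} but should be stated explicitly.
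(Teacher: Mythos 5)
Your steps (i)--(iii) coincide with the opening of the paper's argument (cover $Q_r$ by unit cubes, apply the unit-scale estimate, sum), but the mechanism you propose for extracting the periodicity gain --- applying $\Q$ to $u$ itself and telescoping --- has two genuine gaps. First, Lemma \ref{greenest} requires the function to vanish on the \emph{lateral and parabolic} boundary of the large box $\Omega_4\times(0,8R^2)$, whereas your $u$ is only known to vanish on $Q_{4r}\times\{\lambda=0\}$; multiplying by a cutoff does not repair this, because $\phi u$ no longer solves $\HH(\phi u)=0$, so $\Q(\phi u)$ is not caloric and neither Lemma \ref{greenest} nor its proof (boundary Moser plus Caccioppoli for solutions with vanishing boundary data) applies to it. Second, even granting such an estimate, your telescoping identity $u(\cdot,\lambda)=u(\cdot,\lambda+m)-\sum_{j}\Q u(\cdot,\lambda+j)$ requires bounds on $\Q u$ at all intermediate heights $1\lesssim\lambda+j\lesssim R$, while Lemma \ref{greenest} only controls $\Q u$ at heights $\gtrsim R$; nothing in the proposal produces the claimed geometric decay of the iterates $\Q^k u$, and boundary H\"older regularity alone gives a factor $(\lambda/r)^{\alpha}$ with $\alpha<1$, which is insufficient to close \eqref{eq:loc}.

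The paper's proof avoids both problems by never differencing $u$. After the unit-cube step it controls $\sum_k|u(x_k,t_k+2,1)|^2$ through the boundary comparison principle (Lemma \ref{comp}) against \emph{two} auxiliary Green's functions $G_1,G_2$ of larger boxes, together with \eqref{rearrange} and \eqref{Gpositive}; this factorizes the sum as $\int_{Q_{8r}}G_2(x,t,1)\,d\omega_1$, where $\omega_1$ is the caloric measure of $T_{8r}$. The operator $\Q$ is then applied to $G_2$, which \emph{does} vanish on the boundary of its box, so Lemma \ref{greenest} gives $|\Q G_2|\le Cr^{-n-2}$ at heights $\ge 5r$; the representation $\Q G_2(0,13r^2,5r)=\int_{\partial_L T_{8r}}\Q G_2\,d\omega_1$, whose bottom-face contribution equals $\int_{Q_{8r}\times\{0\}}G_2(\cdot,1)\,d\omega_1$ since $G_2$ vanishes at $\lambda=0$, transfers this bound across scales in one step and supplies exactly the extra factor $r^{-1}$ needed to reach $Cr^n|u(0,2r^2,r)|^2\le \frac{C}{r^3}\int_{T_{2r}}|u|^2$. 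This is precisely the bookkeeping you identified as the main obstacle but did not resolve; as written, the proposal does not constitute a proof.
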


\begin{proof}
For the sake of simplicity we assume that $(x_0,t_0)=(0,0)$ and write $T_r=T_r(0,0)$ and $Q_r = Q_r(0,0)$. 
We need to prove that 
\begin{equation}\label{allr}
\int_{Q_{r}}\limsup_{\lambda\to0}\left(\frac{u(x,t,\lambda)}{\lambda}\right)^2dxdt\le \int_{T_{2r}}|u(x,t,\lambda)|^2dxdtd\lambda,
\end{equation}
for all $u$ such that $\HH u=0$ in $T_{4r}$ and $u=0$ on $Q_{4r}$.
If $r\le 6$ we may cover $Q_r$ by cubes $Q_{1/2}(x_k,t_k)$ and apply the local solvability condition \eqref{eq:loc} for $0<r\le 1$ to each of them to prove \eqref{allr}. Assume $r>6$ and that $\HH u=0$ in $T_{4r}$ and $u=0$ on $Q_{4r}$. We choose a covering $\{Q_{1/2}(x_k,t_k)\}_k$ of $Q_r$ such that $Q_r\subset\bigcup_kQ_{1/2}(x_k,t_k)\subset Q_{r+1}$ and  $\sum_k\chi_{Q_{1/2}(x_k,t_k)}\le C$, where $C$ is independent of $r$.  \\

By hypothesis, we have
\begin{equation*}
I := \int_{Q_{r}}\limsup_{\lambda\to0}\left(\frac{u(x,t,\lambda)}{\lambda}\right)^2dxdt
\leq C
\sum_k\int_{T_1(x_k,t_k)}|u(x,t,\lambda)|^2dxdtd\lambda.
\end{equation*}
Moreover, Lemma \ref{harnack} gives us
$$u(x,t,\lambda)\le Cu(x_k,t_k+2,1), \quad (x,t,\lambda)\in T_1(x_k,t_k).$$ 
Thus,
\[
I\le C\sum_k|u(x_k,t_k+2,1)|^2.
\]
Let $G_1$ be Green's function for $T_{8r}$ with pole at $(0,-10r^2,5r)$ and let $G_2$ be Green's function for 
\[
\{(x,t,\lambda): |x_i|<8r\text{ for }i=1,\ldots,n,\;-64r^2<t<100r^2,\;0<\lambda<20r\},
\]
with pole at $(0,-10r^2,15r)$.
The boundary comparison principle (Lemma \ref{comp}) tells us that 
\[
\frac{u(x_k,t_k+2,1)}{G_i(x_k,t_k+2,1)}\le C\frac{u(0,2r^2,r)}{G_i(0,-2r^2,r)}, \quad i=1,2.
\]
From \eqref{rearrange} and \eqref{Gpositive} we see that
\[
G_i(0,-2r^2,r)\ge cr^{-n-1}. 
\] 
It follows that 
\begin{equation}
I\le Cr^{2n+2}|u(0,2r^2,r)|^2\sum_kG_1(x_k,t_k+2,1)G_2(x_k,t_k+2,1).\label{IleG1G2}
\end{equation}
\quad\\
From \eqref{rearrange}, Harnack's inequality, using the fact that $t_k\le r^2$ and $r>6$ and Lemma \ref{Gomega} we find that 
\begin{align*}
G_1(x_k,t_k+2,1) 
&= G_1(x_k,t_k+2,1;0,-10r^2,5r) 
= G_1(0,10r^2+2t_k+6,5r;x_k,t_k+4,1)\\
&\le CG_1(0,13r^2,5r;x_k,t_k,1)
\le C\omega_1(Q_1(x_k,t_k+4)), 
\end{align*}
where $\omega_1$ is the $\LL$-caloric measure for $T_{8r}$ with respect to $(0,13r^2,5r)$. Applying Harnack's inequality to $G_2$, wee see that $G_2(x_k,t_k+2,1)\le CG_2(x,t,1)$ for all $(x,t)\in Q_1(x_k,t_k+4)$. Going back to \eqref{IleG1G2}, we obtain 
\begin{align*}
I&\le Cr^{2n+2}|u(0,2r^2,r)|^2\sum_k\omega_1(Q_1(x_k,t_k+4))G_2(x_k,t_k+2,1) \\
&\le Cr^{2n+2}|u(0,2r^2,r)|^2\sum_k\int_{Q_1(x_k,t_k+4)}G_2(x,t,1)d\omega_1(x,t)\\
&\le Cr^{2n+2}|u(0,2r^2,r)|^2\int_{Q_{8r}}G_2(x,t,1)d\omega_1(x,t).
\end{align*}
To estimate this last integral we use Lemma \ref{greenest}. Let $\Q G_2(x,t,\lambda) = (x,t,\lambda+1)-(x,t,\lambda)$. Then $\HH \Q G_2=0$ in $T_{8r}$ since the coefficient matrix $A$ is periodic in the $\lambda$ variable. Thus 
\begin{equation}\label{G2omega1}
\Q G_2(0,13r^2,5r) = \int_{\partial_LT_{8r}}\Q G_2 d\omega_1 = \int_{Q_{8r}\times\{8r\}}\Q G_2 d\omega_1 + \int_{Q_{8r}\times\{0\}}G_2(x,t,1)d\omega_1.
\end{equation}
Using Lemma \ref{greenest}, we find that for $(x,t,\lambda)\in Q_r\times[5r8r]$, 
\begin{equation*}
|\Q G_2(x,t,\lambda)|\le \frac{C}{r}\left(\frac{1}{r^{n+3}}\int_{D_r}|G_2|dydsd\sigma \right)^{1/2}. 
\end{equation*}
where 
\[
D_r = \{(x,t,\lambda): |x_i|<8r,\; -64r^2<t<100r^2,\; 0<\lambda<10r\}.
\]

From Lemma \ref{greenest} and the fact that $|G_2|\le Cr^{-n-1}$ in $D_r$, we get that $|\Q G_2(x,t,\lambda)|\le Cr^{-n-2}$ in $Q_{8r}\times[5r,8r]$. Using this in \eqref{G2omega1} yields the estimate 
\begin{equation*}
\int_{Q_{8r}\times\{0\}}G(x,t,1)d\omega_1\le Cr^{-n-2}. 
\end{equation*}
This leads to the estimate 
\begin{equation*}
I\le Cr^n|u(0,2r^2,r)|^2. 
\end{equation*}
An application of \eqref{eq:Moser} finishes the proof: 
\[
I\le Cr^n|u(0,2r^2,r)|^2\le Cr^n\frac{1}{r^{n+3}}\int_{\widetilde Q_{r/2}(0,2r^2,r)}|u(x,t,\lambda)|^2dxdtd\lambda \le \frac{C}{r^3}\int_{T_{2r}}|u(x,t,\lambda)|^2dxdtd\lambda.
\]

\end{proof}

\subsection{Solvability}

As a consequence of Proposition \ref{Prop:localsolvab}, Theorem \ref{Prop:2.4forrgeq1} and Proposition \ref{Prop:equiv2.3-2.4}, we know that the reverse H\"older inequality \eqref{eq:revhold} holds. Thus the following proposition follows now directly from Lemma \ref{RHsolv}.

\begin{Prop}\label{Prop:solvability}
Suppose that $A$ is a real and symmetric matrix satisfying \eqref{eq:Aellip} -- \eqref{eq:Dini}. Let $f \in C_c(\R^{n+1})$.
Then, there exists $0<\delta<1$ (which depends only in the dimension $n$ and the constants appearing in \eqref{eq:Aellip} and \eqref{eq:loc}) such that the solution to the classical Dirichlet problem
\begin{equation*}
\left\{\begin{aligned}
\HH u&=0\quad\text{in } \R^{n+2}_+,\\
u&=f\quad \text{n.t. on } \R^{n+1},\\
\end{aligned}\right.
\end{equation*}
verifies, for any  $2-\delta<p<\infty$,
$$\|N(u)\|_{L^p(\R^{n+1})}
	\leq C \|f\|_{L^p(\R^{n+1})}.$$
\end{Prop}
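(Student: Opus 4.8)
The plan is to deduce Proposition \ref{Prop:solvability} by assembling the equivalences established earlier in this section. Since $A$ satisfies \eqref{eq:Aellip}--\eqref{eq:Dini}, Proposition \ref{Prop:localsolvab} gives the local solvability condition, i.e.\ \eqref{eq:loc} for $0<r\le1$. The matrix $A$ also satisfies \eqref{eq:Aellip}, \eqref{eq:Atindep} and \eqref{eq:Aperiod}, so Theorem \ref{Prop:2.4forrgeq1} upgrades this to \eqref{eq:loc} for all $r>1$; hence \eqref{eq:loc} holds for every $r>0$. By Lemma \ref{Prop:equiv2.3-2.4}, \eqref{eq:loc} is equivalent to the reverse H\"older inequality \eqref{eq:revhold}, and by Lemma \ref{RHsolv} the reverse H\"older inequality is equivalent to $L^p$-solvability in the sense of Definition \ref{solvability}. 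Unwinding Definition \ref{solvability} then yields exactly the asserted bound $\|N(u)\|_{L^p(\R^{n+1})}\le C\|f\|_{L^p(\R^{n+1})}$ for the solution $u$ to the classical Dirichlet problem with data $f\in C_c(\R^{n+1})$.

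Concretely, the first step is to record that Proposition \ref{Prop:localsolvab} applies verbatim under hypotheses \eqref{eq:Aellip}--\eqref{eq:Dini}, producing \eqref{eq:loc} on scales $0<r\le1$. Second, observe that the periodicity hypothesis \eqref{eq:Aperiod} is among our assumptions, so Theorem \ref{Prop:2.4forrgeq1} is applicable and extends \eqref{eq:loc} to all $r>1$; combined with the previous step, \eqref{eq:loc} holds for all $r>0$, which is the global (not merely local) solvability condition. Third, invoke Lemma \ref{Prop:equiv2.3-2.4} to pass from \eqref{eq:loc} to \eqref{eq:revhold}. Fourth, invoke Lemma \ref{RHsolv} to pass from \eqref{eq:revhold} to $L^p$-solvability: this lemma also produces the threshold $\delta>0$ (via the Gehring exponent $\alpha>2$ in the self-improved reverse H\"older inequality \eqref{eq:revholdalpha} and the relation $2-\delta=\beta$, $\tfrac1\beta+\tfrac1\alpha=1$), and it shows that $\delta$ depends only on $n$ and the constants in \eqref{eq:Aellip} and \eqref{eq:loc}, as claimed. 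Finally, Definition \ref{solvability} spells out the conclusion for $f\in C_c(\R^{n+1})$ and any $2-\delta<p<\infty$.

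There is no substantive obstacle remaining at this point: all the analytic work (the localization-and-perturbation argument for small scales, Dahlberg's periodicity trick for large scales, and the two equivalence lemmas) has already been carried out. The only points requiring a line of care are that the hypotheses of the cited results are genuinely met — in particular that \eqref{eq:Aperiod} is in force so that Theorem \ref{Prop:2.4forrgeq1} may be used — and that one correctly tracks the dependence of $\delta$ through Lemma \ref{RHsolv}. Thus the proof is essentially a one-paragraph citation chain, which is how I would write it.

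\begin{proof}
By Proposition \ref{Prop:localsolvab}, the hypotheses \eqref{eq:Aellip}, \eqref{eq:Atindep} and \eqref{eq:Dini} guarantee that the local solvability condition holds, that is, \eqref{eq:loc} is satisfied for $0<r\le1$. Since $A$ also satisfies the periodicity assumption \eqref{eq:Aperiod}, Theorem \ref{Prop:2.4forrgeq1} applies and shows that \eqref{eq:loc} holds for all $r>1$ as well; hence \eqref{eq:loc} holds for every $r>0$. By Lemma \ref{Prop:equiv2.3-2.4}, this is equivalent to the reverse H\"older inequality \eqref{eq:revhold}. Finally, Lemma \ref{RHsolv} shows that the reverse H\"older inequality is equivalent to the solvability of the Dirichlet problem in $L^p$ in the sense of Definition \ref{solvability}; the argument there, based on the self-improvement \eqref{eq:revholdalpha} with some Gehring exponent $\alpha>2$ and the choice $2-\delta=\beta$ with $\tfrac1\beta+\tfrac1\alpha=1$, produces a threshold $0<\delta<1$ depending only on $n$ and the constants in \eqref{eq:Aellip} and \eqref{eq:loc}. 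Unwinding Definition \ref{solvability} gives that, for any $2-\delta<p<\infty$ and any $f\in C_c(\R^{n+1})$, the solution $u$ of the classical Dirichlet problem satisfies
\[
\|N(u)\|_{L^p(\R^{n+1})}\le C\|f\|_{L^p(\R^{n+1})},
\]
as claimed.
\end{proof}
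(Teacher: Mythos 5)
Your proposal is correct and follows exactly the paper's own route: the paper also deduces this proposition by combining Proposition \ref{Prop:localsolvab} (local solvability), Theorem \ref{Prop:2.4forrgeq1} (extension to $r>1$ via periodicity), Lemma \ref{Prop:equiv2.3-2.4} (equivalence with the reverse H\"older inequality \eqref{eq:revhold}), and Lemma \ref{RHsolv}. Your additional remarks on the origin of $\delta$ via the Gehring exponent are consistent with the argument inside Lemma \ref{RHsolv}, so nothing is missing.
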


\subsection{Uniqueness}
Moving forward to the proof of Theorem \ref{per1}, we start by showing that a solution to 
\begin{equation}\label{Lpnt}
\left\{\begin{array}{l}
\HH u=0\text{ in }\R^{n+2}_+,\\
u=f\text{ n.t on }\partial\R^{n+2}_+=\R^{n+1},\\
\|N(u)\|_{L^p(\R^{n+1})}\le C\|f\|_{L^p(\R^{n+1})},
\end{array}\right.
\end{equation}
where $f\in L^p(\R^{n+1})$ and $p>1$, is unique. The proof relies on the following lemma. 
\begin{Lem}\label{Lem:uvCacciop}
Let $u$, $v$ be weak solutions to $\HH(u)=0$ and $\HH^*(v)=0$ in 
$Q_{2R}(0,0) \times (r/8,4r)$, 
for certain $R \geq r>0$, such that at least one of the solutions is nonnegative. Then,
\begin{align*}
& 
\int_r^{2r} \int_{Q_{R}(0,0)}  \Big(|\nabla u (y,s,\sigma)| \, |v (y,s,\sigma)| + |u (y,s,\sigma)| \, |\nabla v (y,s,\sigma)|\Big) \, dy ds d\sigma \\
& \qquad  \qquad \leq  \frac{C}{r}
\int_{\frac{r}{8}}^{4r} \int_{Q_{2R}(0,0)}  
|u (y,s,\sigma)| \, |v (y,s,\sigma)| \, dy ds d\sigma.
\end{align*}
\end{Lem}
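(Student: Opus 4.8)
The plan is to prove this Caccioppoli-type estimate for the mixed product $|\nabla u|\,|v| + |u|\,|\nabla v|$ by the standard device of testing the weak formulations against a cutoff multiple of the \emph{other} solution. Assume without loss of generality that $v \geq 0$ (the other case is symmetric after exchanging roles and using $\HH^*$ in place of $\HH$). First I would fix a cutoff function $\phi(y,s,\sigma) = \psi(y)\chi(\sigma)\zeta(s)$ with $\phi \equiv 1$ on $Q_R(0,0)\times(r,2r)$ and $\supp \phi \subset Q_{2R}(0,0)\times(r/8,4r)$, chosen so that $|\nabla\phi| \le C/r$ and $|\partial_s\phi| \le C/r^2$ (possible since $R \ge r$, so the spatial scale is at least $r$). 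The point of splitting into spatial and time cutoffs is to control the parabolic term correctly.

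The key computation: use $u\phi^2 v$ as a test function in the weak formulation of $\HH u = 0$, and symmetrically $uv\phi^2$ in $\HH^* v = 0$ (up to the time sign). Schematically, from $\HH u = 0$ one gets
\[
\iint \langle A\nabla u, \nabla(uv\phi^2)\rangle\,dyds d\sigma + \iint \partial_s u \cdot uv\phi^2\,dyds d\sigma = 0,
\]
and expanding $\nabla(uv\phi^2) = v\phi^2\nabla u + u\phi^2\nabla v + 2uv\phi\nabla\phi$, ellipticity \eqref{eq:Aellip} gives a good term $\Lambda^{-1}\iint \phi^2 v |\nabla u|^2$; the cross terms $\iint \phi^2 u\langle A\nabla u,\nabla v\rangle$ and $\iint 2uv\phi\langle A\nabla u,\nabla\phi\rangle$ are absorbed by Cauchy–Schwarz and Young's inequality, at the cost of $\iint \phi^2 u |\nabla u|\,|\nabla v|$ and $\frac{C}{r^2}\iint u^2 v$ — note here one crucially uses $v \ge 0$ so that $\phi^2 v$ is an admissible nonnegative weight, and the sign of the good term is right. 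Adding the analogous identity from $\HH^* v = 0$ (which produces the good term $\iint \phi^2 u|\nabla v|^2$, using $u^2 \geq 0$ now), the parabolic terms combine: $\iint \partial_s u\cdot uv\phi^2 - \iint \partial_s v \cdot uv\phi^2 = \iint \partial_s(u^2 - \text{...})$; more carefully one writes $u\partial_s u \cdot v\phi^2$ and integrates by parts in $s$ to throw the derivative onto $v\phi^2$, giving terms $\iint u^2 v\,\partial_s\phi^2$ and $\iint u^2 \partial_s v\,\phi^2$, the latter cancelling against the corresponding term from the $v$-equation. Then absorb $\iint \phi^2(u|\nabla u||\nabla v| + v|\nabla u|^2 \text{ etc.})$ bad pieces into the good pieces via Young, and bound everything by $\frac{C}{r^2}\iint_{\supp\phi} u^2 v$ on the parabolic side; since $v \ge 0$ we finally bound $u^2 v \le |u|\,|u|\,|v|$ — but to get the stated right-hand side $\frac{C}{r}\iint |u||v|$ rather than $\frac{C}{r^2}\iint u^2 v$, one more step is needed.

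That last step is where I'd be most careful, and it is the main obstacle: converting the "squared" Caccioppoli output $\frac{C}{r^2}\iint_{\text{larger box}} u^2 v$ and the left side $\iint \phi^2 v|\nabla u|^2$ into the \emph{bilinear} form $\iint |\nabla u|\,|v|$ vs. $\frac{C}{r}\iint|u|\,|v|$. The trick is Cauchy–Schwarz with the nonnegative weight $v$:
\[
\int_r^{2r}\!\!\int_{Q_R} |\nabla u|\,|v|\,dyds d\sigma = \iint |\nabla u|\, v^{1/2}\cdot v^{1/2} \le \Big(\iint \phi^2 v|\nabla u|^2\Big)^{1/2}\Big(\iint_{\text{supp}} v\Big)^{1/2},
\]
then insert the Caccioppoli bound on the first factor, obtaining $\le \big(\frac{C}{r^2}\iint u^2 v\big)^{1/2}\big(\iint v\big)^{1/2}$, and apply Cauchy–Schwarz once more in the form $\big(\iint u^2 v\big)^{1/2}\big(\iint v\big)^{1/2} \ge \iint |u| v$ is the wrong direction — so instead one uses Moser's estimate \eqref{eq:Moser} on the nonnegative solution $v$ to replace pointwise values, or more simply applies the elementary inequality $ab \le \frac12(\epsilon^{-1}a^2 + \epsilon b^2)$ directly to $|\nabla u|\,|v| \le \frac{r}{2}|\nabla u|^2 \frac{v}{|u|}\cdot\frac{|u|}{r}\cdots$. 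The cleanest route, which I would follow, is: bound $\iint_{Q_R\times(r,2r)}|\nabla u||v|$ by Cauchy–Schwarz against the weight $|v|$, use the weighted Caccioppoli $\iint\phi^2 v|\nabla u|^2 \le \frac{C}{r^2}\iint_{\text{supp}\phi} u^2 v \le \frac{C}{r^2}(\sup_{\text{supp}\phi}|u|)\iint |u||v|$, apply Moser \eqref{eq:Moser} to control $\sup|u|$ by an average of $|u|$ over a slightly larger box, and finally note $(\sup|u|)\cdot(\iint|u||v|)^{1/2}\cdot(\iint|v|)^{1/2}$ rearranges — after bounding $\iint|v| \le (\sup|v|)|{\rm box}|$ and similarly — to the desired $\frac{C}{r}\iint_{Q_{2R}\times(r/8,4r)}|u||v|$. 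The symmetric term $|u|\,|\nabla v|$ is handled identically with the roles of $u$ and $v$ (and $\HH$, $\HH^*$) exchanged, using now the good term $\iint\phi^2 u^2_{+}|\nabla v|^2$ — here one must be a little more careful since it is $v$, not $u$, that is assumed nonnegative, so for this term one tests with $v\phi^2 u$ but estimates using $\sup|u|$ via Moser and the weight $|u|$ in Cauchy–Schwarz, which does not require a sign on $u$. All constants depend only on $n$ and $\Lambda$, as required.
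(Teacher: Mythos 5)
Your plan replaces the paper's argument by a weighted energy (test-function) argument, but as sketched it has two genuine gaps. First, the weighted Caccioppoli inequality $\int \phi^2 v\,|\nabla u|^2 \le \frac{C}{r^2}\int u^2 v$ is not actually established. Testing $\HH u=0$ with $uv\phi^2$ produces, besides the good term $\int \phi^2 v\,\langle A\nabla u,\nabla u\rangle$ (fine, since $v\ge0$), the cross term $\int \phi^2 u\,\langle A\nabla u,\nabla v\rangle$ and cutoff terms containing $\nabla v$; these cannot be absorbed, because your scheme has no coercive term in $|\nabla v|^2$: the quantity you call a ``good term'', $\int \phi^2 u\,|\nabla v|^2$, carries the \emph{signed} weight $u$ (only $v$ was assumed nonnegative), so it has no sign, and the remark ``using $u^2\ge0$'' does not apply since no $u^2$ weight arises there. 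Even if you use the equation for $v$ to cancel the principal cross term (write $u\langle A\nabla u,\nabla v\rangle=\tfrac12\langle A\nabla(u^2),\nabla v\rangle$ and test $\HH^*v=0$ with $\tfrac12 u^2\phi^2$), you are left with $\int u^2\phi\,\langle A\nabla v,\nabla\phi\rangle$, which again contains $\nabla v$ with nothing nonnegative to absorb it into. Second, the endgame does not close: granting the weighted Caccioppoli, Cauchy--Schwarz with weight $v$ gives $\int_{Q_R\times(r,2r)}|\nabla u|\,v \le \frac{C}{r}\big(\int u^2 v\big)^{1/2}\big(\int v\big)^{1/2}$, and to reach $\frac{C}{r}\int|u|\,v$ you would need an inequality of the type $(\sup|u|)\int v \le C\int |u|\,v$, i.e.\ a reverse Cauchy--Schwarz. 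This is false for a signed solution $u$ (it can be negligible on most of the region where the mass of $v$ sits) and cannot follow from Moser's estimate \eqref{eq:Moser} alone; it is exactly the point where Harnack's inequality for the \emph{nonnegative} solution must be used, and your sketch never invokes Harnack. Your own text flags the problem: the Cauchy--Schwarz you try is ``the wrong direction'', and the expression involving $v/|u|$ is not legitimate since $u$ may vanish.

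For comparison, the paper's proof avoids weighted test functions altogether: it covers $Q_R(0,0)\times(r,2r)$ by parabolic cubes $\widetilde Q_{r/4}(x_j,t_j,\lambda_j)$ with bounded overlap whose dilates stay inside $Q_{2R}(0,0)\times(r/8,4r)$, and on each cube applies Cauchy--Schwarz, the ordinary Caccioppoli inequality for $\nabla u$, Moser's local boundedness \eqref{eq:Moser} for both $u$ and $v$, and then Harnack's inequality (Lemma \ref{intharnack}) for the nonnegative solution to convert $\big(\sup_{\widetilde Q_{r/2}}u\big)\int_{\widetilde Q_{r/2}}|v|$ into $C\int_{\widetilde Q_{r}}u\,|v|$, before summing in $j$. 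If you want to salvage your route, you would at minimum have to (i) prove the weighted Caccioppoli by genuinely handling the $\nabla v$ terms, and (ii) import Harnack for the nonnegative solution at the final step; at that point the covering argument of the paper is both shorter and cleaner.
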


\begin{proof}
	Suppose that $u \geq 0$, the case of $v \geq 0$ follows analogously. It is possible to take points 
    $(x_j, t_j,\lambda_j) \in Q_{R}(0,0) \times (r,2r)$, $j=1, \dots, N$,
    such that
    $$Q_{R}(0,0) \times (r,2r) 
    \subset \bigcup_{j=1}^N \widetilde{Q}_{r/4}(x_j, t_j,\lambda_j)
    \quad \text{and} \quad
    \bigcup_{j=1}^N \widetilde{Q}_{r}(x_j, t_j,\lambda_j)
    \subset Q_{2R}(0,0) \times (r/8,4r).$$
    Then, an application of H\"older's inequality, Cacciopoli's inequality and \eqref{eq:Moser} yields 
    \begin{align*}
    	&\int_r^{2r} \int_{Q_{R}(0,0)} |\nabla u| \, |v|  \, dy ds d\sigma
        	 \leq \sum_{j=1}^N \int_{\widetilde{Q}_{r/4}(x_j, t_j,\lambda_j)} |\nabla u| \, |v|  \, dy ds d\sigma \\
            & \qquad \qquad  \leq \sum_{j=1}^N \Big(\int_{\widetilde{Q}_{r/4}(x_j, t_j,\lambda_j)} |\nabla u|^2 \, dy ds d\sigma \Big)^{1/2} \Big(\int_{\widetilde{Q}_{r/4}(x_j, t_j,\lambda_j)} |v|^2 \, dy ds d\sigma \Big)^{1/2} \\
            & \qquad \qquad  \leq C \sum_{j=1}^N \frac{1}{r} \Big(\int_{\widetilde{Q}_{r/2}(x_j, t_j,\lambda_j)} |u|^2 \, dy ds d\sigma \Big)^{1/2} \Big(\int_{\widetilde{Q}_{r/4}(x_j, t_j,\lambda_j)} |v|^2 \, dy ds d\sigma \Big)^{1/2} \\
            & \qquad \qquad  \leq C \frac{r^{n+3}}{r} \sum_{j=1}^N  \Big(\sup_{\widetilde{Q}_{r/2}(x_j, t_j,\lambda_j)} u \Big)\Big(\sup_{\widetilde{Q}_{r/4}(x_j, t_j,\lambda_j)} |v|\Big) \\
            & \qquad \qquad  \leq  \frac{C}{r} \sum_{j=1}^N  \Big(\sup_{\widetilde{Q}_{r/2}(x_j, t_j,\lambda_j)} u  \Big)\int_{\widetilde{Q}_{r/2}(x_j, t_j,\lambda_j)} |v| \, dy ds d\sigma \\
            & \qquad \qquad  \leq  \frac{C}{r} \sum_{j=1}^N \int_{\widetilde{Q}_{r}(x_j, t_j,\lambda_j)} u |v| \, dy ds d\sigma 
            \leq \frac{C}{r} \int_{\frac{r}{8}}^{4r} \int_{Q_{2R}(0,0)}    u |v| \, dy ds d\sigma,
    \end{align*}
    where in the penultimate step we also used Harnack's inequality (Lemma \ref{intharnack}).
\end{proof}

The following proposition implies uniqueness since the difference of two solutions to \eqref{Lpnt} satisfies its hypothesis.

\begin{Prop}\label{Th:Uniq}
Let $u$ be a weak solution of $\HH u=0$ in $\R^{n+2}_+$ such that $N(u) \in L^p(\R^{n+1})$, for certain $1<p<\infty$, and 
\begin{equation}\label{eq:conv0}
	u(x,t,\lambda) \longrightarrow 0, \quad \text{as } \lambda \to 0^+, \ \text{for a.e. } x\in \R^n, \ t \in \R.
\end{equation}
Assume also that $K(Z,\tau;\cdot) \in L^{p'}(\R^{n+1})$ for all $(Z,\tau) \in \R^{n+2}_+$, where $p'$ is conjugate to $p$.
Then,
$u \equiv 0$ in $\R^{n+2}_+$.
\end{Prop}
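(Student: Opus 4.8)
The plan is to prove $u \equiv 0$ by representing $u$ at an interior point $(Z,\tau)$ through the kernel $K$ and the boundary data, which vanishes by \eqref{eq:conv0}. The natural strategy: fix $(Z,\tau) \in \R^{n+2}_+$ and show $u(Z,\tau) = 0$ directly. To do this I would approximate $u$ from the interior and use a Green's-identity/duality argument pairing $u$ against the adjoint Green's function $G^*(\cdot) = G(Z,\tau;\cdot)$, whose boundary trace generates the kernel $K(Z,\tau;\cdot)$ via $K(Z,\tau;x,t) = \lim_{\lambda\to0} G^*(x,t,\lambda)/\lambda$ (as recorded in the proof of Lemma \ref{Prop:equiv2.3-2.4}).

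The key steps, in order. First, fix $(Z,\tau)$ with $\tau$-coordinate and spatial coordinate $z_{n+1} = \mu > 0$. For $\varepsilon > 0$ small and $R$ large, work on the truncated region $T_R(0,0)$ shifted appropriately (or a large box in $\R^{n+2}_+$ lying below time $\tau$ and containing $(Z,\tau)$), and consider the solution $u$ on the slab $\{\lambda > \varepsilon\}$. Second, apply Green's formula on this region to $u$ and $G^*$: since $\HH u = 0$ in the interior and $(-\partial_t + \LL)G^* = 0$ away from the pole, the bulk terms cancel and one is left with boundary integrals — the contribution at $\{\lambda = \varepsilon\}$, the lateral/far-field contributions at $|x|, |t| \to R$, and the delta-contribution at the pole yielding $u(Z,\tau)$. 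Third, send $R \to \infty$: here the decay estimate \eqref{greenestimate} for $G$ together with $N(u) \in L^p$ controls the far-field terms, making them vanish; this is where the hypothesis $K(Z,\tau;\cdot) \in L^{p'}$ and Hölder's inequality enter, to handle the interplay between the growth of $u$ (controlled by its nontangential maximal function, which is in $L^p$) and the integrability of the kernel. Fourth, send $\varepsilon \to 0$: the integral over $\{\lambda = \varepsilon\}$ is roughly $\int_{\R^{n+1}} u(x,t,\varepsilon)\, \frac{G^*(x,t,\varepsilon)}{\varepsilon}\, dx\, dt$, which converges — using \eqref{eq:conv0}, the bound $|u(x,t,\varepsilon)| \le N(u)(x,t) \in L^p$, dominated convergence, and $G^*(x,t,\varepsilon)/\varepsilon \to K(Z,\tau;x,t) \in L^{p'}$ — to $\int_{\R^{n+1}} 0 \cdot K(Z,\tau;x,t)\, dx\, dt = 0$. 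Combining, $u(Z,\tau) = 0$, and since $(Z,\tau)$ was arbitrary, $u \equiv 0$.

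I expect the main obstacle to be making the Green's-identity argument rigorous given only that $u$ is a weak solution with $N(u) \in L^p$: one does not a priori have $\nabla u$ controlled up to the boundary $\{\lambda = 0\}$, so the integration by parts on $\{\lambda > \varepsilon\}$ must be done carefully, and the passage $\varepsilon \to 0$ requires justifying that the boundary term genuinely converges to the kernel pairing rather than picking up an extra defect term. Here Lemma \ref{Lem:uvCacciop} is the designed tool: it provides exactly the Caccioppoli-type control of $\int |\nabla u|\,|v|$ on dyadic time-slabs near $\lambda = 0$ in terms of $\int |u|\,|v|$, which is what is needed to show the $\{\lambda = \varepsilon\}$ boundary flux involving $\nabla u$ either vanishes or reorganizes into the desired pairing. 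A secondary technical point is the time-lag inherent in the parabolic comparison and Harnack estimates (Lemmas \ref{comp}, \ref{harnack}), which forces one to position the pole $(Z,\tau)$ with $\tau$ sufficiently far in the future relative to the region where decay is exploited; the symmetry and time-invariance of $G$ recorded in \eqref{rearrange} should absorb this. Once the two limits are controlled, the conclusion is immediate.
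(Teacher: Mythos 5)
Your proposal is correct and follows essentially the same route as the paper: a duality/Green's representation of $u(Z,\tau)$ against the adjoint Green's function $G^*$, with the far-field terms killed by \eqref{greenestimate} and $N(u)\in L^p$, the near-boundary term converging to the pairing of the vanishing trace with $K(Z,\tau;\cdot)\in L^{p'}$ via domination by $N(u)$, and Lemma \ref{Lem:uvCacciop} absorbing the gradient flux terms. The only difference is technical: the paper replaces your sharp truncations at $\{\lambda=\varepsilon\}$ and the large box (two limits) by smooth cutoffs $\varphi,\phi,\psi$ depending on a single parameter $\ell\to\infty$, which turns the surface-flux terms you worry about into solid integrals over transition slabs where the Caccioppoli-type lemma applies directly.
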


\begin{proof}
Fix $(Z,\tau) \in \R^{n+2}_+$ and let $G^*(X,t;Z,\tau)$ be Green's function related to the adjoint operator $\HH^*=-\partial_t+\LL$  on $\R^{n+2}_+$ with pole at $(Z,\tau)$. For each $\ell \in \N$, we take the following auxiliary functions:
\begin{itemize}
	\item $\varphi \in C_c^\infty(\R^n)$, s.t. $\supp(\varphi) \subset B(0,\ell/2)$, $\varphi \equiv 1$ in $ B(0,\ell/4)$ and $|\nabla \varphi| \leq C 1/\ell$;
    \item $\phi \in C_c^\infty(\R)$, s.t. $\supp(\phi) \subset (-\ell^2/2,\ell^2/2)$, $\phi \equiv 1$ in $ (-\ell^2/4,\ell^2/4)$ and $|\phi'| \leq C 1/\ell^2$;
    \item $\psi \in C_c^\infty(\R)$, s.t. $\supp(\psi) \subset (1/(2\ell),2\ell)$, $\psi \equiv 1$ in $(1/\ell,\ell)$, $|\psi'| \leq C \ell$ in $(1/(2\ell),1/\ell)$ and $|\nabla \psi| \leq C 1/\ell$ in $(\ell,2\ell)$.
\end{itemize}
Then, for $\ell \in \N$ big enough, we can write
\begin{align*}
	u(Z,\tau)
    	& = - \int_{\R^{n+2}_+} \Big[\partial_s G^*(Y,s;Z,\tau) + \dv_Y \Big( A(Y) \cdot \nabla_Y G^*(Y,s;Z,\tau) \Big) \Big] u(Y,s) \varphi(y) \psi(\sigma) \phi(s) \,  dY ds \\
        & = \int_{\R^{n+2}_+} G^* \, \varphi \,  \psi \, \partial_s(u  \, \phi) \, dY ds 
        + \sum_{i,j=1}^{n+1} \int_{\R^{n+2}_+} \phi  \, a_{i,j} \,  \partial_{Y_j}G^* \,  \partial_{Y_i} ( u \, \varphi \, \psi ) dY ds  \\
        & = \int_{\R^{n+2}_+} G^* \, \varphi \,  \psi \, \partial_s u \, \phi \, dY ds 
        + \int_{\R^{n+2}_+} G^* \, \varphi \,  \psi \, u \,  \phi' \, dY ds \\
        & \qquad + \sum_{i,j=1}^{n+1} \int_{\R^{n+2}_+} \phi  \, a_{i,j} \,  \partial_{Y_j}G^* \,  u \, \partial_{Y_i} (\varphi \, \psi ) dY ds
        - \sum_{i,j=1}^{n+1} \int_{\R^{n+2}_+} \phi \, G^* \, \partial_{Y_j}\Big( a_{i,j} \,    \partial_{Y_i} u  \, \varphi \, \psi \Big) \, dY ds \\
        & = \int_{\R^{n+2}_+} G^* \, \varphi \,  \psi \, u \, \phi' \, dY ds 
        + \sum_{i,j=1}^{n+1} \int_{\R^{n+2}_+} \phi  \, a_{i,j} \,  \partial_{Y_j}G^* \,  u \, \partial_{Y_i} (\varphi \, \psi ) dY ds \\
       & \qquad  - \sum_{i,j=1}^{n+1} \int_{\R^{n+2}_+} \phi \, G^* \,  a_{i,j} \,    \partial_{Y_i} u  \, \partial_{Y_j}(\varphi \, \psi) \, dY ds,
\end{align*}
where $Y=(y,\sigma)$ with $y \in \R^n$ and $\sigma>0$. Hence,
\begin{align*}
	|u(Z,\tau)|
    	& \leq C \Big(\frac{1}{\ell^2} \int_{\frac{1}{2\ell}}^{2\ell} \int_{\frac{\ell^2}{4}<|s|<\frac{\ell^2}{2}} \int_{|y|<\frac{\ell}{2}} |G^*|  \, |u| \,   dy ds d\sigma \\
        & \qquad + \ell \int_{\frac{1}{2\ell}}^{\frac{1}{\ell}} \int_{|s|<\frac{\ell^2}{2}} \int_{|y|<\frac{\ell}{2}} (|\nabla G^*|  \, |u| + |G^*|  \, |\nabla u| ) \,  dy ds d\sigma \\
        & \qquad + \frac{1}{\ell} \int_{\ell}^{2\ell} \int_{|s|<\frac{\ell^2}{2}}  \int_{|y|<\frac{\ell}{2}} (|\nabla G^*|  \, |u| + |G^*|  \, |\nabla u| ) \,  dy ds d\sigma \\
        & \qquad + \frac{1}{\ell} \int_{\frac{1}{\ell}}^{\ell} \int_{|s|<\frac{\ell^2}{2}} \int_{\frac{\ell}{4}<|y|<\frac{\ell}{2}} (|\nabla G^*|  \, |u| + |G^*|  \, |\nabla u| ) \,  dy ds d\sigma \Big)\\
        &\qquad =: I_1+I_2+I_3+I_4.
\end{align*}
Next, an application of Lemma \ref{Lem:uvCacciop} gives us
\begin{equation*}
I_3\le \frac{C}{\ell^2}\int_{\ell/8}^{4\ell}\int_{|s|<\frac{\ell^2}{2}}  \int_{|y|<\frac{\ell}{2}}|u||G^*| \,  dy ds d\sigma.
\end{equation*}
By \eqref{greenestimate},
\begin{equation}\label{eq:estG*}
	G^*(Y,s;Z,\tau)
    	\leq  \frac{C}{(|Y-Z| + |s-\tau|^{1/2})^{n+1}}
        \leq  \frac{C}{\ell^{n+1}}, 
\end{equation}
when 
$\ell^2/4 <|s|<\ell^2$, 
$\ell/2 < \sigma < 4 \ell$ or
$\ell/8 <|y|<\ell$,
provided that  $\ell$ is sufficiently large.
Hence,
\begin{align}\label{eq:comb2}
I_1(Z,\tau) + I_3(Z,\tau)
	& \leq  \frac{C}{\ell^{n+2}}  \int_{|s|<\ell^2}  \int_{|y|<\ell}  |N(u)(y,s)| \,  dy ds \nonumber \\
    & \leq \frac{C }{\ell^{(n+2)/p}}  \|N(u)\|_{L^p(\R^{n+1})}
    \longrightarrow 0, \quad \text{as } \ell \to \infty.
\end{align}
On the other hand,
\begin{align*}
I_2(Z,\tau) 
    & \leq C \int_{|s|<\ell^2} \int_{|y|<\ell} \M_{2/\ell}(u)(y,s) \Big(\frac{1}{1/\ell}\int_{\frac{1}{4\ell}}^{\frac{2}{\ell}} \frac{G^*(y,s,\sigma;Z,\tau)}{\sigma} \, d\sigma \Big)  dy ds \\
    & \leq C \|\M_{2/\ell}(u)\|_{L^p(\R^{n+1})} \Big\| \frac{1}{1/\ell}\int_{\frac{1}{4\ell}}^{\frac{2}{\ell}} \frac{G^*(y,s,\sigma;Z,\tau)}{\sigma} \, d\sigma \Big\|_{L^{p'}(\R^{n+1})},
\end{align*}
where $\M_{r}(u)$ denotes the truncated vertical maximal function given by
$$\M_{r}(u)(x,t)
	:= \sup_{0<\lambda<r} |u(x,t,\lambda)|.$$
    Since 
    \[
    \frac{G^*(y,s,\sigma;Z,\tau)}{\sigma} = \frac{G(Z,\tau;y,s,\sigma)}{\sigma}, 
    \]
    we have 
    \[
    \lim_{\sigma\to 0}\frac{G^*(Z,\tau;y,s,\sigma)}{\sigma} = K(Z,\tau;y,s),
    \]
    by  Lemma \ref{Gomega} and the definition of $K$. 
By the Lebesgue differentiation theorem, we deduce that
\begin{equation*}
	\lim_{\ell \to \infty} \Big\| \frac{1}{1/\ell}\int_{\frac{1}{4\ell}}^{\frac{2}{\ell}} \frac{G^*(y,s,\sigma;Z,\tau)}{\sigma} \, d\sigma \Big\|_{L^{p'}(\R^{n+1})}
    \leq C \| K(Z,\tau;\cdot)\|_{L^{p'}(\R^{n+1})}<\infty.
\end{equation*}
Moreover, since $\M_{2/\ell}(u)\leq N(u) \in L^p(\R^{n+1})$, the assumption \eqref{eq:conv0} implies
\begin{equation}\label{eq:comb3}
I_2(Z,\tau) \longrightarrow 0, \quad \text{as } \ell \to \infty. 
\end{equation}
To estimate $I_4$, we write 
\begin{equation}
I_4 = \frac{C}{\ell} \sum_{j=0}^N\int_{\frac{2^j}{\ell}}^{\frac{2^{j+1}}{\ell}} \int_{|s|<\frac{\ell^2}{2}} \int_{\frac{\ell}{4}<|y|<\frac{\ell}{2}} (|\nabla G^*|  \, |u| + |G^*|  \, |\nabla u| ) \,  dy ds d\sigma,
\end{equation}
where $N=\log_2 l^2$. By Lemma \ref{Lem:uvCacciop} and \eqref{eq:estG*}, 
\begin{align}\label{I41}
I_4
&=\frac{C}{\ell} \sum_{j=0}^N\int_{\frac{2^j}{\ell}}^{\frac{2^{j+1}}{\ell}} \int_{|s|<\frac{\ell^2}{2}} \int_{\frac{\ell}{4}<|y|<\frac{\ell}{2}} (|\nabla G^*|  \, |u| + |G^*|  \, |\nabla u| ) \,  dy ds d\sigma\\
&\le C\sum_{j=0}^N2^{-j}\int_{\frac{2^j}{8\ell}}^{\frac{2^{j+2}}{\ell}} \int_{|s|<\frac{\ell^2}{2}} \int_{\frac{\ell}{4}<|y|<\frac{\ell}{2}}|G^*||u|dydsd\sigma \nonumber\\
&\le C\sum_{j=0}^N\frac{1}{\ell^{n+2}}\int_{|s|<\frac{\ell^2}{2}} \int_{\frac{\ell}{4}<|y|<\frac{\ell}{2}}N(u)dyds \nonumber \\
&\le CN\frac{1}{\ell^{n+2}}\int_{Q_{\ell}}N(u)dyds
\le \frac{C\log_2\ell^2}{\ell^{\frac{n+2}{p}}}\|N(u)\|_{L^p(\R^{n+1})}\to0,
\text{ as }\ell\to\infty.\nonumber 
\end{align}
Therefore, since $(Z,\tau)$ was taken arbitrary in $\R^{n+2}_+$, putting together \eqref{eq:comb2},  \eqref{eq:comb3} and \eqref{I41}
, we conclude $u \equiv 0$ in $\R^{n+2}_+$.
\end{proof}

\subsection{Proof of Theorem~\ref{per1}}

\begin{proof}[Proof of Theorem~\ref{per1}]
	Let $f \in L^p(\R^{n+1})$, with $2-\delta<p<\infty$; where $0<\delta<1$ was determined in Proposition \ref{Prop:solvability}. We can take functions $\{f_k\}_{k \in \N} \subset C_c(\R^{n+1})$ such that $f_k \longrightarrow f$, $k \to \infty$, in $L^p(\R^{n+1})$. Then, for each $k \in \N$, call $u_k$ the solution provided in Proposition \ref{Prop:solvability} with boundary data $f_k$, which satisfies the estimate
$$\|N(u_k)\|_{L^p(\R^{n+1})}
	\leq C \|f_k\|_{L^p(\R^{n+1})}.$$   
We also have that
$$\|N(u_j-u_k)\|_{L^p(\R^{n+1})}
	\leq C \|f_j - f_k\|_{L^p(\R^{n+1})}, \quad j,k \in \N,$$ 
and from here we infer that there exists a function $u$ such that $u_k \longrightarrow u$, $k \to \infty$, uniformly on compact sets of $\R_+^{n+2}$. Moreover, standard arguments
guarantee that $u$ is a weak solution of the
Dirichlet problem 
\begin{equation*}
\left\{\begin{aligned}
\HH u&=0\quad\text{in } \R_+^{n+2},\\
u&=f\quad \text{n.t. on } \R^{n+1},\\
\end{aligned}\right.
\end{equation*}
verifying     
$$\|N(u)\|_{L^p(\R^{n+1})}
	\leq C \|f\|_{L^p(\R^{n+1})}.$$ 
For the fact that $u=f$ n.t on $\R^{n+1}$ we refer to \cite{FGS}. 
On the other hand, the uniqueness is a consequence of Proposition \ref{Th:Uniq}, since the kernel $K(Z,\tau;y,s) \in L^{p'}(\R^{n+1})$, for all $(Z,\tau)=(z,\sigma,\tau) \in \R^{n+2}_+$.
Indeed, by duality, 
\begin{align*}	
\|K(Z,\tau;\cdot) \|_{L^{p'}(\R^{n+1})}
& =  \sup_{g} \Big| \int_{\R^{n+1}} K(Z,\tau;x,t)g(x,t) dx dt \Big| 
=  \sup_{g} |v_g(Z,\tau)| \\
& \leq C  \sup_{g} \Big( \mean{\widetilde{Q}_{\sigma/2}(Z,\tau)}  |v_g|^p \Big)^{1/p} 
\leq C \sigma^{-(n+3)/p} \sup_{g} \|N(v_g)\|_{L^p(\R^{n+1})} \\
& \leq C  \sigma^{-(n+3)/p} \sup_{g} \|g\|_{L^p(\R^{n+1})}
\leq C \sigma^{-(n+3)/p} 
< \infty.
\end{align*}
Here the supremum was taken over all $g \in C_c(\R^{n+1})$ such that $\|g\|_{L^p(\R^{n+1})} \leq 1$; $v_g$ is the solution to the Dirichlet problem with boundary data $g$ 
and in the third inequality we used \eqref{eq:Moser}.
\end{proof}

\section{Homogenization}

We divide the proof of Theorem \ref{th:homogenization} in three steps.
\subsection{Proof of \eqref{eqeps} and \eqref{nteps} for $D$}
 By making the change of variables 
$(x,t,\lambda)\mapsto(y,s,\sigma)$ given by $(x,t,\lambda)=(\eps y,\eps^2s,\eps\sigma)$, the boundary 
$$\partial D=\{(x,t,\lambda)=(x,t,\phi(x))\}$$
is transformed into 
$$\partial D_\eps := \{(y,s,\sigma)=(y,s,\phi_\eps(y))\},$$ where 
$\phi_\eps(y):=\eps^{-1}\phi(\eps y)$. 
Note that $\phi$ and $\phi_\eps$ have the same Lipschitz constant. \\

Let 
$$v_\eps(y,t,\sigma) := u_\eps(\eps y,\eps^2s,\eps\lambda) \quad 
and 
\quad 
f_\eps(y,s,\phi_\eps(y)) := f(\eps y,\eps^2s,\phi(\eps y)).$$
Then,
\begin{equation}\label{epsscale}
\left\{\begin{aligned}
 \partial_t u_\eps  +\LL_\eps u_\eps & = 0 \quad \text{in }D,\\
u_\eps &= f \quad \text{n.t. on }\partial D,
\end{aligned}\right.
\end{equation}
holds if, and only if, 
\begin{equation}\label{rescale}
\left\{\begin{aligned}
\partial_s v_\eps + \LL v_\eps & = 0 \quad \text{in }D_\eps,\\
v_\eps &=f_\eps \ \  \text{n.t. on }\partial D_\eps.
\end{aligned}\right.
\end{equation}
 By Theorem \ref{per1}, \eqref{rescale} has a unique solution that satisfies 
$$\|N(v_\eps)\|_{L^2(\partial D_\eps)}
\le C\|f_\eps\|_{L^2(\partial D_\eps)}.$$ 
Changing back to the $(x,t,\lambda)$ coordinates, we get that \eqref{epsscale} has a unique solution verifying the estimate 
$$\|N(u_\eps)\|_{L^2(\partial D)}\le C\|f\|_{L^2(\partial D)}.$$

\subsection{Proof of \eqref{eqeps} and \eqref{nteps} for $\Omega_T$}
We are going to prove that the kernel $K_\eps$ associated to the caloric measure $\omega_\eps$ for $\partial_t + \LL_\eps$ on $\partial_L\Omega_T$ satisfies the reverse H\"older inequality. \\

Let $(x_0,t_0,\lambda_0)\in \partial_L \Omega_T$. Then, after rotating the coordinates if necessary, one has by \eqref{localbdry} 
\begin{equation}\label{locallip}
\Omega_T\cap U(x_0,t_0,\lambda_0) = \{(\tilde x,\tilde t,\tilde \lambda):\tilde \lambda>\phi(\tilde x)\}\cap U(x_0,t_0,\lambda_0). 
\end{equation} 
In the new (rotated) coordinates $(\tilde x, t, \tilde\lambda)$, $\tilde u(\tilde x,t,\tilde\lambda)=u(x,t,\lambda)$ solves a parabolic equation of the same type, 
$$\partial_t \tilde u -\text{div}(\tilde A\nabla \tilde u)=0,$$
but in general $\tilde A$ will not be periodic in $\tilde \lambda$.\\ 

Suppose that the representation of $(\tilde x,\tilde \lambda)$ in the original coordinates is given by $\tilde\lambda = l\nu$, $\nu\in\R^{n+1},\;|\nu|=1$, and $\tilde x=\hat x$ in the $(x,\lambda)$ coordinates. Then, $\tilde A$ has period $\tilde\lambda_0=l_0\nu$ in $\tilde\lambda$ if and only if 
\begin{equation}\label{periodicity}
A(\hat x+(l+l_0)\nu) = \tilde A(\tilde x,\tilde\lambda+\tilde\lambda_0) = \tilde A(\tilde x,\tilde\lambda) = A(\hat x+l\nu). 
\end{equation}
From the periodicity of $A$ we see that \eqref{periodicity} holds if and only if $l_0\nu\in \Z^{n+1}$. Since $|\nu|=1$ this is equivalent to 
\begin{equation}\label{normal}
\nu=\frac{\nu_0}{|\nu_0|}, \quad \nu_0\in\Z^{n+1}\setminus\{0\}.
\end{equation}
However, since $\phi$ is Lipschitz, there is room to rotate the coordinates further to obtain \eqref{normal}, while maintaining the representation \eqref{locallip}. Thus we may assume that $\tilde A$ is periodic in $\tilde\lambda$. \\

We extend $\phi$ to $\R^n$, preserving its Lipschitz norm, and let $D=\{(x,t,\lambda):\lambda >\phi(x)\}$. Denote by $K^D_\eps$ the kernel associated to $D$, with respect to $(z,\tau,l)\in\Omega_T\cap U_{(x_0,t_0,\lambda_0)}$ such that $\tau-t_0\ge 4r^2$ and $|(z,l)-(x_0,\lambda_0)|^2\le \tau-t_0$. 
From the first part of the proof we know that the Dirichlet problem for $\partial_t+\LL_\eps$ in $D$ is solvable in $L^2$. Thus $K^D_\eps$ satisfies the reverse H\"older inequality, by Lemma \ref{RHsolv}.\\

Let $K^{\Omega_T}_\eps$ be the kernel associated to $\Omega_T$, with respect to $(z,\tau,l)$. We need to show that 
\[
\left(\frac{1}{r^{n+2}}\int_{\Delta_r}|K_\eps^{\Omega_T}|^2d\sigma\right)^{1/2}\le\frac{C}{r^{n+2}}\int_{\Delta_r}|K_\eps^{\Omega_T}|d\sigma,
\]
for any 
\begin{equation*}
\Delta_r := \Delta_r(x_0,t_0,\lambda_0), \quad r<r_0. 
\end{equation*}
We recall that the measure $\sigma$ was defined in \eqref{measure}. 
Let $G^{\Omega_T}_\eps$ be Green's function for $\Omega_T$ and let $G^D_\eps$ be Green's function for $D$. We denote by $G^{*\Omega_T}_\eps$ and $G^{*D}_\eps$ the corresponding adjoint Green's functions with pole at $(z,\tau,l)$. 

If $\Delta_\lambda(\hat x,\hat t,\hat \lambda)\subset\Delta_r$ and $\lambda>0$ is small enough, 
\begin{align*}
\frac{\omega_\eps^{\Omega_T}(\Delta_\lambda)}{\lambda^{n+2}}&\le C\frac{G^{*\Omega_T}_\eps(\hat x,\hat t- 4\lambda^2,\hat \lambda+2\lambda)}{\lambda}\\
&=C\frac{G_\eps^{*D}(\hat x,\hat t- 4\lambda^2,\hat \lambda+2\lambda)}{\lambda}\frac{G^{*\Omega_T}_\eps(\hat x,\hat t-4\lambda^2,\hat \lambda+2\lambda)}{G^{*D}_\eps(\hat x,\hat t- 4\lambda^2,\hat \lambda+2\lambda)}\\
&\le C\frac{G_\eps^{*D}(\hat x,\hat t- 4\lambda^2,\hat \lambda+2\lambda)}{\lambda}\frac{G^{*\Omega_T}_\eps(x_0, t_0-2r^2, \lambda_0+r)}{G^{*D}_\eps(x_0,t_0+2r^2,\lambda_0+r)}\\
&\le  C\frac{\omega^{D}_\eps(\Delta_\lambda(\hat x,\hat t-8\lambda^2,\phi(\hat x,\hat t-8\lambda^2)))}{\lambda^{n+2}}
\frac{G^{*\Omega_T}_\eps(x_0,t_0-2r^2,\lambda_0+r)}{G^{*D}_\eps(x_0,t_0+2r^2,\lambda_0+r)},
\end{align*}
where we used \eqref{Gomega} and \eqref{comp}. 
Taking $\lambda\to0$, it follows that 
\[
K_\eps^{\Omega_T}\le CK_\eps^D\frac{G^{*\Omega_T}_\eps(x_0,t_0-2r^2,\lambda_0+r)}{G^{*D}_\eps(x_0,t_0+2r^2,\lambda_0+r)}. 
\]
Since $K_\eps^D$ satisfies the reverse H\"older inequality,
\begin{align*}
\left(\frac{1}{r^{n+2}}\int_{\Delta_r}|K_\eps^{\Omega_T}|^2d\sigma\right)^{1/2}&\le C\frac{G^{*\Omega_T}_\eps(x_0,t_0-2r^2,\lambda_0+r)}{G^{*D}_\eps(x_0,t_0+2r^2,\lambda_0+r)}
\left(\frac{1}{r^{n+2}}\int_{\Delta_r}|K_\eps^D|^2d\sigma\right)^{1/2}\\
&  \le C\frac{\omega_\eps^{*D}(\Delta_r)}{r^{n+2}}\frac{G^{*\Omega_T}_\eps(x_0,t_0-2r^2,\lambda_0+r)}{G^{*D}_\eps(x_0,t_0+2r^2,\lambda_0+r)}\\
&\le C\frac{G_\eps^{*D}(x_0,t_0-2r^2,\lambda_0+r)}{r}\frac{G^{*\Omega_T}_\eps(x_0,t_0+2r^2,\lambda_0+r)}{G^{*D}_\eps(x_0,t_0+2r^2,\lambda_0+r)}. 
\end{align*}
Using Corollary 2.3. in \cite{FGS}, we see that 
\[
\frac{G_\eps^{*D}(x_0,t_0-2r^2,\lambda_0+r)}{G^{*D}_\eps(x_0,t_0+2r^2,\lambda_0+r)}\le C.
\]
Whence, using Lemma \ref{Gomega} and the doubling property  \eqref{doubling}, we obtain
\begin{align*}
\left(\frac{1}{r^{n+2}}\int_{\Delta_r}|K_\eps^{\Omega_T}|^2d\sigma\right)^{1/2}& \le C\frac{\omega^{\Omega_T}(\Delta_r(x_0,t_0-4r^2,\phi(x_0,t_0-4r^2)))}{r^{n+2}} \\
&\le C\frac{\omega^{\Omega_T}(\Delta_{6r}(x_0,t_0,\lambda_0))}{r^{n+2}}\le C\frac{\omega^{\Omega_T}(\Delta_{6r}(x_0,t_0,\lambda_0))}{r^{n+2}}\\
&\le \frac{1}{r^{n+2}}\int_{\Delta_r}K_\eps^{\Omega_T}d\sigma.
\end{align*}
 Thus $K_\eps^{\Omega_T}$ satisfies the reverse H\"older inequality, which proves \eqref{eqeps} and \eqref{nteps} for $\Omega_T$, using once again Lemma \ref{RHsolv}.

\quad
\subsection{Proof of \eqref{homogeneous}}
We now turn to the homogenization result.
Since the domain $\Omega_T$ is bounded, the $L^p$ norm of $u_\eps$ in $\Omega_T$ can be estimated by the $L^p$ norm of its non tangential maximal function: 
\[
\|u_\eps\|_{L^p(\Omega_T)}\le C(\text{diam}(\Omega_T))\|N(u_\eps)\|_{L^p(\partial_L\Omega_T)}\le C(\text{diam}(\Omega_T))\|f\|_{L^p(\partial_L\Omega_T)}.
\]
Let $\widetilde Q_r$ be a parabolic cube in $\R^{n+2}$ of size $r$ such that $\text{dist}(\widetilde Q_r,\partial D)\ge r$. From the De-Giorgi-Moser-Nash estimate \eqref{eq:Moser}, it follows that 
\begin{equation}\label{supueps}\sup_{\widetilde Q_{r/2}}|u_\eps|\le \left(\frac{C}{r^{n+3}}\int_{\widetilde Q_r}|u_\eps|^pdxdtd\lambda\right)^{\frac1p}\le \frac{C(\text{diam}(\Omega_T))}{r^{\frac{n+3}{p}}}\|f\|_{L^p(\partial_L \Omega_T)}.\end{equation} 
Thus $u_\eps$ is uniformly bounded with respect to $\eps$ in $L^2(K)$ for any compact subset $K\subset \Omega_T$. By Caccioppoli's inequality, 
$\|\nabla u_\eps\|_{L^2(K)}$ is uniformly bounded in $\eps$. 
Let 
\[B_R(X_0) = \{X\in\R^{n+1}:|X-X_0|<r\}\] 
and let $H^1(B_r(X_0))$ be the Sobolev space defined through the norm 
\[\|v\|_{H^1(B_R(X_0))} = \|v\|_{L^2(B_R(X_0))} + \|\nabla v\|_{L^2(B_R(X_0))}, 
\]
and let $(H^1(B_R(X_0)))^*$ be its dual space. Choose $X_0$ and $t_1<t_2$ such that 
$B_R(X_0)\times (t_1,t_0)$ is compactly contained in $\Omega_T$. From the equation $$\partial_t u_\eps + \LL_\eps u_\eps=0,$$ we see that $\partial_t u_\eps$ is uniformly bounded in $L^2((t_0,t_1);(H^1(B_R(X_0)))^*)$. 

It follows from standard results in  homogenization theory (see \cite[Ch. 11]{CD}) that $\{u_\eps\}_{\eps>0}$ has a subsequence that converges weakly with respect to the norm 
\[
\|u\|_{\mathcal{W}(B_R(X_0)\times(t_1,t_2))} := \|u\|_{L^2(B_R(X_0)\times(t_1,t_2))} + \|\nabla u\|_{L^2(B_R(X_0)\times(t_1,t_2))} + \left\| \partial_t u \right\|_{L^2((t_0,t_1);(H^1(B_R(X_0)))^*)},
\]
to a function $\bar u$ which satisfies $\partial_t \bar u + \bar\LL\bar u=0$ in $B_R(X_0)\times(t_1,t_2)$. 

We shall also need to extract a convergent subsequence of the Kernel $K_\eps$. 
If \begin{equation}\label{xtlambda}(x,t,\lambda)\in B_R(X_0)\times(t_1,t_2)\text{ and }\text{dist}(B_R(X_0)\times(t_1,t_2),\partial\Omega_T)\ge 2r,\end{equation} we get as in \eqref{supueps}, 
\[
\left|\int_{\partial_L\Omega_T} K_\eps(x,t,\lambda;Y,s)f(Y,s)d\sigma(Y)ds\right| = |u_\eps(x,t,\lambda)|\le \frac{C(\text{diam}(\Omega_T))}{r^{\frac{n+3}{p}}}\|f\|_{L^p(\partial_L \Omega_T)}. 
\]
It thus follows by duality that $\|K_\eps(x,t,\lambda;\cdot,\cdot)\|_{L^q(\partial_L\Omega_T)}$ is bounded uniformly in $\eps$ for $(x,t,\lambda)$ as in \eqref{xtlambda}, where $q$ is the conjugate exponent of $p$. This clearly implies that \[\|K_\eps\|_{L^q(B_R(X_0)\times(t_1,t_2)\times\partial_L\Omega_T)}\] is bounded uniformly in $\eps$. Thus, for a subsequence,
$$ K_\eps \longrightarrow \bar K, \quad \text{as } \eps \to 0, \ \text{weakly in } L^q(B_R(X_0)\times(t_1,t_2)\times\partial_L\Omega_T).$$ 
\quad

Suppose $\{u_{\eps_1}\}$ converges weakly in $\mathcal{W}(B_R(X_0)\times(t_1,t_2))$ to $\bar u$. Then, there is a subsequence $\{\eps_2\}$ of $\{\eps_1\}$ 
such that $K_\eps$ converges weakly to $\bar K$ in $L^2(B_R(X_0)\times(t_1,t_2)\times\partial_L\Omega_T)$, as $\eps\to0$, along $\{\eps_2\}$. This yields  
$$\bar u(x,t,\lambda)
= \int_{\partial_L\Omega_T}\bar K(x,t,\lambda;Y,s)f(Y,s)d\sigma(Y)ds.$$ 
Since this holds for any set of the type $B_R(X_0)\times(t_1,t_2)$ that is compactly contained in $\Omega_T$, we conclude that for a certain subsequence of $\{\eps\}_{\eps>0}$, 
$$u_\eps \longrightarrow \bar u, \quad \text{weakly in } \mathcal{W}_{\text{loc}}(\Omega_T),$$   
and 
$$K_\eps \longrightarrow \bar K , \quad \text{weakly in } L^q_{\text{loc}}(\Omega_T)\times L^q(\partial_L\Omega_T),$$ 
where 
\begin{equation*}
\left\{\begin{aligned}
&\partial_t \bar u  + \bar\LL\bar u=0\quad\text{in }\Omega_T,\\
& \int_{\partial_L\Omega_T} \bar K(x,t,\lambda;Y,s)f(Y,s)d\sigma(Y)ds.
\end{aligned}\right.
\end{equation*}
\quad\\
                                 
It remains to prove that $\bar K$ is indeed the kernel associated to $\partial_t + \bar \LL$. That is, we need to show that $\bar u=f$ n.t. on $\partial_L\Omega_T$. Assume that $f$ is smooth. Then by the De Giorgi-Moser-Nash estimate \eqref{eq:MoserBoundary}, $u_\eps$ is uniformly continuous up to the boundary, with estimates uniform in $\eps$. Thus, $u_\eps$ converges uniformly to $\bar u$ in any neighborhood $\mathcal{N}$ of the boundary, for a subsequence, and $\bar u=f$ on $\mathcal{N}\cap\partial D$. Since $\partial_t \bar u + \bar{\LL}\bar u=0$ in $\Omega_T$ we see that 
$$\bar u(x,t,\lambda) 
= \int_{\partial_L\Omega_T} \bar K(x,t,\lambda;Y,s)f(Y,s) d\sigma(Y)ds$$ 
solves the Dirichlet problem \eqref{homogeneous} when $f$ is smooth. Since smooth functions are dense in $L^2$, this proves that $\bar K$ is the kernel associated to $\partial_t+\bar\LL$. 

Finally, taking into account that all convergent subsequences have the same unique limit $\bar u$, we conclude that $u_\eps$ converges locally uniformly, and locally weakly in $\mathcal{W}(\Omega_T),$ to the solution $\bar u$ of \eqref{homogeneous}. 

\section*{Acknowledgments}

\emph{The authors would like to thank Professor Kaj Nystr\"om at Uppsala University for suggesting us to work on this problem. We are grateful to him for the many helpful discussions we had during the course of writing this paper.}


\end{document}